\theoremstyle{plain}
\newtheorem{thm}{Theorem}
\numberwithin{thm}{section}
\newtheorem{prop}[thm]{Proposition}
\theoremstyle{definition}
\newtheorem{defn}{Definition}
\numberwithin{defn}{section}
\numberwithin{ex}{section}
\numberwithin{nota}{section}
\newtheorem{remark}{Remark}
\newtheorem{remarks}[remark]{Remarks}
\numberwithin{remark}{section}
\newcommand{\yo}{\text{\usefont{U}{min}{m}{n}\symbol{'110}}}
\DeclareFontFamily{U}{min}{}
\DeclareFontShape{U}{min}{m}{n}{<-> dmjhira}{}
\newcommand{\ie}{i.e.\@\xspace}
\newcommand{\ac}{`}
\renewcommand{\epsilon}{\varepsilon}
\renewcommand{\phi}{\varphi}
\newcommand{\op}{^{\textup{op}}}
\newcommand{\Vop}{^{\textup{V}}}
\newcommand{\can}{^{\textup{can}}}
\newcommand{\cartcov}{^{\textup{cart,cov}}}
\newcommand{\Gir}{\operatorname{\textup{Gir}}}
\newcommand{\Hom}{\operatorname{Hom}}
\renewcommand{\lim}{\operatorname{lim}}
\newcommand{\colim}{\operatorname{colim}}
\newcommand{\lan}{\mathrm{Lan}}
\newcommand{\comma}[2]			
{\mbox{$(#1\!\downarrow\!#2)$}}
\newcommand{\Sheafify}{\mathrm{a}}	
\newcommand{\canst}{\mathcal{S}}	
\newcommand{\CAT}{\mathbf{CAT}}
\newcommand{\Cat}{\mathbf{Cat}}
\newcommand{\Site}{\mathbf{Site}}
\newcommand{\Topos}{\mathbf{Topos}}
\newcommand{\Set}{\mathbf{Set}}
\newcommand{\Ind}{\mathbf{Ind}}
\newcommand{\Sh}{\mathbf{Sh}}
\newcommand{\Fib}{{\mathbf{Fib}}}
\newcommand{\dcat}{\mathbb{D}}
\newcommand{\icat}{\mathbb{I}}
\newcommand{\abicat}{\mathcal{A}}
\newcommand{\bbicat}{\mathcal{B}}
\newcommand{\cbicat}{\mathcal{C}}
\newcommand{\dbicat}{\mathcal{D}}
\newcommand{\gbicat}{\mathcal{G}}
\newcommand{\ibicat}{\mathcal{I}}
\newcommand{\Etopos}{\mathscr{E}}
\newcommand{\Ftopos}{\mathscr{F}}
\newbox\xrat@below
\newbox\xrat@above
\newcommand{\xrightarrowtail}[2][]{%
	\setbox\xrat@below=\hbox{\ensuremath{\scriptstyle #1}}%
	\setbox\xrat@above=\hbox{\ensuremath{\scriptstyle #2}}%
	\pgfmathsetlengthmacro{\xrat@len}{max(\wd\xrat@below,\wd\xrat@above)+.6em}%
	\mathrel{\tikz [>->,baseline=-.75ex]
		\draw (0,0) -- node[below=-2pt] {\box\xrat@below}
		node[above=-2pt] {\box\xrat@above}
		(\xrat@len,0) ;}}
\tikzset{Rightarrow/.style={double equal sign distance,>={Implies},->},
	triple/.style={-,preaction={draw,Rightarrow}}}
 \newcommand{\biimp}
{\! \Leftrightarrow\!}
\begin{document}

\title{On morphisms of relative toposes}

\author{Léo Bartoli and Olivia Caramello}

\maketitle

\begin{abstract}
    We systematically investigate the functors between sites which induce morphisms of relative toposes. In particualar, we establish a relative version of Diaconescu's theorem, characterizing the relative geometric morphisms towards a relative sheaf topos in terms of a notion of flat (equivalently, filtered) functor relative to the base topos. 
\end{abstract}

\tableofcontents

\section{Introduction}

Many recent pioneering theories from different areas of mathematics, such as Scholze and Clausen’s condensed mathematics \cite{condensed}, Tao and Jamneshan’s topos-theoretic measure theory (cf. \cite{taorelative} and \cite{jamneshanstructure}) and Tomasic’s topos-theoretic difference algebra \cite{tomasicdifference}, have shown the interest of working over an arbitrary base topos. The usefulness of considering mathematical objects over a base topos different than the topos of sets notably lies in the possibility of `encapsulating' part of the complexity of the situation in the base topos, so that the given notions acquire a simpler or more natural expression with respect to it. In \cite{elephant}, some kind of relative topos theory has been developed by building on the notion of internal category. However, this notion is rather rigid and restrictive, since it does not encompass basic examples of what we would like to be internal sites, for instance, the canonical stack of a relative topos, with its canonical relative topology. In this light, stacks, and more generally fibrations, end up to be the right way to formalize relative topos theory. This approach has been developed in \cite{CaramelloZanfa}, which provides new foundations for relative topos theory in the language of stacks.
This theoretical framework allows us to reason ``internally'', or ``relatively to the base topos'', by using parameters coming from the base topos, and not breaking all the “pseudo” data that may arise in the mathematical practice. Moreover, we can think of stacks as generalized internal categories, and, looking at them as fibrations over the base site, equip them with Grothendieck topologies.  

The contents of the paper can be described as follows.
We first recall the basics of this theory, which provides the background for our results. Then we study, in a systematic way, the functors between sites which induce morphisms between relative toposes. In particular, we introduce relative morphisms of sites and relative flat functors. The difficulty in characterizing such functors lies on the fact that, at the site level, the structural morphisms are provided by comorphisms, but we wish the morphisms between two such structural comorphisms to be morphisms of sites, as in the “absolute” (\textit{i.e.} non-relative) setting. Hence, we cannot directly check, at the site level, the commutativity of the triangle induced at the topos level. We pursue three different approaches to address the question of whether or not the induced triangle at the topos level commutes (up to isomorphism):

\begin{itemize}
    \item First, by using relative local cofinality conditions arising from the computation of left Kan extensions, we answer this question in the greatest generality, namely for two comorphisms into some base site, with a continuous functor between them, not necessarily giving a commutative triangle, but only a “lax” one.
    \item  In the second approach, inspired by the classification theorem characterizing Giraud's classifying topos of a stack, we lift the domain of a flat functor to a site having all finite limits in order to have a simpler understanding of the flatness condition at that level. As in the absolute case, we have a notion of relative canonical site and a functor playing the role of the Yoneda functor. In this context, we obtain conditions which are the relative analogues for a functor to be filtering. Remarkably, in the light of the stack property of canonical relative sites, we can even get rid of the \textit{absolute} local filteredness condition.
    \item The last method exploits the fact that, in the same setting, we can turn the morphism of sites which we study into a Morita-equivalent comorphism of sites by the use of the abstract duality of \cite{denseness}, so that we only need to examine one type of functors, the comorphisms of sites, and check the commutativity of the morphisms between toposes directly at the site level: we implement this strategy by introducing a \textit{diagonal} site making the given diagram commute at the site, and investigate under which conditions we can restrict to this diagonal site without losing data at the topos level. 
\end{itemize}

These approaches eventually culminate into the main result of this paper (Theorem \ref{diaconescufibration}), which is a relative version of Diaconescu's theorem, providing an equivalence between relative geometric morphism towards a topos of sheaves on a relative site and a category of \ac \ac relatively locally filtering functors''.

\section{Relative toposes, relative sites}\label{chap:relative_sites}

In this section, we review the fundamental concepts of relative topos theory, as formulated in \cite{CaramelloZanfa} in the language of stacks and fibrations. By a \emph{relative topos} over $\Etopos$ we mean a geometric morphism $f: \Ftopos \to \Etopos$. It can be understood as `` $\Ftopos$ is a topos in the world of $\Etopos$ ''. In order to dispose of a notion of relative site as well, we have to also consider ``sites in the world of the base topos'': their underlying categories will be stacks, or, more generally, fibrations, over a site of definition for $\Etopos$. As reminded below, in order to define the associated relative toposes, the domain of these fibrations will be equipped with a minimal topology with respect to that of the base site, relativizing the case of the trivial topology giving the usual presheaves categories. Thus, a relative site on $({\cal C},J)$ will be a fibration over $\cal C$ equipped with a topology that contains Giraud's one. These relative sites $p: ({\cal G}({\mathbb D}),K) \to ({\cal C},J)$ induce relative toposes, as the Giraud topology is, by definition, the minimal one making the projection $p$ into a comorphism of sites. In the other direction, taking the \textit{canonical relative site} of a geometric morphism will allow us to reconstruct it up to isomorphism: every relative topos appears as a topos of sheaves on a relative site. Lastly, we exhibit a relative analogue of the canonical functor from an ordinary site to the associated topos of sheaves on it; this will be important for our study of morphisms of sites in the relative setting carried out in the subsequent sections of the paper.

\subsection{Relative sites}

For a $\cal C$-indexed category $\mathbb D$ together with a Grothendieck topology $J$ on $\cal C$, recall that $\Gir_J(\dcat)$ is the topos of sheaves on $({\mathcal{G}}({\mathbb D}),J_{\mathbb D})$ where $J_{\mathbb D}$ is the Giraud topology, having for covering those containing a set of cartesian arrows sent to a covering in $({\cal C},J)$ by $p_{\mathbb D}$. As observed in  \cite{CaramelloZanfa} Remark 5.4.1 (ii), the equivalence
\[
\Gir_J(\dcat) \simeq \Ind_\cbicat(\dcat\Vop, \canst_{(\cbicat,J)})
\]
of Corollary 5.4.1 shows that, given a small-generated site $({\cal C}, J)$ and a $\cal C$-indexed category ${\mathbb D}$, the relative topos $\Gir_J(\dcat)$ yields the appropriate notion of ``topos of $\Sh({\cal C}, J)$-valued presheaves on $\mathbb D$''. This motivates the following definition:

\begin{defn}[Definition 8.2.1 \cite{CaramelloZanfa}]
Let $({\cal C}, J)$ be a small-generated site. A \emph{relative site}\index{site!relative -} over $({\cal C}, J)$ is a site of the form $({\cal G}({\mathbb D}), K)$, where ${\mathbb D}$ is a $\cal C$-indexed category and $K$ is a Grothendieck topology on ${\cal G}({\mathbb D})$ containing the Giraud topology $J_{\mathbb D}$. Any relative site $({\cal G}({\mathbb D}), K)$ is endowed with its structure comorphism of sites $p_{\mathbb D}:({\cal G}({\mathbb D}), K) \to ({\cal C}, J)$.
\end{defn}

\begin{remark}
	In the interest of maximal generality, we do not require $\mathbb D$ to be a stack on $({\cal C}, J)$, nor to be the $\cal C$-indexing of an internal category in $\Sh({\cal C}, J)$. In fact, indexed categories simultaneously generalize stacks and internal categories, as not every stack is an internal category. Still, as we shall see, relative toposes can always be represented as toposes of sheaves on relative sites whose underlying indexed category is a stack (cf. Theorem \ref{thm:characterizationrelativetoposes}). 
\end{remark}

Recall that a comorphism of sites $p: ({\cal D}, K) \to ({\cal C},J)$ is a functor such that $C_p^* = a_K(- \circ p^{op})i_J$ is the inverse image part of a geometric morphism. Or, equivalently, they are such that for every covering sieve $S$ on some $p(d)$ in $\cal C$, there exists a covering sieve $S'$ on $d$ such that $p(S') \subseteq S$. 

\begin{thm}[Theorem 3.13 \cite{denseness}]
The projection of a fibration $p_{\mathbb D}: ({\cal G}({\mathbb D}),K) \to ({\cal C},J)$  is a comorphism of sites if and only if $K$ contains $J_{\mathbb D}$. Hence, any relative site $$p_{{\mathbb D}}:({\cal G}({\mathbb D}), K)\to ({\cal C}, J)$$ induces the relative topos
	\[
	C_{p_{\mathbb D}}:\Sh({\cal G}({\mathbb D}), J')\to \Sh({\cal C}, J)
	\]
    with the inverse image of the structure morphism given by:
    $$C_{p_{\mathbb D}}^* = a_K(-\circ p_{\mathbb D}^{op})i_J$$.
\end{thm}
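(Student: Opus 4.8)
The plan is to read off the equivalence directly from the two definitions, the real work being done by the cartesian lifts that the fibration $p_{\mathbb{D}}$ supplies; the ``Hence'' part will then drop out of the characterization of comorphisms of sites recalled just above.

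First I would treat the direction in which we assume $K \supseteq J_{\mathbb{D}}$ and must verify that $p_{\mathbb{D}}$ is a comorphism of sites. Take an object $d$ of $\mathcal{G}(\mathbb{D})$, set $c := p_{\mathbb{D}}(d)$, and fix a $J$-covering sieve $S$ on $c$. Since $(\mathcal{C},J)$ is small-generated, choose a small family $\{f_i : c_i \to c\}_{i\in I} \subseteq S$ generating a $J$-covering sieve contained in $S$, pick for each $i$ a cartesian arrow $\tilde f_i : e_i \to d$ lying over $f_i$, and let $S'$ be the sieve on $d$ generated by the $\tilde f_i$. Then $S'$ contains the set $\{\tilde f_i\}_{i\in I}$ of cartesian arrows whose $p_{\mathbb{D}}$-images generate a $J$-cover, so $S' \in J_{\mathbb{D}} \subseteq K$; and every arrow of $S'$ factors as $\tilde f_i \circ g$, hence is sent by $p_{\mathbb{D}}$ to $f_i \circ p_{\mathbb{D}}(g) \in S$. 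Thus $S'$ is a $K$-covering sieve on $d$ with $p_{\mathbb{D}}(S') \subseteq S$, which is exactly the comorphism condition.

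For the converse, assume $p_{\mathbb{D}} : (\mathcal{G}(\mathbb{D}), K) \to (\mathcal{C}, J)$ is a comorphism of sites and let $T$ be a $J_{\mathbb{D}}$-covering sieve on $d$; I must show $T \in K$. By definition of the Giraud topology, $T$ contains a set $\{\tilde f_i : e_i \to d\}_{i\in I}$ of cartesian arrows such that the sieve $S$ on $c := p_{\mathbb{D}}(d)$ generated by the $f_i := p_{\mathbb{D}}(\tilde f_i)$ is $J$-covering. Applying the comorphism condition to $S$ produces a $K$-covering sieve $S'$ on $d$ with $p_{\mathbb{D}}(S') \subseteq S$. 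The key step — the one I expect to carry the weight of the proof — is to show $S' \subseteq T$: for $g : d' \to d$ in $S'$ we have $p_{\mathbb{D}}(g) \in S$, so $p_{\mathbb{D}}(g) = f_i \circ h$ for some $i$ and some $h : p_{\mathbb{D}}(d') \to c_i$; since $\tilde f_i$ is cartesian over $f_i$ and $g$ lies over $f_i \circ h$, there is $\bar g : d' \to e_i$ over $h$ with $\tilde f_i \circ \bar g = g$, so $g$ factors through $\tilde f_i \in T$ and hence $g \in T$. Since $S' \in K$, $S' \subseteq T$, and a sieve containing a $K$-covering sieve is $K$-covering, we conclude $T \in K$.

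Putting the two directions together gives the equivalence; in particular, for any relative site $(\mathcal{G}(\mathbb{D}), K)$ the topology $K$ contains $J_{\mathbb{D}}$ by definition, so $p_{\mathbb{D}}$ is a comorphism of sites, and the final assertion — that it induces $C_{p_{\mathbb{D}}} : \Sh(\mathcal{G}(\mathbb{D}), K) \to \Sh(\mathcal{C}, J)$ with $C_{p_{\mathbb{D}}}^* = a_K(-\circ p_{\mathbb{D}}\op) i_J$ — is then immediate from the definition of comorphism of sites applied to $p = p_{\mathbb{D}}$ (using here that $\mathcal{G}(\mathbb{D})$ is small-generated, so that the sheaf toposes and the sheafification $a_K$ make sense). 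I do not anticipate serious obstacles: both directions are formal once the definitions are unwound, and the only genuinely substantive move is the cartesian-factorization argument in the converse, which is the concrete incarnation of the minimality of the Giraud topology among the topologies on $\mathcal{G}(\mathbb{D})$ that make $p_{\mathbb{D}}$ a comorphism of sites.
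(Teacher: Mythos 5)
Your proof is correct. Note that the paper itself offers no proof of this statement — it is recalled as Theorem 3.13 of the cited reference — so there is no internal argument to compare against; what you give is the standard (and surely the intended) argument. Both directions check out: when $K\supseteq J_{\mathbb D}$, the sieve generated by cartesian lifts of a (small) covering family contained in $S$ is $J_{\mathbb D}$-, hence $K$-covering and is sent into $S$, giving the cover-lifting characterization of a comorphism; conversely, the cartesian factorization of any $g\in S'$ through some $\tilde f_i$ is exactly what shows $S'\subseteq T$, so every $J_{\mathbb D}$-covering sieve is $K$-covering. Your handling of the size issue (extracting a small covering subfamily using small generation) is appropriate given that the Giraud topology is defined via a \emph{set} of cartesian arrows, and the ``Hence'' part does indeed follow immediately from the definition of comorphism of sites recalled just before the statement, which already identifies $C_{p_{\mathbb D}}^*$ with $a_K(-\circ p_{\mathbb D}^{\mathrm{op}})i_J$.
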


Trivial relative sites are those such that the Grothendieck topology $K$ coincides with Giraud's topology $J_{\mathbb D}$; as in the classical setting, they yield the relative presheaf toposes. Accordingly, arbitrary relative sites yield arbitrary subtoposes of relative presheaf toposes. 

\begin{remarks}

\begin{enumerate}[(a)]
    \item There are some size issues involved in the notion of relative site. We do not require smallness hypotheses in our definition as, for technical reasons, it is convenient to be able to work also with large presentation sites. One can resolve such issues either by working with respect to a bigger Grothendieck universe, or by showing that the relevant sites under consideration are in fact small-generated (for instance, one can show that the canonical site of a geometric morphism, in the sense of Definition \ref{defrelativesiteofamorphism}, is small-generated, see Proposition \ref{prop:relativesitesmallgenerated} below). 
    \item It might seem restrictive at first sight to require the structure morphism in our definition of a relative site to be a fibration. In fact, we will consider what we call \textit{sites over} $({\cal C},J)$ as those comorphisms of sites $p: ({\cal D},K) \to ({\cal C},J)$. Whenever possible, we shall formulate our results at this higher level of generality. 
    \item Still, this makes no difference for the associated geometric morphisms, since with any site \textit{over} $({\cal C}, J)$ , we can associate a \textit{relative} one, namely the fibration of generalized elements of $p$, endowed with the topology coinduced by the canonical functor ${{\cal D}}\to (1_{\cal C}\downarrow p)$, which induces the same geometric morphism (cf. Theorem 3.24 \cite{denseness}). Or, at a more abstract level, we can also turn arbitrary sites \textit{over} $({\cal C},J)$ into \textit{relative} ones, with the help of the canonical relative site of the relative topos they induce. This construction will be a key tool in our study of morphisms of sites in a relative setting.
\end{enumerate}
\end{remarks}

\subsection{The canonical relative site of a relative topos}

This subsection is about the converse result, finishing to expose that we have the same kind of correspondence between relative sites and relative toposes, as between sites and toposes in the non relative setting:

\begin{thm}\label{thm:characterizationrelativetoposes}
	Let $({\cal C}, J)$ be a small-generated site. Every relative topos $f: \Etopos\to \Sh({\cal C}, J)$ is of the form $C_{p_{\mathbb D}}$ for some relative site $p_{{\mathbb D}}:({\cal G}({\mathbb D}), K)\to ({\cal C}, J)$ (for instance, one can take $p_{{\mathbb D}}$ to be the canonical relative site of $f$, in the sense of Definition \ref{defrelativesiteofamorphism} - see Theorem \ref{thm:relativesitegeometricmorphism} below).

\end{thm}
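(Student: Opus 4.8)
The plan is to reduce the statement to the construction of a suitable relative site from the geometric morphism $f \colon \Etopos \to \Sh(\cal C, J)$, and to verify that the associated structure geometric morphism $C_{p_{\mathbb D}}$ recovers $f$ up to equivalence. The natural candidate is the \emph{canonical relative site} of $f$, whose underlying $\cal C$-indexed category $\mathbb D$ should be, fibre-wise over an object $c$ of $\cal C$, (a small-generating subcategory of) the topos $\Etopos_{/f^*\ell(c)}$, where $\ell \colon \cal C \to \Sh(\cal C,J)$ is the canonical functor; the reindexing along $u \colon c' \to c$ is given by pullback along $f^*\ell(u)$. Equipped with the canonical (relative) topology $K$ --- the restriction to $\cal G(\mathbb D)$ of the canonical topology on $\Etopos$ viewed appropriately, which one checks contains the Giraud topology $J_{\mathbb D}$ --- this yields a relative site $p_{\mathbb D} \colon (\cal G(\mathbb D), K) \to (\cal C, J)$ in the sense of the Definition above.

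First I would make precise the definition of this canonical relative site (this is the content of Definition \ref{defrelativesiteofamorphism}, referenced but not yet stated in the excerpt), taking care of the size issues flagged in Remark (a): one shows, using that $\Etopos$ has a small generating set and that $f^*$ preserves colimits, that the fibres admit small generating subcategories, so that $\mathbb D$ can be taken small-generated (this is Proposition \ref{prop:relativesitesmallgenerated}). Second, I would verify the Giraud-containment $K \supseteq J_{\mathbb D}$, so that the previous theorem (Theorem 3.13 of \cite{denseness}) applies and produces the relative topos $C_{p_{\mathbb D}} \colon \Sh(\cal G(\mathbb D), K) \to \Sh(\cal C, J)$. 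Third --- the crux --- I would exhibit an equivalence $\Sh(\cal G(\mathbb D), K) \simeq \Etopos$ commuting with the structure morphisms, i.e. compatible with $C_{p_{\mathbb D}}$ and $f$. Here the strategy is the relative analogue of the classical comparison lemma / Giraud-type argument: the canonical functor $\cal G(\mathbb D) \to \Etopos$ (sending an object in the fibre over $c$ to the corresponding object of $\Etopos_{/f^*\ell(c)}$ pushed forward to $\Etopos$) is shown to be $K$-dense and $K$-full-and-faithful in the appropriate sense, so it induces an equivalence of toposes; the compatibility with the base is built into the construction since reindexing was defined via $f^*$. This is essentially Theorem \ref{thm:relativesitegeometricmorphism}, which the excerpt explicitly allows us to cite.

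The main obstacle I anticipate is the third step: controlling the interaction between the Giraud topology $J_{\mathbb D}$, the canonical topology $K$ on $\cal G(\mathbb D)$, and the canonical topology on $\Etopos$, so as to identify $\Sh(\cal G(\mathbb D), K)$ with $\Etopos$ \emph{over} $\Sh(\cal C, J)$. Concretely, one must check that a family in a fibre is $K$-covering precisely when its image generates the corresponding object of $\Etopos_{/f^*\ell(c)}$ as an epimorphic family, and that this is compatible across fibres via the cartesian arrows --- this is where the fibration (stack) structure is genuinely used, and where the relative comparison lemma has to be invoked in its correct form. Once that identification is in place, the commutation $C_{p_{\mathbb D}} \simeq f$ follows by unwinding the definitions of the inverse images, both being (a sheafification of) restriction along the relevant functors, which agree by construction. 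The remaining verifications --- smallness, functoriality of reindexing, the Giraud-containment --- are routine given the machinery of \cite{CaramelloZanfa} and \cite{denseness} recalled above.
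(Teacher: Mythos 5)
Your proposal follows the paper's own route: the paper offers no separate argument for this theorem, simply taking the canonical relative site of $f$ (the comma fibration whose fibre over $c$ is $\Etopos/f^*\ell_J(c)$, equipped with the topology whose covers are the families that are jointly epimorphic on their first components, which contains the Giraud topology since the projection is a comorphism of sites) and citing Theorem \ref{thm:relativesitegeometricmorphism} (i.e.\ Theorem 3.16 of \cite{denseness}) for the equivalence over the base, with Proposition \ref{prop:relativesitesmallgenerated} handling the size issues. Your plan assembles exactly these ingredients --- construction of the canonical relative site, Giraud containment, denseness/comparison argument giving the equivalence over $\Sh({\cal C},J)$, smallness --- so it is correct and essentially identical to the paper's treatment.
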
\qed

Let $f:\Ftopos\rightarrow\Etopos$ be a geometric morphism. As observed in Example 2.1.1. (ii) in \cite{CaramelloZanfa}, there is an $\Etopos$-indexed category $\icat_{f}$ associated with $f$, defined on objects by mapping $E$ of $\Etopos$ to the slice topos $\Ftopos/f^*(E)$, with its transition morphisms being the obvious pullback functors. We can perform this more in general: consider two sites $(\cbicat,J)$ and $(\dbicat,K)$ and a $(J,K)$-continuous functors $A:\cbicat\rightarrow \dbicat$: then we can consider the $\cbicat$-indexed category $\icat_A$\index{$\icat_A$} defined by mapping any $c$ in $\cbicat$ to the slice topos $\Sh(\dbicat,K)/\ell_K(A(c))$, and whose transition morphisms are the obvious pullback functors. Notice that the fibration $\icat_f$ defined from the geometric morphism $f$ is now a particular instance of this, where we take as continuous functors the morphism of sites $f^*:(\Etopos, J\can_\Etopos)\rightarrow (\Ftopos, J\can_\Ftopos)$. Every fibration of this form is in fact a stack:
\begin{prop}[Theorem 8.2.2 \cite{CaramelloZanfa}]
Let $A:(\cbicat,J)\rightarrow (\dbicat,K)$ be a $(J,K)$-continuous functor: then the fibration $\icat_A$ defined above is a $J$-stack. In particular, for every geometric morphism $f:\Ftopos\rightarrow\Etopos$ the $\Etopos$-indexed category $\icat_{f}$ of Example 2.1.1. \cite{CaramelloZanfa} is a $J\can_\Etopos$-stack.
\end{prop}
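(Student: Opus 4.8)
The plan is to realise $\icat_A$ as the reindexing of the canonical self-indexing of a topos along a $(J,K)$-continuous functor, and to reduce the claim to two ingredients: \textbf{(a)} the self-indexing of a Grothendieck topos is a stack for its canonical topology, and \textbf{(b)} reindexing a stack along a continuous functor of sites yields a stack. Concretely, set $\Gtopos := \Sh(\dbicat, K)$ and let $\mathbb{U}_{\Gtopos}$ be the $\Gtopos$-indexed category $Y \mapsto \Gtopos/Y$ with its pullback transition functors, i.e.\ the canonical self-indexing (fundamental fibration) of $\Gtopos$. By the very definition of $\icat_A$ we have $\icat_A(c) = \Gtopos/\ell_K(A(c))$, with transition functor along $g$ given by pullback along $\ell_K(A(g))$; hence $\icat_A = H^{*}\mathbb{U}_{\Gtopos}$, the reindexing of $\mathbb{U}_{\Gtopos}$ along $H := \ell_K \circ A \colon \cbicat \to \Gtopos$. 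Since $\ell_K \colon (\dbicat, K) \to (\Gtopos, J\can_{\Gtopos})$ is continuous and $A$ is $(J,K)$-continuous by hypothesis, the composite $H$ is $(J, J\can_{\Gtopos})$-continuous, and it suffices to establish \textbf{(a)} and \textbf{(b)}.

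Ingredient \textbf{(a)} is classical and I would simply cite it (e.g.\ \cite{elephant}): a covering sieve for the canonical topology on $\Gtopos$ is generated by an epimorphic family, and every epimorphic family in a Grothendieck topos is of effective descent — a standard consequence of Giraud's exactness axioms (universality of colimits, disjointness of coproducts, effectivity of equivalence relations) — so $\Gtopos/Y \isorightarrow \mathrm{Desc}(S, \mathbb{U}_{\Gtopos})$ for every canonical covering sieve $S$ on $Y$, which is exactly the $J\can_{\Gtopos}$-stack condition.

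For ingredient \textbf{(b)} I would use the standard reformulation of continuity: a functor $H \colon (\cbicat, J) \to (\Gtopos, L)$ is $(J,L)$-continuous precisely when, for every $J$-covering sieve $R$ on every object $c$, the canonical comparison $\lan_{H\op}(R) \to y_{\Gtopos}(Hc)$ — from the left Kan extension along $H\op$ of the subobject $R \hookrightarrow y_{\cbicat}(c)$ to the representable on $Hc$ — is $L$-bidense, i.e.\ inverted by $L$-sheafification (this is just the translation of ``$H^{*}$ preserves sheaves'' through the adjunction $\lan_{H\op} \dashv H^{*}$). Granting this, the rest is formal, working in a fixed model of indexed categories: the adjunction $\lan_{H\op} \dashv H^{*}$ extends to $\mathbf{Cat}$-valued presheaves and yields a natural equivalence $\mathrm{Desc}(R, H^{*}\mathbb{U}_{\Gtopos}) \simeq \Hom(\lan_{H\op}(R), \mathbb{U}_{\Gtopos})$; since $\mathbb{U}_{\Gtopos}$ is an $L$-stack it is local with respect to $L$-bidense maps, so the comparison above induces an equivalence $\Hom(\lan_{H\op}(R), \mathbb{U}_{\Gtopos}) \simeq \Hom(y_{\Gtopos}(Hc), \mathbb{U}_{\Gtopos}) = \mathbb{U}_{\Gtopos}(Hc)$; and one checks, by naturality of the adjunction, that the resulting functor $(H^{*}\mathbb{U}_{\Gtopos})(c) = \mathbb{U}_{\Gtopos}(Hc) \isorightarrow \mathrm{Desc}(R, H^{*}\mathbb{U}_{\Gtopos})$ is the canonical restriction functor. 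Hence $\icat_A = H^{*}\mathbb{U}_{\Gtopos}$ is a $J$-stack. (Should one prefer a hands-on argument, descent of morphisms is immediate because the relevant hom-presheaf on $\cbicat/c$ is the restriction, along the continuous sliced functor $H/c \colon \cbicat/c \to \Gtopos/Hc$, of an $L/Hc$-sheaf on the slice topos $\Gtopos/Hc$, hence a $J$-sheaf; effectivity of descent data is then the real content, supplied again by the bidensity above.)

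The stated special case follows at once: take $\cbicat = \Etopos$ with $J = J\can_{\Etopos}$, $\dbicat = \Ftopos$ with $K = J\can_{\Ftopos}$, and $A = f^{*}$. The inverse image functor $f^{*}$ is $(J\can_{\Etopos}, J\can_{\Ftopos})$-continuous, since it preserves finite limits and all colimits, hence epimorphic families, hence canonical covers — indeed $(-)\circ (f^{*})\op$ is just $f_{*}$ on canonical sheaves — and because $\ell_{J\can_{\Ftopos}} \colon \Ftopos \to \Sh(\Ftopos, J\can_{\Ftopos})$ is an equivalence, $\icat_{f^{*}}(E) \simeq \Ftopos/f^{*}(E) = \icat_{f}(E)$, naturally in $E$; so $\icat_f$ is a $J\can_{\Etopos}$-stack. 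The only genuine subtlety in the whole argument is the $2$-categorical bookkeeping in \textbf{(b)} — that $\mathbf{Cat}$-valued sheafification inverts $L$-bidense maps and that $L$-stacks are exactly the objects local for them, in whichever strict or pseudo presentation of indexed categories one adopts — together with a clean invocation of \textbf{(a)}; everything else is routine.
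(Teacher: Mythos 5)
Your reduction is the same as the paper's: since the canonical stack $\canst_{(\dbicat,K)}$ of the site $(\dbicat,K)$ is exactly $d\mapsto \Sh(\dbicat,K)/\ell_K(d)$, your factorization $\icat_A=\mathbb{U}_{\Gtopos}\circ(\ell_K\circ A)\op$ coincides with the composite $\canst_{(\dbicat,K)}\circ A\op$ used in the paper, and your two ingredients are precisely the two facts the paper cites from \cite{CaramelloZanfa} (Theorem 2.6.8: the canonical stack of a site is a stack; Proposition 3.4.2: the direct image of fibrations along a continuous functor, i.e.\ composition with its $\op$, preserves stacks). Had you invoked the latter, your argument would simply be the paper's proof. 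The trouble lies in your self-contained justification of ingredient \textbf{(b)}.

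The step $\mathrm{Desc}(R,H^{*}\mathbb{U}_{\Gtopos})\simeq\Hom(\lan_{H\op}(R),\mathbb{U}_{\Gtopos})$ is not a formal consequence of $\lan_{H\op}\dashv H^{*}$, even ``extended to $\CAT$-valued presheaves''. The left-hand side is a pseudo-limit over $\int R$: a descent datum assigns objects only up to coherent isomorphism. The presheaf $\lan_{H\op}(R)$, by contrast, is a $1$-categorical colimit: its value at $Y$ is the \emph{set of connected components} of the category of factorizations $Y\to H(d)\to H(c)$ with $d\to c$ in $R$, so $\int\lan_{H\op}(R)$ is the fibrewise $\pi_0$-quotient of the comma fibration that is the genuine (pseudo) Kan extension of the discrete fibration $\int R$ along $H$. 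Mapping such a quotient into a fibration is not governed by the $\Set$-level adjunction: a descent datum assigns to two identified factorizations objects that are merely canonically isomorphic, via zigzags of coherence isomorphisms which need not be compatible, so it does not determine a cartesian functor on $\int\lan_{H\op}(R)$, and there is no formal reason for the canonical comparison between the two sides to be an equivalence -- establishing it is essentially equivalent to the statement being proved. To repair the argument one must work with the comma-fibration Kan extension and show that its comparison to the discrete fibration of the representable $\Gtopos(-,H(c))$ is inverted by stackification; your bidensity hypothesis gives the $\pi_0$-level (local surjectivity and injectivity) part of this, but not the fullness/faithfulness-type conditions, and supplying that is exactly the content of Proposition 3.4.2 of \cite{CaramelloZanfa}, which the paper simply cites. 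The remaining pieces of your proposal -- ingredient \textbf{(a)}, the locality of stacks with respect to bidense maps of set-valued presheaves, the continuity of $\ell_K\circ A$, and the deduction of the special case of $\icat_f$ -- are fine.
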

\begin{proof}
	Notice that $\icat_A$ corresponds to the composite pseudofunctor
	\[
	\cbicat\op\xrightarrow{A\op} \dbicat\op\xrightarrow{\canst_{(\dbicat,K)}}\CAT,
	\]
	where $\canst_{(\dbicat,K)}$ denotes the canonical stack for the site $(\dbicat,K)$ (see Definition 2.6.4. and Theorem 2.6.8. in \cite{CaramelloZanfa}).Then we can exploit the notion of direct image of fibrations, introduced in Section 3.1 of \cite{CaramelloZanfa}, and the fact that the direct image along a continuous functor maps stacks to stacks (Proposition 3.4.2.), to conclude that $\icat_A$ is a $J$-stack.
\end{proof}
For a geometric morphism $f:\Ftopos\rightarrow \Etopos$, the fibration associated to $\icat_f$ is made as follows: objects over $E$ in $\Etopos$ are arrows $[u:U\rightarrow f^*(E)]$ of $\Ftopos$, and morphisms $(e,a):[u':U'\rightarrow f^*(E')]\rightarrow [u:U\rightarrow f^*(E)]$ are indexed by two arrows $e:E'\rightarrow E$ and $a:U'\rightarrow U$ making the diagram
\[
\begin{tikzcd}
	U'\ar[d, "u'"] \ar[r, "a"] & U\ar[d,"u"]\\
	f^*(E') \ar[r, "f^*(e)"'] & f^*(E)
\end{tikzcd}\]
commutative. This fibration acts by pullback along the image of arrows coming from the basis, hence cartesian arrows of $\gbicat(\icat_f)$ are those such that the square above is a pullback square.

Notice that the fibration $\gbicat(\icat_\Ftopos)$ corresponds in fact with the comma category $\comma{1_\Ftopos}{f^*}$: we have already met this kind of category in Theorem 3.16. \cite{denseness}, where we showed that any geometric morphism induced by a morphism of sites $A:(\cbicat,J)\rightarrow (\dbicat,K)$ can be described as the geometric morphism induced by the fibration $\comma{1_\dbicat}{A}\rightarrow \cbicat$ upon endowing the domain with a suitable Grothendieck topology $\widetilde{K}$. Applying that in our specific case, we obtain the following result:

\begin{thm}\label{thm:relativesitegeometricmorphism}
	Let $f:\Ftopos\rightarrow \Etopos$ be a geometric morphism: then there exists a Grothendieck topology $J_f$\index{$J_f$} over $\gbicat(\icat_f)$ such that the two toposes $\Ftopos$ and $\Sh(\gbicat(\icat_f), J_f)$ are equivalent as $\Etopos$-toposes: more specifically, a family $\{(a_i, e_i):[u_i:U_i\rightarrow f^*(E_i)]\to[u:U\rightarrow f^*(E)]\ |\ i\in I\}$ is $J_f$-covering if and only if the family $\{a_i:U_i\rightarrow U\ |\ i\in I\}$ is epimorphic in $\Ftopos$.
	
	Thus the geometric morphism $f:\Ftopos\rightarrow\Etopos$ presents $\Ftopos$ as a \emph{topos of relative sheaves over the stack $\icat_f$}:
	\[
	\Ftopos\simeq \Sh(\gbicat(\icat_f), J_f)=:\Sh_{\Etopos}(\icat_f, J_f).
	\]
	We call $({\cal G}(\icat_f),J_f)$ the \emph{canonical relative site} of $f$, and $J_f$ its \emph{canonical relative topology}.
\end{thm}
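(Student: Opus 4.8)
The plan is to deduce the statement from Theorem 3.16 of \cite{denseness}, applied to the inverse image functor $f^*$ viewed as a morphism of canonical sites, exploiting the identification $\gbicat(\icat_f)\simeq\comma{1_\Ftopos}{f^*}$ recorded just before the statement. Write $\Etopos\simeq\Sh(\Etopos,J\can_\Etopos)$ and $\Ftopos\simeq\Sh(\Ftopos,J\can_\Ftopos)$ for the canonical equivalences sending an object $X$ to $\Hom(-,X)$. Then $f^*\colon\Etopos\to\Ftopos$ is a morphism of sites $(\Etopos,J\can_\Etopos)\to(\Ftopos,J\can_\Ftopos)$: it is flat, being a finite-limit-preserving functor out of the finitely complete category $\Etopos$, and it is cover-preserving, being a left adjoint and hence sending (jointly) epimorphic families to epimorphic families. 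Moreover, under these equivalences the geometric morphism it induces is $f$ itself \emph{as an $\Etopos$-topos}: its direct image sends $G=\Hom(-,X)$ on $\Ftopos$ to $G\circ(f^*)\op=\Hom(f^*(-),X)\cong\Hom(-,f_*X)$, that is, to $f_*X$.

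Granting this, I would apply Theorem 3.16 of \cite{denseness} to $A=f^*$. This produces a Grothendieck topology $\widetilde{J\can_\Ftopos}$ on $\comma{1_\Ftopos}{f^*}$ for which the projection $\comma{1_\Ftopos}{f^*}\to\Etopos$ is a comorphism of sites, together with an equivalence, over $\Sh(\Etopos,J\can_\Etopos)=\Etopos$, between $\Sh(\comma{1_\Ftopos}{f^*},\widetilde{J\can_\Ftopos})$ and the topos $\Sh(\Ftopos,J\can_\Ftopos)=\Ftopos$ induced by $f^*$. Transporting the topology along $\comma{1_\Ftopos}{f^*}\simeq\gbicat(\icat_f)$ we set $J_f:=\widetilde{J\can_\Ftopos}$, obtaining the asserted equivalence $\Ftopos\simeq\Sh(\gbicat(\icat_f),J_f)$ of $\Etopos$-toposes; the notation $\Sh_\Etopos(\icat_f,J_f)$ is then merely a shorthand. (The size issue caused by $(\Ftopos,J\can_\Ftopos)$ being large is handled by enlarging the universe, or better via Proposition \ref{prop:relativesitesmallgenerated}.) It then remains to make $J_f$ explicit. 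By the construction behind Theorem 3.16 of \cite{denseness}, a sieve lies in $\widetilde{J\can_\Ftopos}$ exactly when its image under the canonical functor $\comma{1_\Ftopos}{f^*}\to\Ftopos$ — sending $[u\colon U\to f^*(E)]$ to $U$ and $(a,e)$ to $a$ — generates a $J\can_\Ftopos$-covering sieve. Since $\Ftopos$ is a Grothendieck topos, $J\can_\Ftopos$ is generated by the small jointly epimorphic families; hence a sieve on $[u\colon U\to f^*(E)]$ is $J_f$-covering if and only if it contains a family $\{(a_i,e_i)\colon[u_i\colon U_i\to f^*(E_i)]\to[u\colon U\to f^*(E)]\mid i\in I\}$ with $\{a_i\colon U_i\to U\mid i\in I\}$ epimorphic in $\Ftopos$ — precisely the claimed description. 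In particular $J_f$ contains the Giraud topology $J_{\icat_f}$, since the image of a cartesian covering family is a pulled-back epimorphic family, hence epimorphic, consistently with $p_{\icat_f}$ being a comorphism of sites.

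The essentially only non-formal point, and the one I would treat with care, is this last identification: matching the abstract topology furnished by Theorem 3.16 of \cite{denseness} with the concrete epimorphic-family criterion. It rests on the standard fact that in a Grothendieck topos the canonical coverage consists of the small jointly epimorphic families, together with a routine check that passing between sieves and their generating families loses nothing. Everything else — the recollection that the canonical stack of $(\Ftopos,J\can_\Ftopos)$ is the codomain fibration of $\Ftopos$, whence $\gbicat(\icat_f)\simeq\comma{1_\Ftopos}{f^*}$, and the fact that Theorem 3.16 of \cite{denseness} outputs an equivalence over the base rather than a bare equivalence of toposes — is bookkeeping already carried out in the discussion preceding the statement.
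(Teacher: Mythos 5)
Your proposal is correct and follows essentially the same route as the paper: the paper's proof likewise applies Theorem 3.16 of \cite{denseness} to the morphism of sites $f^*:(\Etopos, J\can_\Etopos)\rightarrow (\Ftopos,J\can_\Ftopos)$, identifies $\gbicat(\icat_f)$ with $\comma{1_\Ftopos}{f^*}$, and sets $J_f:=\widetilde{J\can_\Ftopos}$. The extra details you supply (that $f^*$ is indeed a morphism of canonical sites, and the unwinding of $\widetilde{J\can_\Ftopos}$ as the epimorphic-family criterion) are consistent with how the paper uses this description elsewhere, e.g.\ in the proof of Proposition \ref{prop:relativesitesmallgenerated}.
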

\begin{proof}
	We apply Theorem 3.16. of \cite{denseness} to the morphism of sites \[f^*:(\Etopos, J\can_\Etopos)\rightarrow (\Ftopos,J\can_\Ftopos):\] the category $\comma{1_\Ftopos}{f^*}$ can be endowed with a topology $\widetilde{J\can_\Ftopos}$ such that $\pi_\Etopos:\comma{1_\Ftopos}{f^*}\rightarrow \Etopos$ is a comorphism of sites and $\pi_\Ftopos:\comma{1_\Ftopos}{f^*}\rightarrow \Ftopos$ induces an equivalence of toposes making the diagram
	\[
	\begin{tikzcd}
		\Ftopos \ar[d, "f"']  \ar[r, "\sim", no head] &{\Sh(\Ftopos,J\can_\Ftopos)} \ar[d, "\Sh(f^*)"']\ar[r, no head, "\sim"]& {\Sh(\comma{1_\Ftopos}{f^*}, \widetilde{J\can_\Ftopos})}\ar[dl, "C_{\pi_\Etopos}"]\\
		\Etopos\ar[r, "\sim", no head] &
		{\Sh(\Etopos,J\can_\Etopos)}&
	\end{tikzcd}
	\] 
	commutative. Setting $J_f:=\widetilde{J\can_\Ftopos}$ concludes the proof.
\end{proof}

Intuitively, in the comma category $\comma{1_\Ftopos}{f^*}$ representing $\icat_f$ as a fibration, the topology $J_f$ says that only the components following $\cal F$ have some ``weight'', which hints that we recover the topos $\cal F$ when we go to the category of sheaves on it; whereas the components following $\cal E$ enables data to be well organized in respect with this basis. Moreover, the projection $\pi_{\cal E}$ onto $\cal E$ has a right adjoint that sends an object $e$ of $E$ to $1_{f^*(e)}: f^*(e) \to f^*(e)$, which obviously acts as $f^*$ on the first component (the one that has a ``weight'' at the topos level). Since a left adjoint being a comorphism is equivalent to a right adjoint being a morphism of sites, and that they induce the same geometric morphism, it is not surprising that we recover the geometric morphism $f$ as induced by the comorphism $\pi_{\cal E}$, at the topos level.

We can consider, more generally, geometric morphisms induced by arbitrary morphisms of sites: 

\begin{defn}\label{defrelativesiteofamorphism}
	Let $A:(\cbicat,J)\rightarrow (\dbicat,K)$ be a morphism of small-generated sites. The \emph{relative site of $A$}\index{site! relative - of a morphism of sites} is the site $({\mathbb I}_{A}, J^{K}_{A})$ (where $J^{K}_{A}$\index{$J^{K}_{A}$} is the topology $\widetilde{K}$ of Theorem 3.16. in \cite{denseness}), together with the canonical projection functor $\pi_{A}:{\mathbb I}_{A} \to {\cal C}$, which is a comorphism of sites $({\mathbb I}_{A}, J^{K}_{A}) \to ({\cal C}, J)$.
\end{defn}

Let $A:(\cbicat,J)\rightarrow (\dbicat,K)$ be a morphism of small-generated sites. Then the site $({\mathbb I}_{A}, J^{K}_{A})$ is small-generated. In particular, the relative site of a geometric morphism is always small-generated.

Recall that by Theorem 3.16. \cite{denseness}, we have the following generalization of Theorem \ref{thm:relativesitegeometricmorphism}:

\begin{thm}
	Let $A:(\cbicat,J)\rightarrow (\dbicat,K)$ be a morphism of small-generated sites. Then the geometric morphism $\Sh(A)$ induced by $A$ coincides with the structure geometric morphism $C_{\pi_{A}}$ associated with the relative site $\pi_{A}:({\mathbb I}_{A}, J^{K}_{A})\to ({\cal C}, J)$. 
\end{thm}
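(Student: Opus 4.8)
The plan is to obtain this statement as a direct instance of Theorem 3.16 of \cite{denseness}, now applied to the morphism of sites $A$ itself rather than to the canonical comparison $f^*$ as was done in the proof of Theorem \ref{thm:relativesitegeometricmorphism}. So the first step is simply to invoke that theorem for the morphism of small-generated sites $A:(\cbicat,J)\rightarrow(\dbicat,K)$: it produces a Grothendieck topology $\widetilde{K}$ on the comma category $\comma{1_\dbicat}{A}$ such that the projection $\pi_\cbicat:\comma{1_\dbicat}{A}\to\cbicat$ is a comorphism of sites $(\comma{1_\dbicat}{A},\widetilde{K})\to(\cbicat,J)$, the other projection $\pi_\dbicat$ induces an equivalence of toposes $\Sh(\comma{1_\dbicat}{A},\widetilde{K})\simeq\Sh(\dbicat,K)$, and the triangle
\[
\begin{tikzcd}
\Sh(\dbicat,K)\ar[d,"\Sh(A)"']\ar[r,"\sim",no head] & \Sh(\comma{1_\dbicat}{A},\widetilde{K})\ar[dl,"C_{\pi_\cbicat}"]\\
\Sh(\cbicat,J) &
\end{tikzcd}
\]
commutes up to isomorphism.

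The second step is purely a matter of unwinding Definition \ref{defrelativesiteofamorphism}: by construction, the category ${\mathbb I}_A$ underlying the relative site of $A$ is the comma category $\comma{1_\dbicat}{A}$, the topology $J^K_A$ is precisely the topology $\widetilde{K}$ just produced, and the comorphism $\pi_A$ is precisely $\pi_\cbicat$. Hence $C_{\pi_A}=C_{\pi_\cbicat}$, and the commutativity of the triangle above is exactly the assertion that $\Sh(A)$ coincides, modulo the canonical equivalence $\Sh(\dbicat,K)\simeq\Sh({\mathbb I}_A,J^K_A)$ induced by $\pi_\dbicat$, with the structure geometric morphism $C_{\pi_A}$ of the relative site $\pi_A:({\mathbb I}_A,J^K_A)\to(\cbicat,J)$. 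One should also record, for the statement to make literal sense as an equality of geometric morphisms with codomain $\Sh(\cbicat,J)$, that $({\mathbb I}_A,J^K_A)$ is small-generated — but this is exactly the content of the observation preceding the theorem, which itself rests on the small-generatedness of the relative site of a geometric morphism.

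I do not expect a genuine obstacle here: all of the substantive work — constructing $\widetilde{K}$, checking that $\pi_\cbicat$ is a comorphism of sites, that $\pi_\dbicat$ induces an equivalence, and that the resulting triangle commutes — is already carried out in Theorem 3.16 of \cite{denseness}, so the present statement is really its transcription into the language of relative sites. The only point deserving a line of care is verifying that the data $({\mathbb I}_A,J^K_A,\pi_A)$ fixed in Definition \ref{defrelativesiteofamorphism} matches the output of Theorem 3.16 verbatim, which is immediate since that definition was phrased in terms of that very theorem; and, as a minor bookkeeping check, that the equivalence along which the identification $\Sh(A)\cong C_{\pi_A}$ is asserted is the one induced by $\pi_\dbicat$.
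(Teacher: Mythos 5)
Your proof is correct and coincides with the paper's treatment: the paper states this theorem without any separate proof, presenting it as a direct transcription of Theorem 3.16 of \cite{denseness} through Definition \ref{defrelativesiteofamorphism} (which fixes ${\mathbb I}_A$, $J^K_A=\widetilde{K}$ and $\pi_A$ to be exactly the output of that theorem), which is precisely your argument, including the identification of $\Sh(A)$ with $C_{\pi_A}$ along the equivalence induced by the projection to $\dbicat$. The only inaccuracy is your closing parenthetical: in the paper the small-generatedness of $({\mathbb I}_A, J^K_A)$ is established (Proposition \ref{prop:relativesitesmallgenerated}) for an arbitrary morphism of small-generated sites, and the case of the relative site of a geometric morphism is deduced from it as a particular instance, not the other way around — a harmless slip that does not affect the argument.
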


As it can be naturally expected, these canonical sites are small-generated:

\begin{prop}\label{prop:relativesitesmallgenerated}
	Let $A:(\cbicat,J)\rightarrow (\dbicat,K)$ be a morphism of small-generated sites. Then the site $({\mathbb I}_{A}, J^{K}_{A})$ is small-generated.
	
	In particular, the relative site of a geometric morphism is always small-generated.
\end{prop}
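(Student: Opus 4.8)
The plan is to produce an explicit small subcategory of $\icat_A$ which is $J^K_A$-dense; exhibiting such a subcategory is precisely what it takes for $(\icat_A, J^K_A)$ to be small-generated. Using the hypotheses, I would fix small full subcategories $\cbicat_0 \hookrightarrow \cbicat$ and $\dbicat_0 \hookrightarrow \dbicat$ that are respectively $J$-dense and $K$-dense, so that every object of $\cbicat$ (resp.\ of $\dbicat$) admits a $J$-covering (resp.\ $K$-covering) sieve generated by arrows out of objects of $\cbicat_0$ (resp.\ $\dbicat_0$). I would also use the concrete description of the relative site coming from Theorem 3.16 of \cite{denseness} (recalled in Theorem \ref{thm:relativesitegeometricmorphism}): an object of $\icat_A$ is a triple $(c, d, g \colon d \to A(c))$ with $c$ in $\cbicat$ and $d$ in $\dbicat$; a morphism $(c', d', g') \to (c, d, g)$ is a pair $(u \colon c' \to c,\ v \colon d' \to d)$ with $g \circ v = A(u) \circ g'$; the projection $\pi_A$ sends $(c,d,g)$ to $c$; and a sieve on $(c,d,g)$ is $J^K_A$-covering exactly when the $\dbicat$-components $v$ of its arrows generate a $K$-covering sieve on $d$. (Should one prefer to describe $\icat_A$ as the total category of the stack $c \mapsto \Sh(\dbicat,K)/\ell_K(A(c))$, the argument below needs only one extra preliminary step, namely to first cover the domain of an arbitrary object $X \to \ell_K(A(c))$ by representables $\ell_K(d_0)$ with $d_0$ in $\dbicat_0$, these being a separating family of $\Sh(\dbicat,K)$.)

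Next I would take $\icat_A^0$ to be the full subcategory of $\icat_A$ on the objects $(c_0, d_0, g \colon d_0 \to A(c_0))$ with $c_0$ in $\cbicat_0$ and $d_0$ in $\dbicat_0$. This category is (essentially) small: its objects are parametrised by the small set of pairs $(c_0, d_0)$ with $c_0$ in $\cbicat_0$ and $d_0$ in $\dbicat_0$, together with an element of the hom-set $\Hom_\dbicat(d_0, A(c_0))$, which is small by local smallness of $\dbicat$; and it is locally small. So it remains to prove that $\icat_A^0$ is $J^K_A$-dense.

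For this, let $(c, d, g \colon d \to A(c))$ be an arbitrary object of $\icat_A$. First I would use $J$-density of $\cbicat_0$ to choose a $J$-covering family $\{u_m \colon c_m \to c\}_m$ with every $c_m$ in $\cbicat_0$; since a morphism of sites is cover-preserving, the $A(u_m)$ then generate a $K$-covering sieve $R$ on $A(c)$, and by stability of $K$ under pullback the sieve $g^{*}R$ on $d$ — consisting of those $t \colon e \to d$ for which $g \circ t = A(u_{m}) \circ s$ for some index $m$ and some $s \colon e \to A(c_{m})$ — is $K$-covering. Picking a generating family $\{t_k \colon e_k \to d\}_k$ for $g^{*}R$, together with such factorisations $g \circ t_k = A(u_{m(k)}) \circ s_k$, and then, using $K$-density of $\dbicat_0$, $K$-covering families $\{r_{k,l} \colon d_{k,l} \to e_k\}_l$ with each $d_{k,l}$ in $\dbicat_0$, I observe that each triple $(c_{m(k)}, d_{k,l}, s_k \circ r_{k,l})$ lies in $\icat_A^0$ and that $(u_{m(k)},\ t_k \circ r_{k,l})$ defines a morphism $(c_{m(k)}, d_{k,l}, s_k \circ r_{k,l}) \to (c, d, g)$ in $\icat_A$, because $g \circ (t_k \circ r_{k,l}) = (A(u_{m(k)}) \circ s_k) \circ r_{k,l} = A(u_{m(k)}) \circ (s_k \circ r_{k,l})$. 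The $\dbicat$-components of these morphisms are the arrows $t_k \circ r_{k,l}$, and the sieve they generate on $d$ is $K$-covering: pulled back along any of the generators $t_k$ of the $K$-covering sieve $g^{*}R$, it contains the $K$-cover $\{r_{k,l}\}_l$ of $e_k$, so one concludes by pullback-stability and transitivity of $K$. Hence the sieve on $(c,d,g)$ generated by the morphisms out of $\icat_A^0$ just constructed is $J^K_A$-covering, which is the required density statement.

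This establishes the Proposition. For the final assertion, I would note that any Grothendieck topos equipped with its canonical topology is a small-generated site — a small separating set is a $J\can$-dense small subcategory, since each object is the colimit of the small canonical diagram formed by the arrows into it out of that set, hence admits a $J\can$-covering sieve generated by such arrows — and that for a geometric morphism $f \colon \Ftopos \to \Etopos$ the inverse image $f^{*} \colon (\Etopos, J\can_\Etopos) \to (\Ftopos, J\can_\Ftopos)$ is a morphism of sites whose relative site is the canonical relative site of $f$; the claim then follows from the part already proved. The proof involves no deep point: the only things requiring care are the exact form of $J^K_A$ and the verification that the arrows written down above are genuine morphisms of $\icat_A$ with the stated $\dbicat$-components — both read off directly from Theorem 3.16 of \cite{denseness} — together with keeping track of which concrete model of $\icat_A$ one is using. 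It is perhaps worth observing that flatness of $A$ is never invoked: cover-preservation alone suffices.
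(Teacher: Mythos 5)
Your proof is correct and follows essentially the same strategy as the paper's: fix small dense subcategories $\cbicat_0 \subseteq \cbicat$ and $\dbicat_0 \subseteq \dbicat$, localize the base component using cover-preservation of $A$, cover the fibre component by (representables of) objects of $\dbicat_0$, and conclude that the resulting full subcategory is small by local smallness and $J^K_A$-dense by pullback-stability and transitivity. The one point to watch is that the paper takes ${\mathbb I}_A$ to be the topos-level comma category $(1_{\Sh(\dbicat,K)} \downarrow \ell_K A)$, so the `extra preliminary step' you mention parenthetically (covering an arbitrary sheaf by representables $\ell_K(d_0)$ with $d_0$ in $\dbicat_0$) is in fact the version of the argument the paper carries out, your site-level manipulation of sieves being the routine translation of its pullback computation in $\Sh(\dbicat,K)$.
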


\begin{proof}
    Our goal is to build a small $J^K_A$-dense subcategory $\ibicat_A$ of the fibration $\icat_A:=\comma{1_{\Sh(\dbicat,K)}}{\ell_K A}$, \ie 
    such that every object of $\icat_A$ has a $J^K_A$-covering family whose domains all lie in $\ibicat_A$. Notice that a sieve is $J^K_A$-covering if and only if its image in $\Sh(\dbicat,K)$ is $J\can_{\Sh(\dbicat,K)}$-covering, \ie if and only if it contains a small epimorphic family.
    
    To do so, let us set an object $(F,X, f:F\rightarrow \ell_K(A(X)))$ of $\icat_A$.
	Let us denote by $\abicat$ the small $J$-dense subcategory of $\cbicat$ such that $\Sh(\cbicat,J)\simeq \Sh(\abicat, J_{|\abicat})$. Notice that there is a small $J$-covering family $\{y_i:Y_i\rightarrow X\ |\ i\in I\}$ such that all the domains $Y_i$ lie in $\abicat$. Since $\ell_K A$ is cover-preserving, it follows that the family $\{\ell_K(A(y_i)):\ell_K(A(Y_i))\rightarrow \ell_J(A(X))\ |\ i\in I\}$ is $J\can_{\Sh(\dbicat,K)}$-covering, \ie it is jointly epic. We can then consider the pullback squares
	\[
	\begin{tikzcd}
	    F_i \ar[d, "p_i"'] \ar[r, "f_i"] & {\ell_K(A(Y_i))} \ar[d, "\ell_K(A(y_i))"]\\
	    F \ar[r, "f"'] & \ell_K(A(X)): \ar[ul, "\lrcorner" near end, phantom]
	\end{tikzcd}
	\]
	notice that the family $\{p_i:F_i\rightarrow F\ |\ i\in I\}$ is small and jointly epic in $\Sh(\dbicat,K)$, and so the family $\{(p_i,y_i)\ |\ i\in I\}$ is $J^K_A$-covering in $\icat_A$. 
	
	Now, since $\dbicat$ is also small-generated we have a small $K$-dense subcategory $\bbicat$ of $\dbicat$: then every $F_i$ is a colimit of a (small) diagram of representables $\ell_K(D_{i,j})$, $j\in J_i$, such that each object $D_{i,j}$ lies in $\bbicat$. The legs $\lambda_{i,j}:\ell_K(D_{i,j})\rightarrow F_i$ of the colimit cocone are a small jointly epic family in $\Sh(\dbicat,K)$, and thus we have for every $i$ a $J^K_A$-covering family
	\[
	(\lambda_{i,j},1_Y):
	\left(\ell_K(D_{i,j}), Y_i, {f_i\circ \lambda_{i,j}}\right) \rightarrow\left( F_i, Y_i, f_i\right)\]
	in $\icat_A$. Composing each of these families with the family $\{(p_i, y_i)\ |\ i\in I\} $ above, we obtain the $J^K_A$-covering family for $(F,X,f)$ whose domains are all of the form $(\ell_K(B), Z, g:\ell_K(B)\rightarrow \ell_K(A(Z))$ for some $B$ in $\bbicat$ and $Z$ in $\abicat$. We can thus define $\ibicat_A$ to be the full subcategory of $\icat_A$ of objects of this kind: since $\abicat$ and $\bbicat$ are small it has a set of objects, and it is locally small, implying that $\ibicat_A$ is small, and we have just showed that it is $J^K_A$-dense in $\icat_A$. Thus $(\icat_A, J^K_A)$ is small-generated.
 \end{proof}

\subsection{Relative analogue of the Yoneda functor}\label{yonedarel}

The fiber over $E$ of the stack associated to the canonical relative site of a relative topos $f: \cal F \to E$ have been previously described as ${\cal F} \slash f^*E$. These are toposes, and when a relative topos is induced by a site over $({\cal C},J)$, that is a comorphism of sites $p: ({\cal D}, K) \to ({\cal C}, J)$, the fiber of its canonical relative site at $c$ is given by $\Sh({\cal D}, K)\slash C_{p}^{\ast}(l_{J}c)$. We have a site presentation of these fiber toposes: we take $(p \downarrow c , K_c)$, where $K_c$ says that a sieve is covering if and only if its projection on $\cal D$ is a covering one .

Actually, $(p\downarrow c)$ is the category of elements of ${\cal C}(p-,c)$, and $K_c$ is the Giraud topology on it. The correspondence is easily seen with presheaves categories, described as follows: if we have a presheaf $Q$ over $(p\downarrow c)$, for every object $d$ of $\cal D$ we can take the union of all the fibers of $Q$ over arrows coming from $p(d)$: $\amalg_{u: p(d) \to c}Q_{u}$ to be the image of $d$ by a new presheaf in $\cal D$, naturally given with a natural transformation to ${\cal C}(p-,c)$ defined pointwise by sending an object of the sum to its index. In the other direction, if we have a natural transformation from a presheaf $Q$ to ${\cal C}(p-,c)$ we can take its fibers at each arrow $ u: p(d) \to c$, and it gives a presheaf over $(p\downarrow c)$. In general, to see that $\Sh (P,J_P) \simeq \Sh(a_J(P),J_{a_J(P)})$ when the topologies are Giraud's ones, we can show that the unit of the sheafification induces a weakly dense morphism of sites between these sites, concluding by \cite{CaramelloZanfa} Prop 2.10.7.

Following this point of view, the Yoneda embedding sends an arrow $u: p(d) \to c$ to the representable associated to $d$ evaluating its identity on $u$: $\yo_{(p \downarrow c)}(u) = ev_u: \yo_{\cal D}(d) \to \yo_{\cal C}(p(-),c)$. So, the representable presheaves of the fiber over $c$ are the representable presheaves of $\cal D$ seen in all the possible ways over $\yo_{\cal C}(p(-),c)$.

All these sites presenting all the fibers, we can bring them all together as $(p \downarrow 1_{\cal C})$, taking into account all these Yoneda functors on all the fibers, and we obtain a ``Yoneda functor varying through the base'':

\begin{prop}[Theorem 3.20. \cite{denseness}]\label{propdualcomma}
	Let $p:({\cal D}, K)\to ({\cal C}, J)$ be a comorphism of small-generated sites. We have a functor
	\[
	\xi_{p}: (p\downarrow 1_{\cal C}) \to (1_{\Sh({\cal D}, K)} \downarrow C_{p}^{\ast})
	\]
	sending an object $(d, c, \alpha:G(d)\to c)$ of the category $(p\downarrow 1_{\cal C})$ to the object $(l_{K}d, l_Jc, l_Kd\to C_{p}^{\ast}(l_Jc))\cong a_{K}(p^{\ast}(\yo_{\cal D}d))=a_{K}(\Hom_{\cal C}(p(-), c))$ which is the sheafification of the evaluation of the identity of the source representable onto the arrow $\alpha$, well-defined by the Yoneda lemma. This functor is a dense bimorphism of sites
	$((p\downarrow 1_{\cal C}), \overline{K}) \to ((1_{\Sh({\cal D}, K)} \downarrow C_{p}^{\ast}), J_{C_{p}})$ where $\overline{K}$ have for coverings the ones having arrows of the first component being a cover in $\cal D$, and $J_{C_{p}}$ the canonical relative topology defined in  \ref{thm:relativesitegeometricmorphism}.
\end{prop}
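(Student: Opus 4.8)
The plan is to verify that $\xi_p$ satisfies, for the pair of topologies $(\overline{K}, J_{C_p})$, the denseness conditions that characterise those morphisms of sites inducing equivalences — cover-preservation, cover-reflection, $J_{C_p}$-density, cover-fullness and cover-faithfulness — and then to observe that the same local arguments, crucially relying on the fact that $p$ is itself a comorphism of sites, also make $\xi_p$ a comorphism of sites; together these yield that $\xi_p$ is a dense bimorphism of sites.

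First I would pin down $\xi_p$ and check that it is well defined. By the Yoneda lemma an object $(d,c,\alpha\colon p(d)\to c)$ of $(p\downarrow 1_{\cal C})$ is precisely a natural transformation $\yo_{\cal D}d\to \Hom_{\cal C}(p(-),c)=p^{\ast}(\yo_{\cal C}c)$; applying $a_K$ and the canonical isomorphism $C_p^{\ast}(l_Jc)\cong a_K(p^{\ast}(\yo_{\cal C}c))$ — immediate from $C_p^{\ast}=a_K(-\circ p^{op})i_J$ and idempotency of $a_K$ — produces an arrow $l_Kd\to C_p^{\ast}(l_Jc)$, that is, an object of $(1_{\Sh({\cal D},K)}\downarrow C_p^{\ast})$, functoriality being forced by naturality of Yoneda and of sheafification. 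The two topological conditions are then essentially built into the definitions: an $\overline{K}$-covering sieve of $(d,c,\alpha)$ has first components generating a $K$-cover of $d$, so $\xi_p$ sends it to a family whose first components are $l_K$ of a $K$-cover, hence jointly epic in $\Sh({\cal D},K)$, which by the description of $J_{C_p}$ in Theorem~\ref{thm:relativesitegeometricmorphism} is a $J_{C_p}$-cover; conversely $l_K$ reflects covers, which gives cover-reflection.

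The heart of the matter is $J_{C_p}$-density together with cover-fullness and cover-faithfulness. For density, given an object $(F,G,\beta\colon F\to C_p^{\ast}G)$ of the target I would first pull $\beta$ back along the legs of a presentation of $G$ as a colimit of representables $l_Jc$ — legitimate since $C_p^{\ast}$ preserves colimits, so the resulting family is $J_{C_p}$-covering — reducing to $G=l_Jc$; then present $F$ as a colimit of representables $l_Kd$ and observe that each composite $l_Kd\to C_p^{\ast}(l_Jc)\cong a_K(p^{\ast}(\yo_{\cal C}c))$ is an element of $a_K(\Hom_{\cal C}(p(-),c))(d)$. The one genuinely delicate point is that such an element is only $K$-locally represented by an honest arrow $p(d)\to c$: refining $d$ by a $K$-cover on whose members the element is realised by actual morphisms $\alpha_j\colon p(d_j)\to c$, the objects $(d_j,c,\alpha_j)$ lie over $(F,l_Jc,\beta)$ through $\xi_p$, and assembling over all legs gives the required $J_{C_p}$-covering family with domains in the image of $\xi_p$. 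Cover-fullness and cover-faithfulness are handled by the same localisation: a sheaf morphism between the $l_K$'s (resp.\ the $l_J$'s) comes from a ${\cal D}$-morphism (resp.\ a ${\cal C}$-morphism) only $K$-locally (resp.\ $J$-locally), and to convert a $J$-cover of the ${\cal C}$-component into an $\overline{K}$-cover — which may refine only the ${\cal D}$-component — one pulls it back along $\alpha$ and invokes the comorphism property of $p$, so that the pulled-back $J$-covering sieve on $p(d)$ is refined by a $K$-covering sieve on $d$. This is precisely where the hypothesis on $p$ enters.

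Finally, the five denseness conditions identify $\xi_p$, by the denseness criterion for morphisms of sites of \cite{denseness}, as a dense morphism of sites — in particular one inducing an equivalence $\Sh((p\downarrow 1_{\cal C}),\overline{K})\simeq\Sh((1_{\Sh({\cal D},K)}\downarrow C_p^{\ast}),J_{C_p})$ over $\Sh({\cal C},J)$; notice that covering-flatness of $\xi_p$, which would otherwise need separate care since $(p\downarrow 1_{\cal C})$ need not have finite limits, is part of that conclusion. That $\xi_p$ is also a comorphism of sites then follows from the same ingredients: given a $J_{C_p}$-covering sieve of $\xi_p(d,c,\alpha)$, its members are refined — after presenting their first components by representables, localising on ${\cal D}$, and transporting the ${\cal C}$-component covers through $p$ as above — by arrows coming from an $\overline{K}$-covering sieve of $(d,c,\alpha)$. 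I expect the main obstacle to be exactly the one just flagged: every ``denseness''-type statement holds only $K$-locally on the first component, so the proof is largely an exercise in bookkeeping of nested covers — collapsing $G$ to representables, then $F$ to representables, then refining $d$ until the sheaf-theoretic data become genuine morphisms of ${\cal C}$ and ${\cal D}$, all while keeping track of the comma structure.
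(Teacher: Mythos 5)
The paper does not actually prove this statement: it is imported verbatim as Theorem~3.20 of \cite{denseness}, so there is no in-paper argument to compare yours against. That said, your strategy --- verifying the denseness conditions (cover-preservation, cover-reflection, local surjectivity on objects, cover-fullness, cover-faithfulness) for the pair $(\overline{K}, J_{C_p})$ and then observing that the same localisations make $\xi_p$ a comorphism --- is exactly the natural route, and it is the one the cited reference takes; your identification of the crux (that every sheaf-theoretic datum in the target is only $K$-locally represented by an honest arrow of $\cal D$ or $\cal C$, and that $J$-covers on the $\cal C$-component must be transported to $K$-covers on the $\cal D$-component by pulling back along $\alpha$ and invoking the comorphism property of $p$, since $\overline{K}$ only refines the first component) is correct and is where the real content lies. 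One small imprecision: the isomorphism $C_p^*(l_Jc)\cong a_K(\Hom_{\cal C}(p(-),c))$ is not a consequence of idempotency of $a_K$ alone, since $C_p^*(l_Jc)=a_K\bigl((i_Ja_J\yo_{\cal C}c)\circ p^{op}\bigr)$ involves the $J$-sheafification of $\yo_{\cal C}c$ rather than $\yo_{\cal C}c$ itself; one needs that $-\circ p^{op}$ sends the $J$-dense unit $\yo_{\cal C}c\to i_Ja_J\yo_{\cal C}c$ to a $K$-local isomorphism, which is again precisely the comorphism property of $p$. With that point made explicit, the outline is sound.
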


It is a dense bimorphism of sites, as the usual Yoneda-sheafification functor is for a non-relative topos. Moreover, we can conserve this property even if we restrict to some ``diagonal'' version of this functor:

\begin{prop}[Proposition 2.5 \cite{fibered}]\label{propbidense}
	Let $p:({\cal D}, K)\to ({\cal C}, J)$ be a comorphism of sites. There is a dense bimorphism of sites 
	\[
	\eta_{p}:({\cal D}, K) \to ((1_{\Sh({\cal D}, K)} \downarrow C_{p}^{\ast}), J_{C_{p}})
	\]
    defined as the composite of the functor $\xi_{p}$ of Proposition \ref{propdualcomma} with the functor $j_{p}:({\cal D}, K) \to ((p\downarrow 1_{\cal C}), \overline{K})$ of Theorem 3.18 \cite{denseness}. Explicitly, $\eta_p$ acts by mapping an object $d$ of $\cal D$ to the triplet $(\ell_K(d), \ell_J(p(d)), x_{d}:\ell_K(d)\to C_{p}^{\ast}(\ell_Jp(d)))\cong \Sheafify_{K}(\cbicat(p(-), p(d)))$, where $x_{d}$ is the sheafification of the arrow $\yo_{\cal D}(d)\to \cbicat(p(-), p(d))$ corresponding to the evaluation on the identity of $p(d)$.
\end{prop}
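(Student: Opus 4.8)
The plan is to realise $\eta_p$ as a composite of two functors that are already known to be dense bimorphisms of sites, and then to appeal to the stability of this class under composition. Explicitly, write $\eta_p = \xi_p \circ j_p$, where $\xi_p : ((p\downarrow 1_{\cal C}), \overline{K}) \to ((1_{\Sh({\cal D}, K)}\downarrow C_p^\ast), J_{C_p})$ is the functor of Proposition \ref{propdualcomma}, shown there to be a dense bimorphism of sites, and $j_p : ({\cal D}, K) \to ((p\downarrow 1_{\cal C}), \overline{K})$, $d\mapsto (d, p(d), 1_{p(d)})$, is the diagonal functor of Theorem 3.18 of \cite{denseness}, which is likewise a dense bimorphism of sites. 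The codomain site of $j_p$ is, on the nose, the domain site of $\xi_p$ (both carry the topology $\overline{K}$), so the composite makes sense; it then remains to check that $\eta_p$ is again a dense bimorphism and to identify its values.

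For the first point I would verify the three constituent properties in turn. First, a composite of comorphisms of sites is a comorphism of sites, since the covering-lifting condition is patently transitive. Second, a composite of morphisms of sites is a morphism of sites: cover-preservation is transitive, and the flat functor induced at the presheaf/sheaf level by $\eta_p$ is, up to canonical isomorphism, the composite of the two flat functors induced by $j_p$ and $\xi_p$, hence flat; thus $\eta_p$ is a bimorphism of sites. Third, for density: each of $j_p$, $\xi_p$ induces an equivalence of toposes; for a bimorphism of sites the geometric morphism arising from the morphism structure agrees, under the duality of \cite{denseness}, with the one arising from the comorphism structure; and equivalences of toposes compose. Hence $\eta_p$ induces an equivalence of toposes and is therefore dense, so $\eta_p$ is a dense bimorphism of sites $({\cal D}, K) \to ((1_{\Sh({\cal D}, K)}\downarrow C_p^\ast), J_{C_p})$.

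For the explicit description, I would simply unwind the two definitions. Given an object $d$ of ${\cal D}$ we have $\eta_p(d) = \xi_p(j_p(d)) = \xi_p(d, p(d), 1_{p(d)})$, and applying the formula of Proposition \ref{propdualcomma} with the base object $c = p(d)$ and the structural arrow $\alpha = 1_{p(d)}$ produces the triplet $(\ell_K(d), \ell_J(p(d)), x_d : \ell_K(d) \to C_p^\ast(\ell_J p(d)))$, where $x_d$ is the sheafification of the presheaf morphism $\yo_{\cal D}(d)\to \cbicat(p(-), p(d))$ obtained, via the Yoneda lemma, by evaluating the identity of $d$ on $1_{p(d)}$ --- that is, the natural transformation carrying $g:d'\to d$ to $p(g)$. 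Since $C_p^\ast(\ell_J c) = a_K(\cbicat(p(-), c))$ by the defining formula for $C_p^\ast$, the target of $x_d$ is canonically $\Sheafify_K(\cbicat(p(-), p(d)))$, which is exactly the asserted identification.

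The step I expect to be the main obstacle is the second one above, namely checking that the \emph{morphism}-of-sites structure really is preserved under composition --- that is, that the functor induced by $\eta_p$ on the relevant presheaf categories is the composite of those induced by $j_p$ and $\xi_p$ and, in particular, remains flat (equivalently, that $\eta_p$ is continuous and satisfies the required covering-flatness conditions). One should also confirm that the intermediate topology $\overline{K}$ appearing in Theorem 3.18 of \cite{denseness} is literally the $\overline{K}$ of Proposition \ref{propdualcomma}, so that the composition typechecks. Should \cite{denseness} only establish that $j_p$ is a dense \emph{comorphism} of sites, one would instead prove the missing morphism-of-sites half of $\eta_p$ directly, using that $\xi_p$ is a morphism of sites together with density of $j_p$. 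Everything else --- transitivity of covering-lifting, composition of equivalences, and the Yoneda computation --- is routine, and I would not dwell on it.
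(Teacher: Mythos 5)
Your proposal is correct and matches the route the paper itself intends: the paper gives no separate proof of this statement (it imports it from \cite{fibered}), defining $\eta_p$ precisely as the composite $\xi_p\circ j_p$ of the dense bimorphisms from Proposition \ref{propdualcomma} and Theorem 3.18 of \cite{denseness}, so the argument is exactly your ``dense bimorphisms compose'' step plus the Yoneda unwinding of $\xi_p(d,p(d),1_{p(d)})$. The only point to keep straight is the one you already flag yourself (that $j_p$ is indeed a dense bimorphism for the same topology $\overline{K}$), together with noting that the identification of the target of $x_d$ with $\Sheafify_K(\cbicat(p(-),p(d)))$ uses the cover-lifting property of $p$ to identify $a_K((i_Ja_J\yo_{\cal C}(p(d)))\circ p^{op})$ with $a_K(\cbicat(p(-),p(d)))$, as in Proposition \ref{propdualcomma}.
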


\begin{remarks}\label{remarkseta}
	\begin{enumerate}[(a)]
	    \item Notice that the image of $\eta_p$ factors through the obvious inclusion $(1_{\Sh({\cal D},K)} \downarrow {C_p^*\ell_J}) \hookrightarrow (1_{\Sh({\cal D},K)} \downarrow {C_p^*})$.
     
        \item If $p$ is faithful then $\eta_{p}$ takes values in the subcategory $(1_{\Sh({\cal D}, K)} \downarrow^{\textup{Sub}} C_{p}^{\ast})$ called the reduced relative site.

		\item When $p=p_{\mathbb P}$ for a fibred preorder site $\mathbb P$ then it is clearly faithful and hence the previous point applies. Moreover, under this hypothesis, the geometric morphism $C_{p}$ is localic and hence the functor $\eta_{p}$ yields a dense bimorphism of sites from $({\cal D}, K)$ to the reduced relative site of $C_{p}$.  
		
		\item  If $p$ is $(K, J)$-continuous, then $\eta_{p}$ sends $d$ in $\cal D$ to the unit of $\lan_{p^{op}} \dashv (-\circ p^{op})$ at $\yo_{\cal D}(d)$. Indeed, under this continuity hypothesis the description of $C_{p}^{\ast}$ simplifies: for any $K$-sheaf $Q$, $C_{p}^{\ast}(Q)=Q\circ p^{\textup{op}}$. Since that by definition $x_d$ is the sheafification of the unit of $\lan_{p\op} \dashv p^*$ at $\yo_{\cal D}(d)$, if $p$ is continuous there is no need to sheafify.
		
		\item If $\cal C$ and $\cal D$ have finite limits and $p$ preserves them, then the functor $\eta_{p}$ also preserves finite limits. Indeed, as the identity and $C_p^*$ preserve finite limits, the comma category is finitely complete and the limits are computed componentwise in it. Since $p$, $C_p^*$ and $l_K$ preserve them, the result is immediate.
	\end{enumerate}
	
\end{remarks}

\section{Relative Diaconescu's theorem}

As explained in the introduction, in this section we shall present three different approaches to the characterization of relative geometric morphisms in terms of functors between sites.

This will culminate, in section 3.2.2, with the proof of a relative version of Diaconescu's equivalence.

\subsection{Approach through the classification theorem}\label{sec:approachDiaconescuclassification}

In this section, we shall investigate how the classification theorems for geometric morphisms over an arbitrary base topos established in section 4 of \cite{CaramelloZanfa} restricts to $2$-natural transformations that are isomorphisms, by using cofinality conditions.

As we saw in section 4.2 \cite{CaramelloZanfa}, given a relative topos $\Etopos \xrightarrow{f} \Sh ({\cal C},J) $ and a comorphism of sites $({\cal D}, K) \xrightarrow{p} ({\cal C}, J)$ we have a $2$-categorical equivalence between the slice categories:
\begin{center}
$\Topos // \Sh ({\cal C},J) ([f],[C_p]) \simeq \Site (({\cal D}, K),(\Etopos,J\can_\Etopos))/f^*\ell_j p$    
\end{center}
 
Let us describe how the correspondence works (recall that two adjoint functors induce adjoints functors between categories of functors, in the same direction by post-composition, and in the reverse direction by precomposotion): 

\begin{itemize}
    \item If we have a diagram 
    \begin{center}
\[\begin{tikzcd}
	{\cal E} && {\Sh({\cal D},K)} \\
	& {\Sh({\cal C},J)}
	\arrow["{C_p}", from=1-3, to=2-2]
	\arrow[""{name=0, anchor=center, inner sep=0}, "f"', from=1-1, to=2-2]
	\arrow["g", dashed, from=1-1, to=1-3]
	\arrow[from=1-3, to=2-2]
	\arrow["\phi"{description}, shorten >=10pt, Rightarrow, dashed, from=1-3, to=0]
\end{tikzcd}\]
    \end{center}
    where $\phi: g^*C_p^* \Rightarrow f^* $ is a natural transformation, we have $C_p^* = a_k (- \circ p^{op})i_k$, and, since $i_k$ and $(- \circ p^{op})$ are two right adjoint functors, we can transpose $\phi$ into $\Tilde{\phi}: g^*a_k \Rightarrow f^*a_J\lan_{p^{op}}$. This transformation is determined, up to isomoprhism, by its action on the generators, and, in light of the good behavior of the left Kan extension with regard to them, we obtain: $\Tilde{\phi}\yo_{\cal D}: g^*l_K \Rightarrow f^*l_Jp$
    \item In the other direction, if we have a natural transformation $\phi$ as in the diagram
    \begin{center}
\[\begin{tikzcd}
	{\cal E} && {({\cal D},K)} \\
	& {({\cal C},J)}
	\arrow["p", from=1-3, to=2-2]
	\arrow["{f^*l_J}", from=2-2, to=1-1]
	\arrow[""{name=0, anchor=center, inner sep=0}, "A"', dashed, from=1-3, to=1-1]
	\arrow["\phi"', shorten <=3pt, Rightarrow, dashed, from=0, to=2-2]
\end{tikzcd}\]
    \end{center}
    we obtain, by pre-composition, a natural transformation $( - \circ \phi^{op}): (- \circ p^{op})(- \circ (f^*l_J)^{op}) \Rightarrow (- \circ A^{op}) $ at the level of presheaf categories. Applying sheafification and inclusion yields a natural transformation at the level of the sheaf categories: $a_K( - \circ \phi^{op})i_{\cal E}: a_K(- \circ p^{op})(- \circ (f^*l_J)^{op})i_{\cal E} \Rightarrow a_K(- \circ A^{op})i_{\cal E} $ where $(i_{\cal E} \vdash a_{\cal E})$ is the ajunction corresponding to the canonical topology on $\cal E$. Since $p$ is a comorphism, pre-composition by $p$ is leaved unchanged by sheafification on the source. Recall also that $f^*l_J$ is a morphism of sites for the canonical topology of $\cal E$, inducing the geometric morphism $f$. As a consequence, we have $a_K( - \circ \phi^{op})i_{\cal E}: C_p^*f_* \Rightarrow A_* $. By transposing twice, we obtain $\overline{a_K( - \circ \phi^{op})i_{\cal E}}: \Sh(A)^*C_p^* \Rightarrow f^* $.
\end{itemize}

In order to characterize morphisms at the site level inducing geometric morphisms \textit{over} $\Sh({\cal C},J)$, we want to restrict to those for which the induced $\tilde{\phi}$ between the inverse images are natural isomorphisms. For this, we shall describe the subcategories of the left-hand slice category of geometric morphisms in terms of the associated functors between relative sites, by representing $f$ and $C_{p'}$ where $p'$ is the canonical stack of $f$ over $\cal C$. Note that, by the universal property of the comma category $(1\downarrow f^{\ast}l_{J})$, giving a functor $A:{\cal D}\to {\cal F}$ together with a natural transformation $\phi:A \to f^{\ast}l_{J}p$ corresponds precisely to giving a functor ${\cal D}\to (1\downarrow f^{\ast}l_{J})$ over $\cal C$:

\[\begin{tikzcd}
	{\cal D} \\
	& {(1 \downarrow f^*l_J)} & {\cal C} \\
	& {\cal F} & {\cal F}
	\arrow[dashed, from=1-1, to=2-2]
	\arrow["{1_{\cal F}}"', from=3-2, to=3-3]
	\arrow["{f^*l_J}", from=2-3, to=3-3]
	\arrow["{\pi_{\cal F}}"', from=2-2, to=3-2]
	\arrow["{\pi_{\cal C}}", from=2-2, to=2-3]
	\arrow[""{name=0, anchor=center, inner sep=0}, "p", bend right=-18, from=1-1, to=2-3]
	\arrow[""{name=1, anchor=center, inner sep=0}, "A"', bend left=-20, from=1-1, to=3-2]
	\arrow[Rightarrow, from=3-2, to=2-3]
	\arrow["\phi"{pos=0.6}, shorten <=7pt, shorten >=7pt, Rightarrow, from=1, to=0]
\end{tikzcd}\]

See Theorem \ref{thm:RelativeDiaconescumorphismsrelativesites} below for the precise correspondence between such functors and the relative geometric morphisms which they induce.

Interestingly, if the comorphism $p'$ representing $f$ is not the canonical one (arising from the canonical stack of $f$), there may be functors $A:{\cal D}\to {\cal D}'$, together with a natural transformation $\phi: p' \circ A \Rightarrow p$, inducing morphisms of toposes $C_{p'}\to C_{p}$ over $\Sh({\cal C}, J)$ without $\phi$ being an isomorphism. This motivates the introduction of the following notions: 

\begin{defn}\label{defmorphismrelsites}
Let $({\cal C}, J)$ be a small-generated site, and $p:({\cal D}, K)\to ({\cal C}, J)$ and $p':({\cal D'}, K')\to ({\cal C}, J)$ two comorphisms of sites. Let $(A, \phi)$ be a pair consisting of a morphism of sites $A:({\cal D}, K)\to ({\cal D}', K')$ and a natural transformation $\phi:p'\circ A \Rightarrow p$.

\begin{enumerate}[(a)]
    \item We say that $(A, \phi)$ is a \emph{lax morphism of sites over $({\cal C},J)$} if $C_p \circ \Sh(A) \simeq C_{p'}$.
    \item We say that $(A, \phi)$ is a \emph{morphism of sites over $({\cal C}, J)$} if $\phi$ is an isomorphism, and $C_p \circ \Sh(A) \simeq C_{p'}$.

    \item If $p$ and $p'$ are fibrations endowed with Grothendieck topologies containing the respective Giraud ones, that is, relative sites over $({\cal C}, J)$, we say that $A$ is a \emph{morphism of relative sites} if $A$ is an ordinary morphism of sites $({\cal D}, K)\to ({\cal D}', K')$ which is moreover a morphism of fibrations (i.e. $\phi$ is an isomorphism and it sends cartesian arrows to cartesian arrows).
\end{enumerate}

Indeed, we are interested in exhibiting necessary and sufficient conditions for a morphism of sites $A:({\cal D}, K)\to ({\cal D'}, K')$, together with a natural transformation $\phi: p' \circ A \Rightarrow p$ to be a morphism of sites \textit{over} $({\cal C}, J)$, \textit{i.e.} to induce a morphism of relative toposes $\Sh(A):[C_{p}] \to [C_{p'}]$.

\end{defn}
    
As in the case of the above correspondence, the natural transformation $\phi:p'\circ A \Rightarrow p$ induces a natural transformation $\Tilde{\phi}: (C_{p}\circ \Sh(A))^{\ast}=\Sh(A)^{\ast}\circ C_{p}^{\ast} \Rightarrow C_{p'}^{\ast}$, defined as follows: Starting from
\[
\phi^*: p^*\Rightarrow A^*\circ p'^*,
\]
defined as pre-composition with $\phi\op$, we consider its componentwise mate 
\[
\tilde{\phi}:\lan_{A\op}\circ p^*\Rightarrow p'^*
\]
and then we compose it with $\Sheafify_{K'}$, obtaining a natural transformation \[
\Tilde{\phi}:\Sh(A)^*\circ C_p^*\circ \Sheafify_J\simeq \Sheafify_{K'}\circ\lan_{A\op}\circ p^*\xRightarrow{\Sheafify_{K'}\circ \tilde{\phi}} \Sheafify_{K'}\circ p'^*\simeq C_{p'}^*\circ \Sheafify_J. 
\]
Notice that the canonical isomorphisms appearing are due to the fact that $A$ is a morphism of sites (or, more precisely, that it is $(K,K')$-continuous), while $p$ and $p'$ are comorphisms of sites.

Thus, for each $c$ in $\cbicat$, we have the component
\[
\Tilde{\phi}(\ell_J(c)):\Sh(A)^{\ast}(C_{p}^{\ast}(\ell_J(c)))) \to C_{p'}^{\ast}(\ell_J(c)),
\] 
while the other components of $\Tilde{\phi}$ can be retrieved by considering each $J$-sheaf as a colimit of representable presheaves.

We want to characterize the pairs $(A, \phi)$ such that $\Tilde{\phi}$ is an isomorphism. For this, we shall find it convenient to resort to the theory of relative cofinality developed in \cite{denseness}. The key result that we are going to use is the following:

\begin{prop}[Proposition 2.21. \cite{denseness}]\label{procofinality}
	Let $({\cal C}, J)$ be a small-generated site and $F:{\cal A}\to {\cal C}$ and $F':{\cal A}'\to {\cal C}$ two functors to $\cal C$ related by a functor $\xi:{\cal A}\to {\cal A}'$ and a natural transformation $\alpha:F\Rightarrow F'\circ \xi$. Then the canonical arrow 
	\[
	\tilde{\alpha}: \colim_{[{\cal C}\op, \Set]}(\yo_{\cal C}\circ F) \to \colim_{[{\cal C}\op, \Set]}(\yo_{\cal C}\circ F')
	\]
	defined above is sent by $\Sheafify_{J}$ to an isomorphism 
	\[
	\Sheafify_{J}(\tilde{\alpha}): \colim_{\Sh({\cal C}, J)}(\ell_J\circ F) \to \colim_{\Sh({\cal C}, J)}(\ell_J\circ F') 
	\]
	if and only if $(\xi, \alpha)$ satisfies the following \ac cofinality' conditions:
	\begin{enumerate}[(i)]
		\item For any object $X$ of $\cal C$ and any arrow $x:X\to F'(A')$ in $\cal C$ there are a $J$-covering family $\{y_i: Y_i \to X \mid i\in I\}$ and for each $i\in I$ an object $A_{i}$ of $\cal A$ and an arrow $p_i:Y_{i}\to F(A_{i})$ such that $x\circ y_{i}$ and $\alpha(A_{i})\circ p_{i}$ belong to the same connected component of $\comma{Y_i}{F'}$.
		
		\item For any object $X$ of $\cal C$ and any arrows $x:X\to F(A)$ and $x':X\to F(B)$ in $\cal C$ such that $\alpha(A)\circ x$ and $\alpha(B)\circ x'$ belong to the same connected component of $\comma{X}{F'}$ there is a $J$-covering family $\{y_i: Y_i \to X \mid i\in I\}$ such that $x\circ y_i$ and $x'\circ y_i$ belong to the same connected component of $\comma{Y_i}{F}$. 
	\end{enumerate}	
\end{prop}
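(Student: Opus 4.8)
The plan is to realise $\Sheafify_J(\tilde{\alpha})$ as the sheafification of an explicit comparison map between two presheaves and then to run the standard criterion for a morphism of presheaves to be inverted by $\Sheafify_J$. Since $\Sheafify_J$ preserves colimits, $\colim_{\Sh({\cal C},J)}(\ell_J\circ F)\cong\Sheafify_J(P)$ with $P:=\colim_{[{\cal C}\op,\Set]}(\yo_{\cal C}\circ F)$, and likewise for $F'$, so it suffices to decide when $\tilde{\alpha}\colon P\to P'$ is inverted by $\Sheafify_J$. Colimits of presheaves are computed objectwise, and a colimit of a diagram of sets is the set of connected components of its category of elements; evaluating at an object $X$ of $\cal C$ this gives $P(X)\cong\pi_0\comma{X}{F}$ and $P'(X)\cong\pi_0\comma{X}{F'}$, with restriction along $f\colon Y\to X$ induced by precomposition $\comma{X}{F}\to\comma{Y}{F}$, $(A,u)\mapsto(A,u\circ f)$. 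Under these identifications $\tilde{\alpha}_X$ sends the class of $(A,x\colon X\to F(A))$ to the class of $(\xi(A),\alpha(A)\circ x)$, which is well defined on connected components precisely by naturality of $\alpha$.

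The next step is the standard fact that $\Sheafify_J$ inverts a morphism of presheaves if and only if that morphism is a $J$-local isomorphism, \ie simultaneously a $J$-local epimorphism and a $J$-local monomorphism. Here $\tilde{\alpha}$ is a local epimorphism iff for every $X$ and every section $t$ of $P'$ over $X$ the sieve $\{f\colon Y\to X\mid t\cdot f\in\Im(\tilde{\alpha}_Y)\}$ is $J$-covering; and $\Sheafify_J(\tilde{\alpha})$ is a monomorphism iff the diagonal $P\to P\times_{P'}P$ is inverted by $\Sheafify_J$ (using left-exactness of $\Sheafify_J$), which, that diagonal being a split monomorphism, happens iff it is a $J$-local epimorphism, \ie iff for every $X$ and every pair $s,s'$ of sections of $P$ over $X$ with $\tilde{\alpha}_X(s)=\tilde{\alpha}_X(s')$ the sieve $\{f\colon Y\to X\mid s\cdot f=s'\cdot f\}$ is $J$-covering. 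Both families of arrows are genuine sieves, and a sieve is $J$-covering iff it contains a $J$-covering family, so what remains is a purely mechanical translation.

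For local epimorphy: a section of $P'$ over $X$ is the class of some $(A',x\colon X\to F'(A'))$, and for $f\colon Y\to X$ the class of $(A',x\circ f)$ lies in $\Im(\tilde{\alpha}_Y)$ exactly when there are $A$ in $\cal A$ and $p\colon Y\to F(A)$ with $(\xi(A),\alpha(A)\circ p)$ in the same connected component of $\comma{Y}{F'}$ as $(A',x\circ f)$; requiring this sieve to be $J$-covering for all $X$ and all $(A',x)$ is then verbatim condition (i). For local monomorphy: a section of $P$ over $X$ is the class of some $(A,x\colon X\to F(A))$; two classes $(A,x)$ and $(B,x')$ are identified by $\tilde{\alpha}_X$ exactly when $\alpha(A)\circ x$ and $\alpha(B)\circ x'$ lie in the same connected component of $\comma{X}{F'}$, and $s\cdot f=s'\cdot f$ in $P(Y)$ says exactly that $x\circ f$ and $x'\circ f$ lie in the same connected component of $\comma{Y}{F}$; requiring the equalizing sieve to be $J$-covering for every such pair is then verbatim condition (ii). Combining the two clauses gives the stated equivalence.

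I do not expect a genuinely hard step. The points demanding care are: quoting the criterion in exactly the right form (that $\Sheafify_J$ inverts precisely the $J$-local isomorphisms, and that local monomorphy of $\tilde\alpha$ amounts to local epimorphy of the diagonal $P\to P\times_{P'}P$); and, in the monomorphism clause, keeping the two comma categories $\comma{-}{F}$ and $\comma{-}{F'}$ in their correct roles and observing that the equalizing sieve depends only on the $\pi_0$-classes of $x$ and $x'$ (by functoriality of precomposition), so that formulating (ii) in terms of arrows rather than classes is harmless. In a possibly large-generated setting one should also note at the outset that a $J$-covering sieve is to be read as a sieve containing a $J$-covering family, and that the colimits $P$ and $P'$ are assumed to exist.
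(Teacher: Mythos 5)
Your argument is correct. Note that the paper does not prove this statement itself but merely recalls it as Proposition 2.21 of \cite{denseness}; your proof --- computing the presheaf colimits pointwise as $\pi_0$ of the comma categories $\comma{X}{F}$, $\comma{X}{F'}$ and invoking the characterization of the maps inverted by $\Sheafify_J$ as the $J$-local epimorphisms that are also $J$-local monomorphisms (the latter via the diagonal into $P\times_{P'}P$) --- is the standard route, and the translation you carry out is exactly the ``local surjectivity/injectivity'' reading that the paper itself uses when applying the proposition in Theorem \ref{thmcofinality}.
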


In order to apply it, we need to reformulate each component $\tilde{\phi}(\yo(c))$ as an arrow between colimits in the presheaf topos $[{\cal D}'^{op},\Set]$, and then express the condition for its $K'$-sheafification $\bar{\phi}(\ell_J(c))$ to be an isomorphism. 

First of all, notice that $p'^*\yo(c)=\cbicat(p'(-),c)$ is the colimit
\[
\colim\left(\comma{p'}{c}\xrightarrow{\pi^{p'}_c} {\cal D}' \xrightarrow{\yo_{D'}}[{\cal D}'^{op},\Set]\right)=\colim_{w:p'(d')\to c} \yo(d'),
\]
since $\comma{p'}{c}$ is the Grothendieck fibration associated to $\cbicat(p'(-),c)$. Similarly, the presheaf $\lan_{A\op}\circ p^*(\yo(c))$ is the colimit
\[
\colim\left( \comma{p}{c}\xrightarrow{\pi^p_c}\dbicat\xrightarrow{A} {\cal D}'\xrightarrow{\yo_{{\cal D}'}}[{\cal D}'^{op},\Set]\right)\cong \colim_{v:p(d)\to c} \yo(A(d)): 
\]
this can be shown exploiting the commutativity of $\lan_{A\op}$ with colimits and the natural isomorphism $\yo_{{\cal D}'}\circ A\cong \lan_{A\op}\circ \yo_\dbicat$. The arrow $\tilde{\phi}(\yo(c)):\lan_{A\op}\circ p^*(\yo(c))\to p'^*(\yo(c))$, being an arrow between colimits, can be induced at the level of diagrams by considering the commutative diagram
\[
\begin{tikzcd}
    {\comma{p}{c}}  \ar[d, "\pi^p_c"] \ar[r, "A_c"] & {\comma{p'}{c}} \ar[d, "\pi_c^{p'}"] \\
    \dbicat  \ar[r, "A"] & {\cal D}',
\end{tikzcd}
\]
where $A_c$ maps an object $(d, v:p(d)\to c)$ of $\comma{p}{c}$ to the object $(A(d), v\circ \phi_d: p'A(d)\to p(d)\to c)$ of $\comma{p'}{c}$. Thus, we can conclude the following:
\begin{thm}\label{thmcofinality}
    Let $(\cbicat,J)$ be a small-generated site, $p:(\dbicat,K)\to (\cbicat,J)$ and $p':({\cal D}',K')\to (\cbicat,J)$ two comorphisms, $A:(\dbicat,K)\to ({\cal D}',K')$ a morphism of sites and $\phi: p'\circ A\Rightarrow p$ a natural transformation. Then the following are equivalent:
    \begin{enumerate}[(i)]
        
        \item the pair $(A,\phi)$ induces a relative geometric morphism
        \[
        \Sh(A): [C_p:\Sh(\dbicat,K)\to \Sh(\cbicat,J)] \to [C_{p'}:\Sh({\cal D}',K')\to \Sh(\cbicat,J)];
        \]       
        
        \item the functors 
        \[
        A_c:\comma{p}{c}\to \comma{p'}{c},
        \]
        seen as functors over ${\cal D}'$, together with the identities $\pi_c^{p'}\circ A_c=A\circ \pi_c^p$ satisfy the simplified conditions of Proposition \ref{procofinality}: that is, 
        
        \begin{enumerate}[(a)]
            \item Local surjectivity of the canonical arrow between the colimits: for every arrow $p'(d') \xrightarrow{u} c $, there exist a covering sieve $S$ on $d'$ and, for every arrow $d'_i \xrightarrow{f_i} d'$ in $S$, a pair of arrows $(d'_i \xrightarrow{v_i} A(k_i),p(k_i) \xrightarrow{u_i} c)$ such that $u \circ p'(f_i) = u_i \circ \phi_{k_i} \circ p'(v_i)$.
            \item Local injectivity of the canonical arrow between the colimits: for every two pairs of arrows $(d' \xrightarrow{v} A(k), p(k) \xrightarrow{u} c)$ and $(d' \xrightarrow{v'} A(k'), p(k') \xrightarrow{u'} c)$ such that $u \circ \phi_k \circ p'(v) = u' \circ \phi_{k'} \circ p'(v')$, there exists a covering sieve $S$ on $d'$ such that, for every $d'_i \xrightarrow{f_i} d'$ in $S$: $v \circ f_i $ and $ v' \circ f_i$ are in the same connected components of the category $(d'_i \downarrow A\pi_c)$.
            \end{enumerate}
   
    \end{enumerate}

\end{thm}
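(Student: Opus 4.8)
The plan is to treat the statement as a componentwise computation: for each object $c$ of $\cbicat$, recognise $\Tilde{\phi}(\ell_J(c))$ as the $K'$-sheafification of the canonical comparison arrow of Proposition~\ref{procofinality} attached to the square defining $A_c$, and then to unwind the abstract cofinality conditions into the explicit ones (a) and (b). First I would reduce (i) to the invertibility of $\Tilde{\phi}$ on representables. By the discussion preceding the statement, $(A,\phi)$ induces a relative geometric morphism $\Sh(A)\colon[C_p]\to[C_{p'}]$ exactly when $\Tilde{\phi}\colon\Sh(A)^{\ast}\circ C_p^{\ast}\Rightarrow C_{p'}^{\ast}$ is an isomorphism. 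Both $\Sh(A)^{\ast}\circ C_p^{\ast}$ and $C_{p'}^{\ast}$ are inverse images of geometric morphisms, hence cocontinuous functors out of $\Sh(\cbicat,J)$; since every $J$-sheaf is a colimit of representables $\ell_J(c)$ and $\Tilde{\phi}$ is natural, $\Tilde{\phi}$ is an isomorphism if and only if each component $\Tilde{\phi}(\ell_J(c))$, $c\in\cbicat$, is one.

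Next I would identify these components. Using that $p$ and $p'$ are comorphisms and that $A$ is $(K,K')$-continuous, one has $C_p^{\ast}(\ell_J(c))\cong\Sheafify_K(p^{\ast}\yo(c))$, $\Sh(A)^{\ast}(C_p^{\ast}(\ell_J(c)))\cong\Sheafify_{K'}(\lan_{A\op}(p^{\ast}\yo(c)))$ and $C_{p'}^{\ast}(\ell_J(c))\cong\Sheafify_{K'}(p'^{\ast}\yo(c))$, under which $\Tilde{\phi}(\ell_J(c))$ becomes $\Sheafify_{K'}$ applied to the presheaf-level comparison $\tilde{\phi}(\yo(c))\colon\lan_{A\op}(p^{\ast}\yo(c))\to p'^{\ast}\yo(c)$. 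By the colimit computations recalled just before the statement, $\lan_{A\op}(p^{\ast}\yo(c))\cong\colim\bigl(\comma{p}{c}\xrightarrow{\pi^p_c}\dbicat\xrightarrow{A}{\cal D}'\xrightarrow{\yo_{{\cal D}'}}[{\cal D}'^{\op},\Set]\bigr)$ and $p'^{\ast}\yo(c)=\colim\bigl(\comma{p'}{c}\xrightarrow{\pi^{p'}_c}{\cal D}'\xrightarrow{\yo_{{\cal D}'}}[{\cal D}'^{\op},\Set]\bigr)$, and $\tilde{\phi}(\yo(c))$ is the arrow between these colimits induced by the commutative square with top row $A_c$, bottom row $A$ and vertical projections $\pi^p_c,\pi^{p'}_c$. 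Hence $\tilde{\phi}(\yo(c))$ is precisely the arrow $\tilde{\alpha}$ of Proposition~\ref{procofinality} for the site $({\cal D}',K')$, the functors $F:=A\circ\pi^p_c$ and $F':=\pi^{p'}_c$, the connecting functor $\xi:=A_c$, and the identity natural transformation $\alpha:=\id$ (legitimate since $\pi^{p'}_c\circ A_c=A\circ\pi^p_c$ by the very definition of $A_c$). Therefore, by Proposition~\ref{procofinality}, $\Tilde{\phi}(\ell_J(c))$ is an isomorphism if and only if $(A_c,\id)$ satisfies conditions (i) and (ii) of that proposition; combined with the previous paragraph, condition (i) of the theorem holds if and only if this is so for every $c$.

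What remains is to unwind conditions (i) and (ii) into (a) and (b). The key elementary observation is that, for a presheaf $P$ on a category and the projection $\pi_P$ from its category of elements, two objects of a comma category $(Z\downarrow\pi_P)$ lie in the same connected component if and only if the elements of $P(Z)$ that they induce (by transporting the chosen element along the structure arrow out of $Z$) coincide --- one direction being the invariance of this element along morphisms, the other being that $((Z,t),\id_Z)$ is a common predecessor. Applying this with $P=\cbicat(p'(-),c)$, so that $\pi_P=\pi^{p'}_c$, converts every ``same connected component of $(\,\cdot\,\downarrow\pi^{p'}_c)$'' clause appearing in Proposition~\ref{procofinality}(i) and in the hypothesis of (ii) into the displayed equations of arrows into $c$ of (a) and of the hypothesis of (b); tracing objects of $\comma{p}{c}$ and $\comma{p'}{c}$ through the definition of $A_c$ matches the data $(k_i,u_i,v_i)$ and $(k,u,v),(k',u',v')$ with the objects and arrows supplied by (i) and (ii). The conclusion of (ii) is expressed through $(d'_i\downarrow A\pi^p_c)$, which is not the projection of a category of elements, so it is left exactly as the connected-component condition of (b). Finally, condition (i) of Proposition~\ref{procofinality}, quantified over all test objects $X$ and arrows $x\colon X\to F'(A')$, reduces to its instances with $X=F'(A')$ and $x=\id$, since the covering sieve obtained there pulls back along any $x$ and the equations just derived are stable under precomposition; this produces precisely the quantification of (a). Condition (ii) is already stated over all test objects, and so matches (b) verbatim, once covering families are replaced by the sieves they generate.

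The genuine content is encapsulated in Proposition~\ref{procofinality}, which is invoked here as a black box, so the real work is the identification in the second paragraph and the unwinding in the third. The step I expect to cost the most care is verifying that $\Tilde{\phi}(\ell_J(c))$ really is the comparison arrow $\tilde{\alpha}$: one must check that the natural transformation $\Tilde{\phi}$ produced from $\phi$ --- the $K'$-sheafification of the componentwise mate of $\phi^{\ast}\colon p^{\ast}\Rightarrow A^{\ast}\circ p'^{\ast}$ --- coincides, as an arrow $\lan_{A\op}(p^{\ast}\yo(c))\to p'^{\ast}\yo(c)$, with the one induced by the square defining $A_c$ between the two colimit presentations. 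This is a naturality-of-mates bookkeeping argument; once it is in place, the translation into (a) and (b) is routine.
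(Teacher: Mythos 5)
Your proposal is correct and follows essentially the same route as the paper: the reduction of (i) to the invertibility of $\Tilde{\phi}$ on representables and the identification of $\Tilde{\phi}(\yo(c))$ with the comparison arrow of Proposition \ref{procofinality} for the square defining $A_c$ (with $\alpha=\id$) are exactly the steps carried out in the discussion preceding the theorem, and the paper's proof then performs the same unwinding of the connected-component conditions via the co-Yoneda/category-of-elements description of $\comma{p'}{c}$ that you give. Your write-up is in fact more explicit than the paper's about matching the quantifiers of Proposition \ref{procofinality}(i) with those of condition (a), but the content is the same.
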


\begin{proof}
The target $\colim_{w:p'(d')\to c} \yo(d')$ of $\Tilde{\phi}$ is just given by the co-Yoneda lemma. By construction, in such a colimit computed by sums and coequalizers, every class of ``$x \xrightarrow{f} d' $ in the summand indexed by $p'(d') \xrightarrow{u} c$'' is equal to the class of  ``$1_{x}$ in the summand indexed by $p'(x) \xrightarrow{p'(f)} p'(d') \xrightarrow{u} c$'', and two such classes are equal exactly when, viewed as a such class, their summand's indexes are equal. 

In the light of this formulation of the target colimit, the local surjectivity condition states that every element of the target functor is locally reached by $\Tilde{\phi}$. And the local injectivity condition is the reformulation of the fact that if two classes of the source colimits, that is to say two data of ``an arrow $x \to A(k)$ with an index summand $p(k) \to c$'' are sent to the same arrow $p'(x) \to p'A(k) \to p(k) \to c$ by the natural transformation, then they should already be equal in this source colimit, \textit{i.e.} being locally in the same connected component. 
\end{proof}

\begin{remarks}\label{remcof}

\begin{enumerate}[(a)]
    \item The advantage of this characterization is its generality, we do not need $A$ to be a morphism of sites: we just used the commutativity of $\Sh(A)^*$ with colimits. Hence, it could specialize to weaker classes of functor, for example the continuous ones, stating that we have a relative continuous functor if he satisfies these cofinality conditions together with the continuity conditions exposed in \cite{denseness} Prop 4.13.
    \item Note that the local surjectivity condition says that the categories $((p' \downarrow c) \downarrow A_c)$ are locally non-empty. Also, under the assumption that $A$ is a morphism of sites, the local injectivity condition simplifies. This criteria becomes, in fact, equivalent to: the categories $((p' \downarrow c) \downarrow A_c)$ locally have a cone over every diagram of product shape. Indeed, two arrows that can be linked by a product diagram are, de facto, linked, and if we have two arrows that are locally in the same connected component, we can, by the filtering characterization of $A$, link them by a product diagram in $(d' \downarrow A\pi^p_c)$. Also, the fact that the categories $((p' \downarrow c) \downarrow A_c)$ locally have the diagrams of equalizer shape is equivalent to the ``absolute'' analogue, as an arrow going to the source of two arrows always comes with an obvious index. We will explore this kind of conditions more in detail in the next part; this will give rise to the notion of \textit{relative} local filteredness (cf. Proposition \ref{propmorphimrelmorphism} below).

    \item One can show that if $A$ is a morphism of sites as well as a morphism of fibrations then the conditions of Theorem \ref{thmcofinality} are satisfied, by using the explicit characterization of morphisms of sites provided by Definition 3.2 \cite{denseness} and the factorization of any arrow in a fibration as a vertical arrow followed by a horizonal one (cf. also Proposition \ref{propmorphismfibrationsrelmorphismsites} below). 
\end{enumerate}
\end{remarks}

\subsection{Generalizing Giraud's approach}

In this section, we generalize the approach of Giraud, first in the cartesian setting and then in the general one, with the aim of obtaining a characterization of the morphisms of sites inducing morphisms of relative toposes as those which yield cartesian morphisms of stacks satisfying suitable properties. For this, we shall use the theory of $\eta$-extensions of morphisms of sites, in order to move from non-necessarily cartesian sites to cartesian ones, for which the relative flatness condition can be expressed in terms of finite-limite preservation conditions. This parallels what happens in the \ac absolute', where the condition for a functor to be flat is expressed as the requirement for the associated (left Kan) extension along the Yoneda embedding to preserve finite limits.

Recall that, in \cite{fibered}, a relative Diaconescu's theorem in the cartesian setting was proved:

\begin{thm}[Theorem 3.3  \cite{fibered}]\label{thm:RelativeDiaconescuCartesian}
	Let $({\cal C}, J)$ be a small-generated site, where $\cal C$ is a cartesian category, ${\mathbb D}:{\cal C}^{\textup{op}} \to \Cat$ a cartesian pseudofunctor, $K$ a Grothendieck topology on ${\cal G}({\mathbb D})$ containing Giraud's topology $J_\dcat$, $A:{\cal C} \to \Ftopos$ a cartesian $J$-continuous functor inducing a geometric morphism $f:\Ftopos\to \Sh({\cal C}, J)$. Then, considering $p_{\mathbb D}$ as a comorphism of sites $({\cal G}({\mathbb D}), K) \to ({\cal C}, J)$, we have an equivalence of categories
	\[
	\Topos/{\Sh({\cal C}, J)}([f], [C_{p_{\mathbb D}}])\simeq \Fib_{\cal C}\cartcov((\gbicat(\dcat), K),(\comma{1_\Ftopos}{A} , J_{f}|_{\comma{1_\Ftopos}{A}})),
	\]
	where $\Fib_{\cal C}\cartcov((\gbicat(\dcat), K),(\comma{1_\Ftopos}{A} , J_{f}|_{\comma{1_\Ftopos}{A}}))$\index{$\Fib\cartcov_\cbicat$} is the category of morphisms of fibrations over $\cal C$ which are cartesian at each fibre and cover-preserving.
\end{thm}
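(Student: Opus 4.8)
The plan is to deduce the statement from the classification equivalence of \cite{CaramelloZanfa}, section 4.2 recalled above, by rewriting its ``site side'' first through the universal property of a comma category, and then by reading the ``morphism of sites'' conditions as fibrational ones — this last step being possible because, in the cartesian setting, every category in sight has finite limits.

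First I would apply the classification equivalence to the comorphism $p_{\mathbb D}:({\mathcal G}({\mathbb D}),K)\to({\cal C},J)$ and to the relative topos $f$, obtaining
\[
\Topos\,{/\!/}\,\Sh({\cal C},J)\big([f],[C_{p_{\mathbb D}}]\big)\;\simeq\;\Site\big(({\mathcal G}({\mathbb D}),K),(\Ftopos,J\can_\Ftopos)\big)\big/\,f^{*}\ell_{J}p_{\mathbb D}.
\]
Since $A$ is a cartesian $J$-continuous functor inducing $f$, the ordinary Diaconescu theorem identifies $A$ with $f^{*}\ell_{J}$, so $f^{*}\ell_{J}p_{\mathbb D}=A\circ p_{\mathbb D}$ and the objects on the right are pairs $(G,\psi)$ with $G:({\mathcal G}({\mathbb D}),K)\to(\Ftopos,J\can_\Ftopos)$ a morphism of sites and $\psi\colon G\Rightarrow A\circ p_{\mathbb D}$ a natural transformation, morphisms being natural transformations over $A\circ p_{\mathbb D}$. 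By the universal property of $\comma{1_\Ftopos}{A}$, such a pair is exactly a functor $\bar G\colon{\mathcal G}({\mathbb D})\to\comma{1_\Ftopos}{A}$ with $\pi_{\cal C}\circ\bar G=p_{\mathbb D}$ — that is, a functor \emph{over} ${\cal C}$ — with $\pi_{\Ftopos}\circ\bar G=G$ and $\psi$ read off from the structure maps of the objects in the image; morphisms correspond to the vertical natural transformations over ${\cal C}$. Thus the right-hand slice is isomorphic to the category of functors $\bar G$ over ${\cal C}$ whose composite $\pi_{\Ftopos}\circ\bar G$ is a morphism of sites.

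Next I would translate ``$\pi_{\Ftopos}\circ\bar G$ is a morphism of sites'' into fibrational terms. Both total categories are finitely complete: ${\mathcal G}({\mathbb D})$ because ${\cal C}$ is cartesian and ${\mathbb D}$ is a cartesian pseudofunctor, so its fibres have finite limits, its transition functors preserve them, and the Grothendieck construction then has finite limits built from those of the base by reindexing; and $\comma{1_\Ftopos}{A}$ by Remarks \ref{remarkseta}(e), its limits being computed componentwise. Moreover $p_{\mathbb D}=\pi_{\cal C}\circ\bar G$ and $\pi_{\Ftopos}$ preserve finite limits, and the fibre reindexing of $\comma{1_\Ftopos}{A}$ is pullback along the $A(u)$, which is exact. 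The key lemma is the standard fact that, for a functor over a finitely complete base between two fibrations whose fibres and reindexing functors are finitely complete, preservation of finite limits is equivalent to preservation of cartesian arrows together with fibrewise preservation of finite limits; combined with left exactness of $\pi_{\Ftopos}$ and $p_{\mathbb D}$, this gives that $G=\pi_{\Ftopos}\circ\bar G$ is flat — equivalently, since ${\mathcal G}({\mathbb D})$ is cartesian, left exact — if and only if $\bar G$ is a morphism of fibrations over ${\cal C}$ which is cartesian at each fibre. For the topologies, recall from Theorem \ref{thm:relativesitegeometricmorphism} that a family in $\comma{1_\Ftopos}{A}$ is $J_{f}$-covering precisely when its $\pi_{\Ftopos}$-image is epimorphic in $\Ftopos$, i.e.\ $J\can_\Ftopos$-covering; since $\pi_{\Ftopos}\circ\bar G=G$, the functor $\bar G$ is cover-preserving from $({\mathcal G}({\mathbb D}),K)$ to $(\comma{1_\Ftopos}{A},J_{f}|_{\comma{1_\Ftopos}{A}})$ exactly when $G$ is cover-preserving to $(\Ftopos,J\can_\Ftopos)$. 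As a morphism of sites to a topos with its canonical topology is the same as a cover-preserving flat functor, this identifies the right-hand slice with $\Fib_{\cal C}\cartcov\big(({\mathcal G}({\mathbb D}),K),(\comma{1_\Ftopos}{A},J_{f}|_{\comma{1_\Ftopos}{A}})\big)$.

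The remaining point — which I expect to be the crux — is that the chain of equivalences above passes through the \emph{lax} slice ${/\!/}$, whereas the statement asserts an equivalence with the genuine slice ${/}$; one must therefore show that in the cartesian setting the $2$-cell $\Sh(G)^{*}\circ C_{p_{\mathbb D}}^{*}\Rightarrow f^{*}$ induced by $(G,\psi)$ is automatically invertible, equivalently that the cofinality conditions of Theorem \ref{thmcofinality} hold. Here I would use that, by the limit lemma above, whenever $G$ is a morphism of sites the lifted functor $\bar G$ is automatically a morphism of fibrations (this is where left exactness of $p_{\mathbb D}$ is used), so that Remark \ref{remcof}(c) — a morphism of sites which is moreover a morphism of fibrations satisfies the conditions of Theorem \ref{thmcofinality} — applies and yields the invertibility. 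Assembling all the identifications, the $2$-categorical equivalence restricts to the asserted equivalence of ordinary categories. (An alternative, more symmetric organisation would first observe that $f\simeq C_{\pi_{\cal C}}$ for the relative site $\pi_{\cal C}\colon(\comma{1_\Ftopos}{A},J_{f}|_{\comma{1_\Ftopos}{A}})\to({\cal C},J)$ of the morphism $A$, and then treat the statement as a comparison of two relative sites over $({\cal C},J)$; the direct route above, however, already suffices.)
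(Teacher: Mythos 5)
Your setup is fine: applying the classification equivalence to $p_{\mathbb D}$ and rewriting pairs $(G,\psi)$, $\psi\colon G\Rightarrow A\circ p_{\mathbb D}$, as functors $\bar G\colon{\mathcal G}({\mathbb D})\to\comma{1_\Ftopos}{A}$ over ${\cal C}$ is exactly the paper's own point of departure (note, though, that the paper only recalls this theorem from \cite{fibered} and proves instead its generalizations). The step you yourself flag as the crux, however, is wrong. It is not true that in the cartesian setting every morphism of sites $G\colon({\mathcal G}({\mathbb D}),K)\to(\Ftopos,J\can_\Ftopos)$ with an arbitrary $\psi$ has invertible induced $2$-cell, nor that $\bar G$ is then automatically a morphism of fibrations. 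Take ${\mathbb D}$ the constant terminal pseudofunctor, so ${\mathcal G}({\mathbb D})\cong{\cal C}$, $p_{\mathbb D}=1_{\cal C}$, $K=J_{\mathbb D}=J$; let ${\cal C}=\mathbf{FinSet}$ with the trivial topology, $\Ftopos=\Set$, $A$ the (cartesian, trivially $J$-continuous) inclusion, $G=\Hom(2,-)=(-)^{2}$ and $\psi$ the first projection $(-)^{2}\Rightarrow A$. Then $G$ is a morphism of sites, and $\bar G$ is even left exact on total categories (limits in the comma are componentwise and both $\pi_\Ftopos\bar G=G$ and $\pi_{\cal C}\bar G=1_{\cal C}$ are left exact), yet the naturality squares of $\psi$ are not pullbacks, so $\bar G$ preserves neither cartesian arrows nor the fibre terminal objects, and the induced $2$-cell, whose restriction along $\ell_J$ is $\psi$, is not invertible: this $(G,\psi)$ lies in the lax slice but not in the genuine one. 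The same example refutes your ``key lemma'': for a functor over the base, preservation of total finite limits does \emph{not} imply preservation of cartesian arrows plus fibrewise left exactness (only the converse holds), since total-limit preservation says nothing about fibre terminal objects; and left exactness of $p_{\mathbb D}$ cannot rescue this, as here $p_{\mathbb D}$ is the identity.

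Consequently the real content of the theorem is untouched by your argument: one must single out, inside the lax slice $\simeq\{(G,\psi)\}$, precisely those objects whose $2$-cell is invertible, and show that these are exactly the cover-preserving morphisms of fibrations over ${\cal C}$ which are cartesian at each fibre — a condition strictly stronger than ``$\pi_\Ftopos\circ\bar G$ is a morphism of sites''. What you prove (modulo the false lemma) is only the easy direction, that such a fibrational morphism yields a point of the genuine slice, which is the content of Remark \ref{remcof}(c) and Proposition \ref{propmorphismfibrationsrelmorphismsites}. The missing half is the converse: given a relative geometric morphism $[f]\to[C_{p_{\mathbb D}}]$, one must construct a cartesian-at-each-fibre morphism of fibrations inducing it and verify the two assignments are mutually pseudo-inverse; in this paper's framework this is done not by any automaticity but by explicit constructions — the cofinality criteria of Theorem \ref{thmcofinality}, or the $\eta$-extension $\widetilde{a^{\ast}\ell_K}\circ\eta_{{\mathcal G}({\mathbb D})}$ used as the pseudo-inverse in the proof of Theorem \ref{diaconescufibration} — and your proposal would need an argument of that kind to close the gap.
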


In the absolute setting one takes the left Kan extension along the Yoneda embedding to associate to a functor $A:{\cal C}\to \Ftopos$ with values in a Grothendieck topos $\Ftopos$ a pair of adjoint functors $(L_{A}:[{\cal C}\op, \Set]\to \Ftopos\dashv R_{A}:\Ftopos \to [{\cal C}\op, \Set])$, and understands the flatness of $A$ in terms of the property of the left Kan extension $L_{A}$ to preserve finite limits, that is, to yield a morphism of cartesian sites $([\cbicat\op,\Set], \widehat{J})\to (\Ftopos, J\can_{\Ftopos})$. It is natural to wonder if one can reproduce this method in the relative setting, in order in particular to characterize the condition on a functor to be a relative morphisms of sites in terms of finite-limit preservation conditions of a suitable extension of it. In this section we shall see that it is indeed possible to achieve this.

We recall that we defined an appropriate relative analogue of the Yoneda embedding in the relative setting, provided by the functor $\eta$ of Proposition \ref{propbidense}. The following construction will provide the good notion of extension of a lax morphism of sites over a base one along this functor; applying it in the context of relative sites yields a \ac relative' analogue of the left Kan extension functor operation along the Yoneda embedding.

As in the previous section, let $p:({\cal D}, K)\to ({\cal C}, J)$ and $p':({\cal D}', K')\to ({\cal C}, J)$ two comorphisms of sites, $A: ({\cal D},K) \to ({\cal D}', K')$ a morphism of sites, and $\phi: p' \circ A \Rightarrow p$ a natural transformation. As we have seen, $A$ induces a geometric morphism $\Sh(A):\Sh({\cal D}', K')\to \Sh({\cal D}, K)$ and we have a natural transformation $\Tilde{\phi}:\Sh(A)^{\ast}\circ C_{p}^{\ast}\to C_{p'}^{\ast}$. 

This allows us to define a functor
\[
\tilde{A}:(1_{\Sh({\cal D}, K)} \downarrow C_{p}^{\ast}) \to  (1_{\Sh({\cal D}', K')} \downarrow C_{p'}^{\ast})
\]
as follows: $\tilde{A}(G, F, g:G\to C_{p}^{\ast}(F))=(\Sh(A)^{\ast}(G), F, \Tilde{\phi} \circ \Sh(A)^{\ast}(g))$. We call this functor the \emph{$\eta$-extension of $A$}.

For any object $c$ of ${\cal C}$, we shall denote by $\tilde{A}(c)$ the fiber 
\[
\Sh({\cal D}, K)\slash C_{p}^{\ast}(l_{J}(c)) \to \Sh({\cal D}', K')\slash C_{p'}^{\ast}(l_{J}(c)) 
\]
of the functor $\tilde{A}$ at the object $l_J(c)$. 

The following proposition summarizes the main properties of this construction:

\begin{prop}\label{etaextensionproperties}
	Let $(A, p, p', \phi)$ be as above, and $(1_{\Sh({\cal D}, K)} \downarrow C_p^*)$ and $(1_{\Sh({\cal D'}, K')} \downarrow C_{p'}^*)$ being endowed with their canonical relative topologies. Then:
	\begin{enumerate}[(i)]
		\item The following diagram commutes:
		\[\begin{tikzcd}
			{(1_{\textup{\bf Sh}({\cal D}, K)} \downarrow C_{p}^{\ast})} && {(1_{\textup{\bf Sh}({\cal D}', K')} \downarrow C_{p'}^{\ast})} \\
			{\textup{\bf Sh}({\cal D}, K)} && {\textup{\bf Sh}({\cal D}', K')}
			\arrow["{\tilde{A}}", from=1-1, to=1-3]
			\arrow["{\textup{\bf Sh}(A)^{\ast}}", from=2-1, to=2-3]
			\arrow["{\pi_{\textup{\bf Sh}({\cal D}, K)}}", from=1-1, to=2-1]
			\arrow["{\pi_{\textup{\bf Sh}({\cal D}', K')}}", from=1-3, to=2-3]
		\end{tikzcd}\]
	
		\item The $\eta$-extension $\tilde{A}$ is a morphism of sites.
		
		\item The natural transformation $\Tilde{\phi}$ being an isomorphism, that is ${\bf Sh} (A)$ being a morphism of relative toposes, is exactly characterized by the following equivalent statements:
        \begin{enumerate}[(a)]
            \item The functors $\Tilde{A}(c)$ are cartesians (i.e. $\Tilde{A}$ preserves finite limits \emph{at each fiber}, not just globally).
            \item  The functor $\tilde{A}$ is a morphism of fibrations.
        \end{enumerate}

        \item We have an induced natural transformation $\Phi$: 
        \[\begin{tikzcd}
    	{(1_{\Sh ({\cal D},K)} \downarrow C_p^*}) & ({1_{\Sh ({\cal D}',K')} \downarrow C_p'^*}) \\
    	{\cal D} & {{\cal D}'}
    	\arrow["{\tilde{A}}", from=1-1, to=1-2]
   	\arrow["{\eta_{\cal D}}", from=2-1, to=1-1]
    	\arrow["{\eta_{{\cal D}'}}"', from=2-2, to=1-2]
    	\arrow["A"', from=2-1, to=2-2]
    	\arrow["\Phi"{description}, shorten <=12pt, shorten >=8pt, Rightarrow, from=2-2, to=1-1]
        \end{tikzcd}\]
        This square commutes up to Morita-equivalence, and if $\phi$ is an isomorphism then $\Phi$ also is.
		
		\item The functor $\tilde{A}$ can be characterized, by using the universal property of the comma category $(1_{\textup{\bf Sh}({\cal D}', K')} \downarrow C_{p'}^{\ast})$, by the two functors  $\Sh(A)^*\pi_{\Sh({\cal D},K)}$ and $\pi_{\Sh({\cal C},J)}$, and the natural transformation given in $F \xrightarrow{w} C^*_pF $ by $\Sh (A)^* G \xrightarrow{\Sh(A)^*w} \Sh(A)^*C_p^*F \xrightarrow{\Tilde{\phi}_F} C_{p'}^*F $
		\end{enumerate}
\end{prop}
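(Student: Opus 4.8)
The plan is to dispatch the five items in order of increasing difficulty, almost all of them being a matter of unwinding the definition $\tilde{A}(G,F,g)=(\Sh(A)^{\ast}(G),F,\tilde{\phi}_F\circ\Sh(A)^{\ast}(g))$ together with left-exactness of the inverse images $\Sh(A)^{\ast}$, $C_p^{\ast}$ and $C_{p'}^{\ast}$. Item (i) holds strictly, since the $\Sh({\cal D}',K')$-component of $\tilde{A}(G,F,g)$ is $\Sh(A)^{\ast}(G)=(\Sh(A)^{\ast}\circ\pi_{\Sh({\cal D},K)})(G,F,g)$; and (v) is just the reading of the pair $(\Sh(A)^{\ast}\circ\pi_{\Sh({\cal D},K)},\ \pi_{\Sh({\cal C},J)})$ together with the $2$-cell $\tilde{\phi}\circ\Sh(A)^{\ast}(-)$ through the universal property of the comma category $(1_{\Sh({\cal D}',K')}\downarrow C_{p'}^{\ast})$. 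For (ii): by the argument of Remark~\ref{remarkseta}(e) the comma category $(1_{\Sh({\cal D},K)}\downarrow C_p^{\ast})$ is finitely complete with componentwise limits (the two legs $1_{\Sh({\cal D},K)}$ and $C_p^{\ast}$ preserving finite limits), so it suffices to show that $\tilde{A}$ preserves finite limits and covers. Preservation of finite limits is checked componentwise: the $\Sh({\cal C},J)$-component of $\tilde{A}$ is a projection and its $\Sh({\cal D},K)$-component is $\Sh(A)^{\ast}$, both left exact, while the induced map on the $g$-components is the canonical one by naturality of $\tilde{\phi}$; a finite-limit-preserving functor out of a finitely complete site is flat. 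Cover-preservation follows from (i): by Theorem~\ref{thm:relativesitegeometricmorphism} a $J_{C_p}$-covering sieve is one sent by $\pi_{\Sh({\cal D},K)}$ to an epimorphic family, and $\pi_{\Sh({\cal D}',K')}\circ\tilde{A}=\Sh(A)^{\ast}\circ\pi_{\Sh({\cal D},K)}$ preserves such families since $\Sh(A)^{\ast}$ does.

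The heart of the matter is (iii). The point to keep in mind is that $\tilde{A}$ is \emph{always} left exact \emph{globally} --- that is precisely what was proved in (ii) --- whereas its restriction $\tilde{A}(c)$ to the fibre over $\ell_J(c)$ factors as the left-exact restriction of $\Sh(A)^{\ast}$ to the slice over $C_p^{\ast}(\ell_J c)$ followed by the composition functor $\Sigma_{\tilde{\phi}_{\ell_J c}}$ along $\tilde{\phi}_{\ell_J c}$, and $\Sigma_u$ is left exact exactly when $u$ is invertible (already the terminal object $(C_p^{\ast}\ell_J c,\mathrm{id})$ is carried to $(\Sh(A)^{\ast}C_p^{\ast}\ell_J c,\tilde{\phi}_{\ell_J c})$, which is terminal in the target slice iff $\tilde{\phi}_{\ell_J c}$ is invertible). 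Hence $\tilde{A}(c)$ is cartesian for every $c$ iff $\tilde{\phi}_{\ell_J c}$ is an isomorphism for every $c$; and since $\Sh(A)^{\ast}\circ C_p^{\ast}$ and $C_{p'}^{\ast}$ are cocontinuous and every $J$-sheaf is a colimit of representables, this is equivalent to $\tilde{\phi}$ being an isomorphism --- this is (a). For (b): $\tilde{A}$ commutes strictly with the two projections to $\Sh({\cal C},J)$ (it does not touch the $F$-component), so being a morphism of fibrations reduces to preservation of cartesian arrows; a cartesian arrow over $v\colon F\to F'$ presents its source as a pullback of its target along $C_p^{\ast}(v)$, and --- using once more that $\Sh(A)^{\ast}$ preserves pullbacks --- $\tilde{A}$ sends it to a cartesian arrow exactly when the comparison built from $\tilde{\phi}_F$ and $\tilde{\phi}_{F'}$ is invertible; testing this on the cartesian liftings of the maps $F\to 1$ forces $\tilde{\phi}_F$ to be invertible for all $F$, while conversely invertibility of $\tilde{\phi}$ makes all such comparisons invertible.

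For (iv) one builds $\Phi$ componentwise. Evaluating the two composites at an object $d$ gives $\tilde{A}(\eta_{\cal D}(d))=(\Sh(A)^{\ast}\ell_K(d),\ \ell_J(p(d)),\ \tilde{\phi}_{\ell_J p(d)}\circ\Sh(A)^{\ast}(x_d))$ on one side and $\eta_{{\cal D}'}(A(d))=(\ell_{K'}(A(d)),\ \ell_J(p'A(d)),\ x_{A(d)})$ on the other; the canonical isomorphism $\Sh(A)^{\ast}\ell_K(d)\cong\ell_{K'}(A(d))$ (from $\lan_{A\op}\yo_{\cal D}\cong\yo_{{\cal D}'}A$ and the description of $\Sh(A)^{\ast}$) together with $\ell_J(\phi_d)\colon\ell_J(p'A(d))\to\ell_J(p(d))$ assembles --- via a diagram chase comparing the definitions of $x_d$, $x_{A(d)}$ and $\tilde{\phi}$ --- into a morphism $\eta_{{\cal D}'}(A(d))\to\tilde{A}(\eta_{\cal D}(d))$ of $(1_{\Sh({\cal D}',K')}\downarrow C_{p'}^{\ast})$, natural in $d$; these are the components of $\Phi$. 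As the $\Sh({\cal D}',K')$-component of each $\Phi_d$ is an isomorphism while its $\Sh({\cal C},J)$-component $\ell_J(\phi_d)$ carries no weight for the canonical relative topology $J_{C_{p'}}$ (cf.\ the discussion following Theorem~\ref{thm:relativesitegeometricmorphism}), $\Phi$ is a pointwise local isomorphism, i.e.\ the square commutes up to Morita-equivalence; and if $\phi$ is an isomorphism then so is each $\ell_J(\phi_d)$, so both components of $\Phi_d$ are invertible and $\Phi$ is a natural isomorphism.

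I expect (iii) to be the main obstacle. Everything there hinges on the contrast between the global and the fibrewise left-exactness of $\tilde{A}$, concentrated in the fact that the composition functors $\Sigma_{\tilde{\phi}_{\ell_J c}}$ occurring in the fibre functors $\tilde{A}(c)$ fail to be left exact exactly to the extent that $\tilde{\phi}$ fails to be invertible; the bookkeeping with cartesian arrows needed for (b) is where the diagram chasing is heaviest. The chase in (iv) relating $x_d$, $x_{A(d)}$ and $\tilde{\phi}$ is the other delicate point, but it becomes routine once the componentwise descriptions of $\eta$ (Proposition~\ref{propbidense}) and of $\tilde{\phi}$ on representables are in hand.
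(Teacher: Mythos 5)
Your proposal is correct and follows essentially the same route as the paper: componentwise computation of limits and covers in the comma sites for (i), (ii) and (v), reduction of the invertibility of $\Tilde{\phi}$ to generators with the fibrewise terminal object (equivalently, the cartesian lifts of $F\to 1$) detecting it and pasting of pullbacks giving the converse for (iii), and construction of $\Phi_d$ from the canonical isomorphism $\Sh(A)^{\ast}\ell_K(d)\cong \ell_{K'}(A(d))$ together with $\phi_d$, with Morita-equivalence obtained because the $\Sh({\cal D}',K')$-component is invertible and the dense projection induces an equivalence, for (iv). Your reformulations (the factorization of $\tilde{A}(c)$ through the composition functor $\Sigma_{\tilde{\phi}_{\ell_Jc}}$, and carrying out the chase for $\Phi$ at the sheaf rather than presheaf level) are only cosmetic variants of the paper's argument.
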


\begin{proof}
	\begin{enumerate}[(i)]
        \item Immediate checking by the fact that the first component of the action of $\Tilde{A}$ is given by the action of $\Sh (A)^*$ on the first component.
 
	    \item By definition, the covering sieves in the mentioned comma categories are given by those whose first components are covering. But the first components of their image by $\Tilde{A}$ is just their image by $\Sh (A)^*$, which is cover preserving, and so $\Tilde{A}$ is cover preserving. Since the identity functors, $C_p^*$ and $C_{p'}^*$ are finite limit preserving, the comma categories are finitely complete, and their limits are computed component-wise. Therefore, the flatness condition is just the preservation of limits by $\Tilde{A}$, which is immediate by their component-wise computation and the finite limit-preservation of $\Sh(A)^*$, $C_p^*$ and $C_{p'}^*$. 

        \item \begin{enumerate}[(a)]
            \item Recall that we can reduce to check if $\Tilde{\phi}$ is an isomorphism at the level of generators. Note also that the terminal object of the fiber at $c$ of the source fibration $\Sh({\cal D}, K)\slash C_{p}^{\ast}(l_{J}(c))$ is the identity on $l_{J}(c)$, which is sent on $\Tilde{\phi}_c$ by $\Tilde{A}$. Hence, the preservation of the fiberwise finite limits by $\Tilde{A}(c)$ implies that this terminal object is sent to a terminal one, that is: $\Tilde{\phi}_c$ is an isomorphism. Conversely, if the $\Tilde{\phi}_c$ are isomorphisms, the finite limit preservation at each fiber is given by the preservation of finite limits by $\Sh(A)^*$. 
            \item If $\Tilde{A}$ is a morphism of fibrations, since it respects finite limits after (ii), it hence preserves them \emph{fiberwise}. In particular the terminal objects on the fibers; \emph{i.e.} $\Tilde{\phi}_c$ are isomorphisms. Conversely, recall that the cartesian arrows of the source and target fibrations are the represented that induce pullback squares. If $\Tilde{\phi}$ is an isomorphism, it can be easily seen that $\Tilde{A}$ sends these cartesian squares to cartesian squares, because $\Tilde{\phi}$ induces isomorphisms of pullback diagrams. Therefore, it is a morphism of \emph{fibrations}.
        \end{enumerate}

        \item This property can already be seen at the level of presheaves. Let $\eta^p$ be the unit (at the level of presheaves) of the adjunction $(-\circ p^{op}) \vdash \lan_{p^{op}}$. If we restrict this natural transformation to the representable presheaves, we have an explicit description: $\eta_p \yo_{D}(d): \yo_{D}(d) \to \mathcal{C} ( p - , pd ) $ is the evaluation at $1_{pd}$. 

    The natural transformation $\Phi$ thus have to be between $\lan_{A^{op}} \eta^p \yo_{\cal D} \circ  \Tilde{\phi} \yo_{\cal C} p$ and $\eta^{p'}\yo{\cal D'}A$. On an object $d$ of $\cal D$, it is given by: 
\[\begin{tikzcd}
	{\yo_{\cal D'}(Ad)} && {{\cal C}(p'(-),p'(Ad))} \\
	{\lan_{A^{op}}\yo_{\cal D}} & {\lan_{A^{op}}{\cal C}(p(-),p(d))} & {{\cal C}(p'(-),p(d))}
	\arrow["{\eta^{p'}(Ad)}", from=1-1, to=1-3]
	\arrow["\wr"', from=1-1, to=2-1]
	\arrow["{\phi_d \circ -}", from=1-3, to=2-3]
	\arrow["{\lan_{A^{op}}(\eta^p(d))}"', shift right, from=2-1, to=2-2]
	\arrow["{\tilde{\phi}_{\yo_{\cal C}(p(d))}}"', from=2-2, to=2-3]
\end{tikzcd}\] where we use the identifications $\lan_{A^{op}}\yo_{\cal D} \simeq \yo_{\cal D'}A$. The higher raw is $\eta_{\cal D'} \circ A (d)$, and the lower row is $\tilde{A} \circ \eta_{\cal D} (d)$. The vertical arrows constitute $\Phi_d$, as the commutation can be checked: the higher path sends $1_{Ad}$ to $1_{p'(Ad)}$, and then to $\phi_d$, and for the lower path we can use the presentation of $\lan_{A^{op}}(\eta^p(d))$ as a colimit, as in the cofinality part, which also gives $\phi_d$ after a quick computation. 

We note that, when $\phi$ is an isomorphism, the post-composition with $\phi_d$ also is, and so we have a natural isomorphism $\eta_{\cal D'} \circ A \simeq \tilde{A} \circ \eta_{\cal D}$. But, even if $\phi$ is not an isomorphism, we can see that, at the topos level, the following diagram commutes:

\[\begin{tikzcd}
	&& {\Sh(\cal D' , K')} \\
	{\Sh ((1_{\Sh(\cal D,K)} \downarrow C_p^*),J_{C_p})} && {\Sh ((1_{\Sh(\cal D',K')} \downarrow C_{p'}^*),J_{C_{p'}})} \\
	\\
	{\Sh(\cal D,K)} && {\Sh(\cal D',K')}
	\arrow["{\Sh(\eta_{\cal D})^*}", from=4-1, to=2-1]
	\arrow["{\Sh(A)^*}"', from=4-1, to=4-3]
	\arrow["{\Sh(\eta_{\cal D'})^*}"', from=4-3, to=2-3]
	\arrow["{\Sh(\tilde{A})^*}", from=2-1, to=2-3]
	\arrow["{\Sh(\pi_{\cal D'})^*}", from=2-3, to=1-3]
\end{tikzcd}\]
Indeed, the first components of the actions of the two paths of the square are obviously the same on the generators, and so the projection makes them commute. Now, $\Sh (\pi_{\cal D'})$ is an equivalence, and so the square commutes up to isomorphism. 
     \item The universal property of the comma category $(1_{\Sh ({\cal D'}, K')} \downarrow C_{p'}^*)$ says that a functor going into this category is the same as a diagram of this form:

\[\begin{tikzcd}
	\bullet && {\Sh({\cal C},J)} \\
	\\
	{\Sh({\cal D'},K')} && {\Sh({\cal D'},K')}
	\arrow["{1_{\Sh({\cal D'},K')}}", from=3-1, to=3-3]
	\arrow["{C_{p'}^*}"', from=1-3, to=3-3]
	\arrow[from=1-1, to=3-1]
	\arrow[from=1-1, to=1-3]
	\arrow[Rightarrow, from=3-1, to=1-3]
\end{tikzcd}\]
    The characterisation of $\tilde{A}$ is the data of the two functors is hence given by $\Sh (A)^*\pi_{\Sh({\cal D},K)}$ and $\pi_{\Sh({\cal C},J)}$, the transformation given in $F \xrightarrow{w} C^*_pG $ by $\Sh (A)^* F \xrightarrow{\Sh(A)^*w} \Sh(A)^*C_p^*G \xrightarrow{\Tilde{\phi}_G} C_{p'}^*G $.    
	\end{enumerate}

\end{proof}	

\begin{prop}\label{morphsitesinducing}
In the same setting as Proposition \ref{etaextensionproperties}, the functors $\Tilde{A}(c):  \Sh({\cal D}, K)\slash C_{p}^{\ast}(l_{J}c) \to \Sh({\cal D'}, K')\slash C_{p'}^{\ast}(l_{J}c)$ are induced by the $A_c: (p \downarrow c , K_c) \to (p' \downarrow c , K'_c)$ in the usual way: $\Tilde{A}(c) \simeq a_{K'_c} \lan_{A_c^{op}} i_{K_c}$. 
\end{prop}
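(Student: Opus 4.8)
Here is my plan.

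\bigskip

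The plan is to recognise both $\tilde{A}(c)$ and $a_{K'_c}\lan_{A_c\op}i_{K_c}$ as cocontinuous (colimit-preserving) functors which agree on representables, and then to invoke the fact that a cocontinuous functor out of a Grothendieck topos is determined up to isomorphism by its restriction to a dense generating family --- here the representable sheaves --- since every sheaf is canonically the colimit of the representables mapping to it. Throughout I use the identifications $\Sh({\cal D},K)\slash C_p^*(\ell_J c)\simeq\Sh(p\downarrow c,K_c)$ and $\Sh({\cal D}',K')\slash C_{p'}^*(\ell_J c)\simeq\Sh(p'\downarrow c,K'_c)$ recalled in Subsection~\ref{yonedarel}, realised respectively by the restrictions $\xi_p^c$ and $\xi_{p'}^c$ of the relative Yoneda bimorphisms of Proposition~\ref{propdualcomma} to the fibres over $c$; under these, the representable sheaf attached to an object $(d,v\colon p(d)\to c)$ is $\xi_p^c(d,v)=(\ell_K(d),\ell_J(c),\widehat{v})$, where $\widehat{v}=a_K(ev_v)$ is the $K$-sheafification of the morphism $ev_v\colon\yo_{\cal D}(d)\to{\cal C}(p(-),c)=p^*\yo_{\cal C}(c)$ classifying $v$, the presheaf ${\cal C}(p(-),c)$ being identified with $C_p^*(\ell_J c)$ after sheafification because $p$ is a comorphism.

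First I would check that $A_c$ is cover-preserving: a $K_c$-covering sieve on $(d_0,v_0)$ projects to a $K$-covering sieve on $d_0$, the functor $A_c$ acts on underlying arrows exactly as $A$, which is cover-preserving, and $K'_c$-coverings are detected on the projection to ${\cal D}'$. Hence $(-\circ A_c\op)$ restricts to a functor $\Sh(p'\downarrow c,K'_c)\to\Sh(p\downarrow c,K_c)$, which is right adjoint to $a_{K'_c}\lan_{A_c\op}i_{K_c}$; so the latter is cocontinuous, and $\lan_{A_c\op}\yo\simeq\yo A_c$ shows that it sends the representable $\ell_{K_c}(d,v)$ to $\ell_{K'_c}(A_c(d,v))$.

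Second I would check that $\tilde{A}(c)$ is cocontinuous and has the same values on representables. For cocontinuity: unwinding the definition, $\tilde{A}(c)$ is the composite of the inverse-image functor $\Sh({\cal D},K)\slash C_p^*(\ell_J c)\to\Sh({\cal D}',K')\slash\Sh(A)^*(C_p^*(\ell_J c))$ of the base change of $\Sh(A)$ along $C_p^*(\ell_J c)\to 1$ with the functor $\Sh({\cal D}',K')\slash\Sh(A)^*(C_p^*(\ell_J c))\to\Sh({\cal D}',K')\slash C_{p'}^*(\ell_J c)$ given by post-composition with $\tilde{\phi}_{\ell_J c}$ (left adjoint to pullback along it); both are cocontinuous. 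For the values on representables, $\tilde{A}(c)(\xi_p^c(d,v))=(\Sh(A)^*(\ell_K(d)),\ell_J(c),\tilde{\phi}_{\ell_J c}\circ\Sh(A)^*(\widehat{v}))$; since $A$ is a morphism of sites one has $\Sh(A)^*(\ell_K(d))\simeq\ell_{K'}(A(d))$ and $\Sh(A)^*\circ a_K\simeq a_{K'}\circ\lan_{A\op}$ on presheaves, so $\Sh(A)^*(\widehat{v})\simeq a_{K'}(\lan_{A\op}(ev_v))$, while $\tilde{\phi}_{\ell_J c}$ is, by its construction, identified with $a_{K'}(\psi_{\yo_{\cal C}(c)})$ where $\psi\colon\lan_{A\op}\circ p^*\Rightarrow p'^*$ is the mate of $\phi^*=(-\circ\phi\op)\colon p^*\Rightarrow A^*\circ p'^*$. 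A short computation with the mate --- using naturality of $\psi$, the factorisation $ev_v=p^*(\yo_{\cal C}(v))\circ ev_{1_{p(d)}}$, and the fact that the transpose of $\psi$ is $\phi^*$ (which by Yoneda pins down $\psi_{\yo_{\cal C}(p(d))}\circ\lan_{A\op}(ev_{1_{p(d)}})=ev_{\phi_d}$) --- yields $\psi_{\yo_{\cal C}(c)}\circ\lan_{A\op}(ev_v)=ev_{v\circ\phi_d}$, hence $\tilde{\phi}_{\ell_J c}\circ\Sh(A)^*(\widehat{v})\simeq a_{K'}(ev_{v\circ\phi_d})$, which is exactly the structure morphism of $\xi_{p'}^c(A(d),v\circ\phi_d)=\xi_{p'}^c(A_c(d,v))$. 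Naturality in $(d,v)$ being immediate, the two cocontinuous functors $\tilde{A}(c)$ and $a_{K'_c}\lan_{A_c\op}i_{K_c}$ agree on representables, and therefore $\tilde{A}(c)\simeq a_{K'_c}\lan_{A_c\op}i_{K_c}$.

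The step I expect to be the main obstacle is the last computation, namely identifying $\tilde{\phi}_{\ell_J c}\circ\Sh(A)^*(\widehat{v})$ with $a_{K'}(ev_{v\circ\phi_d})$: this forces one to keep track of the canonical isomorphisms built into the construction of $\tilde{\phi}$ --- those expressing that $A$ is $(K,K')$-continuous, so that $\Sh(A)^*$ may be replaced by sheafified left Kan extension, and that $p,p'$ are comorphisms, so that precomposition along them commutes with sheafification on the source --- and to check that they are compatible with the identification $C_p^*(\ell_J c)\simeq a_K({\cal C}(p(-),c))$ used in the description of $\xi_p^c$. The remaining ingredients --- the factorisation of $\tilde{A}(c)$ through a base change and a post-composition functor, the cover-preservation of $A_c$, and the density of representables in a sheaf topos --- are formal.
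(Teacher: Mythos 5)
Your strategy coincides with the paper's (check agreement on representables and conclude by cocontinuity and density of the representables), and your central computation --- identifying $\tilde{\phi}_{\ell_J c}\circ\Sh(A)^*(\widehat{v})$ with $a_{K'}(ev_{v\circ\phi_d})$ via the mate of $\phi^*$ --- is exactly the verification the paper carries out (there, at the presheaf level after reducing to the trivial topology). However, there is a genuine gap in your treatment of the right-hand functor $a_{K'_c}\lan_{A_c\op}i_{K_c}$: you derive both its cocontinuity and its value on representables from the claim that, $A_c$ being cover-preserving, the restriction $(-\circ A_c\op)$ carries $K'_c$-sheaves to $K_c$-sheaves. Cover-preservation does not imply this; preservation of sheaves under restriction is the separate property of continuity, and the two notions are independent for general functors. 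At this stage you may not appeal to $A_c$ being a morphism of sites either: by Proposition \ref{propmorphimrelmorphism}, covering-flatness of the $A_c$'s is precisely the relative filteredness equivalent to $(A,\phi)$ being a morphism of sites over $({\cal C},J)$, which is \emph{not} assumed in Proposition \ref{morphsitesinducing} (only $A$ is a morphism of sites, $\phi$ arbitrary). Note also that $i_{K_c}(\ell_{K_c}(d,v))$ is the \emph{sheafified} representable, so $\lan_{A_c\op}\circ\yo\simeq\yo\circ A_c$ by itself does not compute $a_{K'_c}\lan_{A_c\op}i_{K_c}(\ell_{K_c}(d,v))$; one needs $a_{K'_c}\lan_{A_c\op}$ to invert the unit $\yo(d,v)\to i_{K_c}\ell_{K_c}(d,v)$, which is again a continuity-type statement carrying real content.

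The facts you need are true, but they use the flatness of $A$, not merely cover-preservation: under the identification $\Psh(p\downarrow c)\simeq\Psh({\cal D})/p^*(\yo_{\cal C}(c))$, the unit $\yo(d,v)\to i_{K_c}\ell_{K_c}(d,v)$ has underlying map a $K$-local isomorphism of presheaves on $\cal D$, and $a_{K'}\lan_{A\op}$ inverts $K$-local isomorphisms precisely because $A$ is a morphism of sites (hence continuous). The cleanest repair --- which is in effect what the paper's reduction to the trivial topology accomplishes --- is to avoid handling $a_{K'_c}\lan_{A_c\op}i_{K_c}$ on its own: show that the two cocontinuous functors on presheaves, $\tilde{A}(c)\circ a_{K_c}$ and $a_{K'_c}\circ\lan_{A_c\op}$, agree on the genuine representables $\yo(d,v)$ (this is your mate computation, performed before sheafifying, exactly as in the paper's proof where the image of $ev_u$ is identified with evaluation at $u\circ\phi_d$), hence are isomorphic; then precompose with $i_{K_c}$ and use $a_{K_c}i_{K_c}\simeq 1$ to obtain $\tilde{A}(c)\simeq a_{K'_c}\lan_{A_c\op}i_{K_c}$. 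With that adjustment (or with an explicit proof that $A_c$ is continuous, which requires the covering-flatness of $A$ and the naturality of $\phi$, not just cover-preservation), your argument is complete and essentially identical to the paper's.
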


\begin{proof}
Recall that, in respect with the site-characterization given in the begging of \ref{yonedarel}, the sheafification $a_{K_c}: \Hat{{\cal D}} \slash {\cal C}(p-,c) \to \Tilde{\cal D}\slash C_p^*l_Jc$ acts by sending an arrow $u: P \to {\cal C}(p-,c)$ to its sheafification $a_K(u): a_K(P) \to a_K({\cal C}(p-,c))$ and the inclusion is given by the pullback along the unit of $i_K \vdash a_K$. We can reduce to the case of the trivial topology, the general case can be deduced from these considerations. 
The following diagram, where vertical arrows are the respective Yoneda embeddings, and $\widehat{A}(c)$ is $\Tilde{A}(c)$ for the trivial topology, is commutative:

\[\begin{tikzcd}
	({\widehat{\cal D} \downarrow {\cal C}(p-,c)}) & ({\widehat{\cal D'} \downarrow {\cal C}(p'-,c)}) \\
	{(p \downarrow c)} & {(p' \downarrow c)}
	\arrow[from=2-1, to=1-1]
	\arrow["{\widehat{A}(c)}", from=1-1, to=1-2]
	\arrow["{A_c}"', from=2-1, to=2-2]
	\arrow[from=2-2, to=1-2]
\end{tikzcd}\]

Indeed, in virtue of the discussion in the begining of \ref{yonedarel} , the Yoneda embedding from $(p \downarrow c)$ sends an arrow $u: p(d) \to c$ to the presheaf $\yo_{\cal D}(d) \to {\cal C}(p-,c)$ evaluating the identity on $u$. Then, it is sent by $\widehat{A}(c)$ to $\yo_{\cal D'}(A(d)) \to \colim_{v: p(d) \to c}\yo_{\cal D'}(A(d)) \to {\cal C}(p'-,c)$ where the first arrow sends the identity to the class of the identity in the summand indexed by $u$, and the second arrow sends it to its pre-composition with $\phi_c$. The other path of the diagram gives: an arrow $u: p(d) \to c$ is sent to $u\phi_c$ by $A_c$, and then to the evaluation of the identity of $\yo_{\cal D'}(A(d))$ to this arrow in $\yo_{\cal D'}(A(d))$, which is the same as the first computation. Hence, since $\widehat{A}(c)$ commutes with colimits and agrees with the left Kan extension on generators, it is in fact the left Kan extension of $A_c$. 
\end{proof}

The following result is a relative generalisation of Diaconescu's theorem (in its statement, for lightening the notation, we denote by $J_f$ the topology $J_{f}|_{(1_{\cal F}\downarrow f^*l_J)}$)

\begin{thm}\label{thm:RelativeDiaconescumorphismsrelativesites}
Let $({\cal C}, J)$ be a small-generated site, $p:({\cal D}, K)\to ({\cal C}, J)$ a comorphism of sites and a geometric morphism $f:{\cal F}\to \Sh({\cal C}, J)$. Then we have an equivalence of categories
\[
\textup{\bf Geom}_{\Sh({\cal C}, J)}([f], [C_{p}])\simeq \textup{\bf MorSitesOver}_{({\cal C}, J)}(({\cal D}, K), (1_{\cal F}\downarrow f^*l_J), J_{f}),
\]
where $\textup{\bf MorSitesOver}_{({\cal C}, J)}(({\cal D}, K), (1_{\cal F}\downarrow f^*l_J), J_{f})$ is the category of morphisms of sites over $({\cal C}, J)$ from $({\cal D}, K)$ to $((1_{\cal F}\downarrow f^*l_J), J_{f})$ and natural transformations between them.
\end{thm}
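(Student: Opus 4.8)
The plan is to assemble the equivalence from the pieces already set up in Sections 2 and 3, using the $\eta$-extension as the bridge between the site-level data over $({\cal C},J)$ and the geometric morphisms over $\Sh({\cal C},J)$. Recall that, by Theorem~\ref{thm:relativesitegeometricmorphism}, the target topos $\cal F$ is presented as $\Sh((1_{\cal F}\downarrow f^*l_J), J_f)$ with its structure comorphism $\pi_{\cal C}$, so that $[f] = [C_{\pi_{\cal C}}]$; and by Proposition~\ref{propbidense} the functor $\eta_p:({\cal D},K)\to ((1_{\Sh({\cal D},K)}\downarrow C_p^*), J_{C_p})$ is a dense bimorphism of sites, whence a Morita equivalence. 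First I would describe the functor in one direction: given a morphism of sites over $({\cal C},J)$, that is a pair $(A,\phi)$ with $A:({\cal D},K)\to ((1_{\cal F}\downarrow f^*l_J), J_f)$ a morphism of sites and $\phi$ an isomorphism $\pi_{\cal C}\circ A\Rightarrow p$ such that $C_{\pi_{\cal C}}\circ \Sh(A)\simeq C_p$, send it to the induced relative geometric morphism $\Sh(A):[C_p]\to[C_{\pi_{\cal C}}]=[f]$ — recording the $2$-cell data via the natural transformation $\Tilde\phi$, which Definition~\ref{defmorphismrelsites} and the discussion following it guarantee is an isomorphism precisely because $\phi$ is.

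\textbf{The other direction and the core argument.} Conversely, given a relative geometric morphism $g:[f]\to[C_p]$, I would produce a morphism of sites over $({\cal C},J)$ by composing the relative Yoneda functor $\eta_p$ with a suitable functor out of the relative site of $C_p$. The key observation is that $\cal F$ carries the canonical relative site and, via the equivalence $\cal F\simeq \Sh((1_{\cal F}\downarrow f^*l_J),J_f)$, the comma category $(1_{\cal F}\downarrow f^*l_J)$ plays the role of ``$\cal F$ seen relatively over $\cal C$''; a geometric morphism over $\Sh({\cal C},J)$ then amounts, by the composite of the bi-density of $\eta_p$ with the universal property of this comma category, to a morphism of sites $({\cal D},K)\to ((1_{\cal F}\downarrow f^*l_J),J_f)$ together with a natural transformation to $p$. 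Concretely: from $g$ and its structural $2$-cell one extracts $g^*C_p^*\Rightarrow f^*$, transposes it as in the slice-category correspondence recalled at the start of Section~3.1 to get a natural transformation $g^*\ell_K\Rightarrow f^*\ell_J p$, and then uses the universal property of $(1_{\cal F}\downarrow f^*l_J)$ — exactly the triangle displayed before Definition~\ref{defmorphismrelsites} — to read this off as a functor $A_g:{\cal D}\to(1_{\cal F}\downarrow f^*l_J)$ over $\cal C$ together with the required natural transformation $\phi_g$. One checks $\phi_g$ is an isomorphism using that $g^*$ is exact (it sends the terminal object to the terminal object in each fibre, which by Proposition~\ref{etaextensionproperties}(iii)(a) is the isomorphism statement for $\Tilde\phi$), and that $A_g$ is a morphism of sites using that it is, up to the density of $\eta_p$, the $\eta$-extension restricted along $\eta_p$, invoking Proposition~\ref{etaextensionproperties}(ii) and (v).

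\textbf{Showing the two constructions are mutually quasi-inverse.} To close the equivalence I would show the round-trips are naturally isomorphic to the identity. Starting from $(A,\phi)$, passing to $\Sh(A)$ and back recovers $A$ up to isomorphism because $\eta_p$ is a Morita equivalence: the composite $\Sh(\tilde A)\circ \Sh(\eta_p)\simeq \Sh(\eta_{(1_{\cal F}\downarrow f^*l_J)})\circ \Sh(A)$ of Proposition~\ref{etaextensionproperties}(iv) (the topos-level square, which commutes even when $\phi$ is not invertible, though here it is) together with the bi-density of the $\eta$'s pins down $A$ uniquely; the $2$-cells match by the componentwise computation of $\Phi$ in the proof of Proposition~\ref{etaextensionproperties}(iv). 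Starting from a relative geometric morphism $g$, building $A_g$ and then $\Sh(A_g)$ returns $g$ because the construction is exactly the inverse of the slice-category correspondence of \cite{CaramelloZanfa} Section~4.2 recalled above, now cut down to the invertible $2$-cells; here one uses Proposition~\ref{morphsitesinducing} to identify the fibrewise components $\Tilde{A_g}(c)$ with $a_{K'_c}\lan_{(A_g)_c^{op}} i_{K_c}$ and hence to recognize $g^*$ on each slice. Functoriality on morphisms (natural transformations of morphisms of sites going to $2$-cells of geometric morphisms and back) is then a routine naturality check.

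\textbf{Main obstacle.} The delicate point is the interchange between ``commutes up to Morita equivalence'' and ``commutes up to isomorphism'': Proposition~\ref{etaextensionproperties}(iv) only gives that the relevant square of $\eta$-functors commutes up to Morita equivalence at the site level, and one must be careful that after applying $\Sh(-)$ — where $\Sh(\eta_p)$ and $\Sh(\pi_{\cal D'})$ become genuine equivalences — this upgrades to a coherent isomorphism of geometric morphisms compatible with the structural $2$-cells over $\Sh({\cal C},J)$, and that all of this is natural in the morphism-of-sites argument. Making this bookkeeping of $2$-cells precise (rather than just the objects of the categories matching) is the real work; the rest is an application of Diaconescu-type reasoning fibrewise via Proposition~\ref{morphsitesinducing} and the already-established properties of $\eta$ and $\tilde A$.
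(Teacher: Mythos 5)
Your proposal follows essentially the same route as the paper's own proof, which simply reruns the argument of Theorem \ref{diaconescufibration} with ``morphism of sites over $({\cal C},J)$'' in place of ``morphism of fibrations'': one direction is essentially the definition of a morphism of sites over $({\cal C},J)$, the other is the $\eta$-extension of $g^{\ast}\ell_K$ precomposed with the relative Yoneda functor (equivalently, your transposition-plus-universal-property description of the same functor), with the round trips handled by the density/Morita-equivalence of the $\eta$'s and Proposition \ref{etaextensionproperties}(iv). Only correct the orientation slip in your first paragraph: $\Sh(A)$ is a relative geometric morphism $[f]\to[C_p]$ (the compatibility being $C_p\circ\Sh(A)\simeq f$), not $[C_p]\to[f]$; the rest of your outline already uses the correct direction, and the condition you describe as ``$\phi_g$ is an isomorphism'' is automatic from the comma-category universal property, the real check being $\Sh(A_g)\cong g$ over $\Sh({\cal C},J)$, which you address in the round-trip step.
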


\begin{proof}
    The arguments are the same as those used in the proof of Theorem \ref{diaconescufibration}, except for the condition on the morphisms in the right-hand side to be morphisms of sites over $({\cal C}, J)$ instead of morphisms of fibrations. 
\end{proof}

\begin{remark}
If $p$ is a fibration then the condition for the functors on the right-hand side of the equivalence to be morphisms of sites over $({\cal C}, J)$ amounts precisely to the requirement that they be morphisms of relative sites (cf. Theorem  \ref{diaconescufibration}). Proposition \ref{etaextensionproperties} shows that, in the absence of this condition, the correct replacement is the requirement for the $\eta$-extension of the given functor to yield a morphism of fibrations. In fact, more generally, a morphism of sites between two fibrations does not need to be a morphism of fibrations in order to induce a morphism of relative toposes, only its $\eta$-extension needs to. 
\end{remark}

\subsubsection{Relative filteredness}

In the ordinary topos-theoretic setting, morphisms of sites are characterized in terms of flat functors, and these in turn can be characterized as \emph{filtering functors}; this latter characterization is important, since the notion of filtering functor can be axiomatized within geometric logic (over the signature of the theory of functors on the given category). The equivalence between flatness and filteredness is a non-trivial theorem; the remarkable fact underlying this equivalence is that the isomorphism conditions for colimits arising from the finite-limit preservation conditions for the left Kan extension of the given functor along the Yoneda embedding (which express flatness) are in fact (globally, though not individually) equivalent to the condition for a number of families associated with the given functor to be epimorphic.  

It is interesting, notably in relation to our goal of developing a fibrational semantics allowing one to lift classical topos-theoretic notions and results from the ordinary to the relative setting, to try to reformulate the conditions for a functor to be a relative morphism of sites in terms of certain arrows of toposes being epimorphisms, and then express such conditions at the level of (relative) sites.

\begin{prop}\label{propcond1relmorphismofsites}
Let $(A, p, p', \phi)$ be as above. Then the following conditions are equivalent:

\begin{enumerate}[(i)]	
	\item For any object $E$ of $\Sh({\cal C}, J)$, the canonical arrow
	 \[ 
 	(\Tilde{\phi}, 1_{E}):\tilde{A}(C_{p}^{\ast}(E), E, 1_{C_{p}^{\ast}(E)})=(\Sh(A)^{\ast}C_{p}^{\ast}(E), E, 1_{\Sh(A)^{\ast}C_{p}^{\ast}(E)}) \to (C_{p'}^{\ast}(E), E, 1_{C_{p'}^{\ast}(E)})
	\]
	is covering for the canonical topology of the fiber (that is, its first component is an epimorphism in the topos $\Sh({\cal D}', K')$).
	
	\item For any object $c\in {\cal C}$, the arrow 
	\[
	(\Tilde{\phi}, 1_{l_Jc}):\tilde{A}(C_{p}^{\ast}l_Jc, l_Jc, 1_{C_{p}^{\ast}l_Jc})=(\Sh(A)^{\ast}C_{p}^{\ast}l_Jc, l_Jc, 1_{\Sh(A)^{\ast}C_{p}^{\ast}l_Jc}) \to (C_{p'}^{\ast}l_Jc, l_Jc, 1_{C_{p'}^{\ast}l_Jc})
	\]
	is covering for the canonical relative topology (that is, the arrow  
	\[
	\Tilde{\phi}_{l_Jc}:\Sh(A)^{\ast}C_{p}^{\ast}l_Jc \to  C_{p'}^{\ast}l_Jc
	\]
	is an epimorphism in the topos $\Sh({\cal D}', K')$).
	
	\item For any $c\in {\cal C}$, the unique arrow 
	\[
	\tilde{A}(c)  (!_{\Sh({\cal D}, K)\slash C_{p}^{\ast}l_{J}c})\to !_{{\Sh({\cal D}', K')\slash C_{p'}^{\ast}l_{J}c}}
	\]
	is an epimorphism.
			
	\item For any $c\in {\cal C}$, the functor $A_{c}:(p\downarrow c)$ to $(p'\downarrow c)$ satisfies the first condition in the definition of morphism of sites; that is, for any object $d'$ of $\cal D$ with an arrow $\chi:p'(d')\to c$ there are a $K'$-covering family $\{g_i:d'_{i} \to d' \mid i\in I\}$ and for each $i\in I$ an arrow $\gamma_{i}:d'_{i}\to A(d_{i})$ and an arrow $u_i:p(d_{i})\to c$ such that $u_{i}\circ \phi_{d_{i}}\circ p'(\gamma_{i})=p'(g_i)$:
	\[\begin{tikzcd}
		{p'(d'_{i})} && {p'(A(d_i))} & {p(d_i)} \\
		&& c
		\arrow["{\phi_{d_{i}}}", from=1-3, to=1-4]
		\arrow["{u_i}", from=1-4, to=2-3]
		\arrow["{p'(\gamma_i)}", from=1-1, to=1-3]
		\arrow["{\chi \circ p'(g_{i})}"', from=1-1, to=2-3]
	\end{tikzcd}\]
\end{enumerate}		
\end{prop}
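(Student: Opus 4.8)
The plan is to establish the chain of equivalences $(i)\Leftrightarrow(ii)\Leftrightarrow(iii)\Leftrightarrow(iv)$, exploiting that every relative sheaf can be written as a colimit of representables of the form $\ell_J(c)$ and that the canonical (relative) topology is controlled by epimorphic families. The implication $(i)\Rightarrow(ii)$ is immediate by specializing $E$ to $\ell_J(c)$. For $(ii)\Rightarrow(i)$ I would write an arbitrary object $E$ of $\Sh(\cbicat,J)$ as a colimit $\colim_i \ell_J(c_i)$; since $C_p^*$, $C_{p'}^*$ and $\Sh(A)^*$ are all left adjoints (inverse images), they preserve this colimit, the comma categories in question have colimits computed componentwise in the first coordinate, and $\Tilde\phi$ is natural, so $\Tilde\phi_E$ is a colimit of the arrows $\Tilde\phi_{\ell_J(c_i)}$; a colimit of epimorphisms in a topos is an epimorphism, which gives the claim. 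The equivalence $(ii)\Leftrightarrow(iii)$ is essentially a reformulation: by Proposition~\ref{etaextensionproperties}(i) together with the description of $\Tilde A$, the arrow in $(ii)$ is exactly the image under $\tilde A(c)$ of the terminal object $1_{\Sh(\dbicat,K)/C_p^*\ell_J c}=[\,1_{C_p^*\ell_J c}\,]$, landing in the slice over $C_{p'}^*\ell_J c$; unwinding what ``covering for the canonical relative topology'' means (its first component is epic in $\Sh(\dbicat',K')$, by Theorem~\ref{thm:relativesitegeometricmorphism}) shows this is precisely the assertion that the canonical map $\tilde A(c)(!)\to !$ over the terminal object is an epimorphism in the slice topos, i.e.\ in $\Sh(\dbicat',K')$ after applying the (epi-reflecting) forgetful functor from the slice.

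The substantive step is $(iii)\Leftrightarrow(iv)$. Here I would use Proposition~\ref{morphsitesinducing}, which identifies $\tilde A(c)$ with $a_{K'_c}\lan_{A_c^{op}} i_{K_c}$, the geometric morphism induced by the comorphism-free description $A_c:(p\downarrow c,K_c)\to(p'\downarrow c,K'_c)$. Thus $(iii)$ says exactly that the geometric morphism $\Sh(A_c):\Sh((p\downarrow c),K_c)\to\Sh((p'\downarrow c),K'_c)$ sends the terminal sheaf to an epimorphism-onto object; but this is precisely the first clause in the classical (Diaconescu) characterization of when a functor $A_c$ between sites satisfies the first morphism-of-sites / flatness condition, namely local surjectivity. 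Spelling this out over the site $(p'\downarrow c)$: an object there is a pair $(d',\chi:p'(d')\to c)$, and the condition that the canonical arrow to the terminal sheaf be locally surjective says that, $K'_c$-locally on $(d',\chi)$, the object is hit through $A_c$, which translates verbatim into the existence of a $K'$-covering family $\{g_i:d'_i\to d'\}$ (recall $K'_c$-coverings are those whose $\dbicat'$-projection is $K'$-covering) together with arrows $\gamma_i:d'_i\to A(d_i)$ and $u_i:p(d_i)\to c$ satisfying $u_i\circ\phi_{d_i}\circ p'(\gamma_i)=\chi\circ p'(g_i)$, because a morphism $(d'_i,\chi\circ p'(g_i))\to A_c(d_i,u_i)=(A(d_i),u_i\circ\phi_{d_i})$ in $(p'\downarrow c)$ is by definition an arrow $\gamma_i:d'_i\to A(d_i)$ with $u_i\circ\phi_{d_i}\circ p'(\gamma_i)=\chi\circ p'(g_i)$. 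Running this correspondence backwards gives the converse.

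The main obstacle I anticipate is bookkeeping rather than conceptual: making the identifications in $(ii)\Leftrightarrow(iii)$ precise requires care about terminal objects in the slice toposes $\Sh(\dbicat,K)/C_p^*\ell_J c$ and the exact meaning of ``covering for the canonical relative topology'' of the fiber, and in $(iii)\Leftrightarrow(iv)$ one must verify that the local surjectivity condition, read through the site presentation $(p'\downarrow c,K'_c)$ of the fiber (as recalled in Section~\ref{yonedarel}), matches the diagram displayed in $(iv)$ on the nose — in particular that the Giraud-type topology $K'_c$ is the right thing to quantify over, and that no extra sheafification is needed because we are only asking for an epimorphism, not an isomorphism. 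Once these translations are set up, each implication is a short unwinding.
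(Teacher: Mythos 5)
Your proposal is correct and follows essentially the same route as the paper: you reduce (i) to the representable case (ii) (your colimit-of-pointwise-epis argument is the same fact as the paper's covering-family-plus-naturality-square transitivity), identify (ii) with (iii) by passing to the slice, where the forgetful functor preserves and reflects epimorphisms, and translate the epimorphism condition into the local surjectivity statement (iv) over the comma site $(p'\downarrow c)$. The only cosmetic difference is that you package the last step via Proposition \ref{morphsitesinducing} together with the classical criterion that $a_{K'_c}\lan_{A_c^{\mathrm{op}}}(1)\to 1$ is epic exactly when $A_c$ satisfies the first morphism-of-sites condition, whereas the paper unwinds $\Tilde{\phi}_{\ell_J(c)}$ directly as a morphism of colimits of representables; the underlying computation is identical.
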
  

\begin{proof}

(i) $\Rightarrow$ (ii) is evident, and (ii) $\Rightarrow$ (i) can be deduced from the diagrams of naturality: 

\[\begin{tikzcd}
	{\Sh (A)^* C_p^*(l_Jc)} &&& {\Sh (A)^* C_p^*(E)} \\
	\\
	\\
	{C_{p'}^*(l_Jc)} &&& {C_{p'}^*(E)}
	\arrow["{\Sh (A)^* C_p^*(a)}", from=1-1, to=1-4]
	\arrow["{\Tilde{\phi}_{E}}", from=1-4, to=4-4]
	\arrow["{C_{p'}^*(a)}"', from=4-1, to=4-4]
	\arrow["{\Tilde{\phi}_{l_Jc}}"', from=1-1, to=4-1]
\end{tikzcd}\]

Indeed, since every object $E$ is covered by the generators, and that $C_{p'}^*$ preserves the coverings, and every generator is covered by the $\Tilde{\phi}_{l_Jc}$, by transitivity we have that $\Tilde{\phi}_E$ is covering. 

(ii) $\biimp$ (iii) because the arrow in (iii) is the arrow in (ii), and so is still an epimorphism in the slice category. 

For the equivalence (iv) $\biimp$ (iii) let us recall that surjectivity in the sheaves category is given by local surjectivity in the presheaves category. The expression of $\lan_{A^{op}}C_p^*(l_Jc) \xrightarrow{\Tilde{\phi}_{l_Jc}} C_{p'}^*(l_Jc)$ in terms of a natural transformation induced between the colimits by a morphism of diagrams is indeed that every arrow $\chi: p'(d') \to c$ is locally reached, which is the point (iv).

\end{proof}

\begin{remarks}
	Condition (i) corresponds to the preservation by $\tilde{A}$, fibrewise, of the terminal object; still, it is weaker than it, as it requires the canonical arrow to merely be an epimorphism rather than an isomorphism. However, as we shall see, in presence of the other relative flatness conditions, it is enough to ensure the preservation of the terminal object (as, in fact, it is the case in the ordinary setting -- cf. the proof of Diaconescu's equivalence in \cite{maclanemoerdijk}).  
	In fact, the arrow in condition (i) or (ii) is the (unique) canonical arrow $\tilde{A}(E)({\bf !}_{\Sh({\cal D}, K)\slash C_{p}^{\ast}(E)})\to {\bf !}_{{\Sh({\cal D}', K')\slash C_{p'}^{\ast}(E)}}$ (resp. $\tilde{A}(c)({\bf !}_{\Sh({\cal D}, K)\slash C_{p}^{\ast}(l_{J}c)})\to {\bf !}_{{\Sh({\cal D}', K')\slash C_{p'}^{\ast}(l_{J}c)}}$). This is the appropriate generalization of the first condition in the notion of flat functor (or morphism of sites).
\end{remarks}

The following results provide the natural counterparts of this condition for the other conditions in the definition of morphism of sites:

\begin{prop}\label{propcond2relmorphismofsites}
Let $(A, p, p', \phi)$ be as above. Then the following conditions are equivalent:
\begin{enumerate}[(i)]
	\item For any $E\in \Sh({\cal C}, J)$ and any $U, V$ belonging to the fiber $\Sh({\cal D}, K)\slash C_{p}^{\ast}(E)$ at $E$ of $(1_{\Sh({\cal D},K)}\downarrow C_{p}^{\ast})$, the canonical arrow 
	\[
	\tilde{A}(U\times_{C_p^*E} V) \to \tilde{A}(U)\times_{C_{p'}^*E} \tilde{A}(V)
	\]
	is covering for the canonical topology of the fiber; 
	
	\item For any $c\in {\cal C}$ and any $U, V$ belonging to the fiber $\Sh({\cal D}, K)\slash C_{p}^{\ast}(l_Jc)$ at $c$ of $(1_{\Sh({\cal D},K)}\downarrow C_{p}^{\ast})$, the canonical arrow 
	\[
	\tilde{A}(U\times_{C_p^*l_Jc} V) \to \tilde{A}(U)\times_{C_{p'}^*l_Jc} \tilde{A}(V)
	\]
	is covering for the canonical topology of the fiber;
	
	\item For any $c\in {\cal C}$ and any $U, V\in \Sh({\cal D}, K)\slash C_{p}^{\ast}(l_Jc)$, the canonical arrow
	\[
	\tilde{A}(c)(U\times V) \to \tilde{A}(c)(U)\times \tilde{A}(c)(V)
	\]
	is an epimorphism.
	
	\item For any $c\in {\cal C}$, the functor $A_{c}:(p\downarrow c)$ to $(p'\downarrow c)$ satisfies the first condition in the definition of morphism of sites; that is, for any $d'\in {\cal D}'$, any arrows $h_{1}:p(d_1)\to c$ and $h_{2}:p(d_{2})\to c$ and any arrows $u:d'\to A(d_{1})$ and $v:d'\to A(d_{2})$ such that $h_{1}\circ \phi_{d_{1}}\circ p'(u)=h_{2}\circ \phi_{d_{2}}\circ p'(v)$, there are a $K'$-covering family $\{g_{i}:d'_{i}\to d' \mid i\in I\}$ and for each $i\in I$ an object $d_{i}\in {\cal D}$, an arrow $\gamma_{i}:d'_{i}\to A(d_{i})$ and arrows $s_i:d_i \to d_{1}$ and $t_i:d_i\to d_{2}$ such that $h_{1}\circ p(s_i)=h_{2}\circ p(t_{i})$ and $u\circ g_i=A(s_i)\circ \gamma_i$, $v\circ g_i=A(t_i)\circ \gamma_i$:
	
	\[\begin{tikzcd}
		{d'_i} & {d'} & {A(d_1)} && {d_i'} & {d'} & {A(d_2)} \\
		& {A(d_i)} &&&& {A(d_i)}
		\arrow["{\gamma_i}"', from=1-1, to=2-2]
		\arrow["{A(s_i)}"', from=2-2, to=1-3]
		\arrow["{g_i}", from=1-1, to=1-2]
		\arrow["u", from=1-2, to=1-3]
		\arrow["{\gamma_i}"', from=1-5, to=2-6]
		\arrow["{A(t_i)}"', from=2-6, to=1-7]
		\arrow["{g_i}", from=1-5, to=1-6]
		\arrow["v", from=1-6, to=1-7]
	\end{tikzcd}\]
\end{enumerate}	
\end{prop}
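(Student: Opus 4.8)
The plan is to prove Proposition \ref{propcond2relmorphismofsites} by following exactly the same pattern as the proof of Proposition \ref{propcond1relmorphismofsites}, replacing the terminal object with the binary product in the fibre. First I would establish (i) $\Rightarrow$ (ii) trivially (restrict to $E$ of the form $\ell_J(c)$) and (ii) $\Rightarrow$ (i) by the naturality argument: given an arbitrary $E$, write it as a colimit of representables $\ell_J(c)$; since $C_{p'}^*$ and $\Sh(A)^*$ are left exact and preserve colimits, the canonical comparison arrow $\tilde{A}(U\times_{C_p^*E}V)\to \tilde{A}(U)\times_{C_{p'}^*E}\tilde{A}(V)$ over $E$ is a ``colimit'' of the corresponding comparison arrows over the $\ell_J(c)$'s (after pulling $U,V$ back along the maps from the representables), and since $C_{p'}^*$ preserves covers and epimorphisms are stable under colimit in a topos, being locally epic over each $\ell_J(c)$ forces the arrow over $E$ to be epic. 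The equivalence (ii) $\biimp$ (iii) is again purely formal: the comparison arrow in (iii), computed in the fibre $\Sh({\cal D}',K')\slash C_{p'}^*\ell_Jc$, has the same underlying arrow in $\Sh({\cal D}',K')$ as the one in (ii), and an arrow of a slice topos is an epimorphism precisely when its underlying arrow is, so ``covering for the canonical topology of the fibre'' unwinds to ``epimorphism''.

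The substantive step is the equivalence (iii) $\biimp$ (iv). Here I would invoke Proposition \ref{morphsitesinducing}, which identifies $\tilde{A}(c)$ with $a_{K'_c}\lan_{A_c\op}i_{K_c}$, the functor induced by $A_c:(p\downarrow c)\to (p'\downarrow c)$ in the usual way. Thus the comparison arrow in (iii) is exactly the canonical arrow that one writes down in the ordinary proof of Diaconescu's theorem when checking the second filteredness condition for the functor $A_c$ between the (non-cartesian) sites $(p\downarrow c, K_c)$ and $(p'\downarrow c, K'_c)$. Since surjectivity in a sheaf topos is local surjectivity at the presheaf level, the arrow in (iii) is an epimorphism if and only if, for the left Kan extension $\lan_{A_c\op}$, every element of the presheaf-level pullback $\lan_{A_c\op}i_{K_c}(U)\times \lan_{A_c\op}i_{K_c}(V)$ is locally (i.e. after pulling back along a $K'_c$-cover) in the image of the comparison map out of $\lan_{A_c\op}i_{K_c}(U\times V)$. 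Unwinding this condition using the co-Yoneda presentation of the colimits — exactly as was done in the proof of Theorem \ref{thmcofinality} and in the proof of (iv) $\biimp$ (iii) in Proposition \ref{propcond1relmorphismofsites} — an element of the presheaf pullback over an object $(d',\chi:p'(d')\to c)$ of $(p'\downarrow c)$ is represented by a pair $(u:d'\to A(d_1),\,h_1:p(d_1)\to c)$, $(v:d'\to A(d_2),\,h_2:p(d_2)\to c)$ with $h_1\circ\phi_{d_1}\circ p'(u)=\chi\circ(\ldots)=h_2\circ\phi_{d_2}\circ p'(v)$, and being locally in the image of the comparison map means precisely that after a $K'$-cover $\{g_i:d'_i\to d'\}$ one can find spans through a common $A(d_i)$ compatible with $u,v$ over $c$ — which is the diagrammatic condition spelled out in (iv). Running this equivalence in both directions gives (iii) $\biimp$ (iv).

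The main obstacle I expect is purely bookkeeping: correctly matching the indexing data. In the colimit $\lan_{A_c\op}i_{K_c}(U)$ one is taking a colimit over the category of elements of $U$ regarded as a presheaf on $(p\downarrow c)$, and one must carefully track how an ``arrow $d'\to A(d_1)$ with an index $p(d_1)\to c$'' arises from this, how the comparison map from the product acts on such data, and how the pullback conditions $h_1\circ\phi_{d_1}\circ p'(u)=h_2\circ\phi_{d_2}\circ p'(v)$ and $u\circ g_i=A(s_i)\circ\gamma_i$, $v\circ g_i=A(t_i)\circ\gamma_i$ fall out. The key technical point that makes this go through — and which should be stated explicitly — is that, because $p$ and $p'$ are comorphisms of sites while $A$ is $(K,K')$-continuous, the sheafifications $a_{K_c}$ and $a_{K'_c}$ do not interfere with the relevant pre/post-compositions (so that one really may test epimorphy locally in the expected way), exactly as recorded in the canonical isomorphisms used to define $\tilde\phi$. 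Since all of this is parallel to the ordinary (``absolute'') proof of the second clause of Diaconescu's equivalence, once the dictionary between the fibre-level data and the site-level data is set up, the verification is routine. I would therefore keep the written proof short, citing Propositions \ref{morphsitesinducing} and \ref{propcond1relmorphismofsites} for the structure and pointing to the proof of Theorem \ref{thmcofinality} for the co-Yoneda unwinding.
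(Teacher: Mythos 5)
Your plan coincides with the paper's proof in most of its steps: (i) $\Rightarrow$ (ii) by restriction to representables, (ii) $\Leftrightarrow$ (iii) by identifying fibrewise products and epimorphisms with pullbacks and epimorphisms in $\Sh({\cal D}',K')$, and (iii) $\Leftrightarrow$ (iv) by viewing $\tilde{A}(c)$ as the functor $a_{K'_c}\lan_{A_c^{\textup{op}}}i_{K_c}$ induced by $A_c$ (Proposition \ref{morphsitesinducing}) and unwinding the epimorphism condition as local surjectivity at the presheaf level via the co-Yoneda presentation, exactly as in the proofs of Theorem \ref{thmcofinality} and Proposition \ref{propcond1relmorphismofsites}. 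That part of your proposal is sound and is essentially the paper's argument.

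There is, however, a genuine gap in your (ii) $\Rightarrow$ (i). You assert that the comparison arrow over $E$ is ``a colimit of the corresponding comparison arrows over the $\ell_J(c)$'s, after pulling $U,V$ back''. The source does decompose this way (by universality of colimits and preservation of colimits by $\Sh(A)^{\ast}$), but the target does not: the pullback of $\tilde{A}(U)\times_{C_{p'}^{\ast}E}\tilde{A}(V)$ along $C_{p'}^{\ast}\ell_Jc\to C_{p'}^{\ast}E$ is $(\Sh(A)^{\ast}U\times_{C_{p'}^{\ast}E}C_{p'}^{\ast}\ell_Jc)\times_{C_{p'}^{\ast}\ell_Jc}(\Sh(A)^{\ast}V\times_{C_{p'}^{\ast}E}C_{p'}^{\ast}\ell_Jc)$, which is \emph{not} $\tilde{A}(U_c)\times_{C_{p'}^{\ast}\ell_Jc}\tilde{A}(V_c)$ for $U_c=U\times_{C_p^{\ast}E}C_p^{\ast}\ell_Jc$: the two differ by the maps induced by $\tilde{\phi}_{\ell_Jc}:\Sh(A)^{\ast}C_p^{\ast}\ell_Jc\to C_{p'}^{\ast}\ell_Jc$, which at this stage is not known to be an isomorphism (by Proposition \ref{etaextensionproperties}(iii), $\tilde{A}$ commutes with base change precisely when $\tilde{\phi}$ is an isomorphism, which is what the whole battery of conditions is meant to characterize). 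Hence ``epimorphisms are stable under colimits'' cannot be applied to the decomposition you describe, and the epicness granted by (ii) for the pulled-back data does not transfer to the localized comparison arrows for free. This mismatch between ``the pullback of the comparison arrow'' and ``the comparison arrow of the pullback'' is exactly the non-routine content of (ii) $\Rightarrow$ (i), and it is what the paper's proof is devoted to: it pulls back the comparison arrow itself along the covering family $C_{p'}^{\ast}\ell_Jc\to C_{p'}^{\ast}E$, then uses left exactness of $\Sh(A)^{\ast}$ and the $\langle 1,\tilde{\phi}_{\ell_Jc}\rangle$-induced maps to factor the hypothesis comparison arrow (epic by (ii)) through the localized one, concluding that the latter is epic, and finally invokes locality of epimorphisms (the stack property of the canonical stack) to get epicness over $E$. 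Your write-up needs this bridging argument made explicit; the ``sheafification does not interfere'' point you flag is relevant to (iii) $\Leftrightarrow$ (iv) but does not address this issue.
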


\begin{proof}
It is immediate that (i) implies (ii). In the other direction: any $C_{p'}^*E$ is covered by the $C_{p'}^*l_Jc$ and the arrows coming from them induced by the Yoneda lemma. Now, since $(1_{\Sh({\cal D},K)} \downarrow C_{p'}^*)$ is a stack, we have that an arrow being an epimorphism in the fibre over $E$ is implied by the localization of this arrows being epimorphisms in the fibre over some covering. Let us sum up the situation in diagrams: 

\[\begin{tikzcd}
	& {\Sh(A)^*U\times_{C_{p'}^*E}\Sh(A)^*V} \\
	\\
	& {\Sh(A)^*(U\times_{C_p^*E}V)} \\
	{\Sh(A)^*U} && {\Sh(A)^*V} \\
	& {C_{p'}^*E}
	\arrow[from=3-2, to=4-1]
	\arrow["{\Tilde{\phi}_E\Sh(A)^*u}", from=4-1, to=5-2]
	\arrow[from=3-2, to=4-3]
	\arrow["{\Tilde{\phi}_E\Sh(A)^*v}"', from=4-3, to=5-2]
	\arrow[dashed, from=3-2, to=1-2]
	\arrow[from=1-2, to=4-1]
	\arrow[from=1-2, to=4-3]
\end{tikzcd}\]

We first have the  arrow  mentioned in the proposition in the fiber over $E$, and then we pull back this diagram into the fiber over a representable presheaf $c$. Moreover, the pullback of a pullback being a pullback, we can rewrite $ C_{p'}^*l_Jc\times_{C_{p'}^*E}\Sh(A)^*U\times_{C_{p'}^*E}\Sh(A)^*V \simeq (\Sh(A)^*U\times_{C_{p'}^*E}C_{p'}^*l_Jc)\times_{C_{p'}^*l_Jc} (\Sh(A)^*V\times_{C_{p'}^*E}C_{p'}^*l_Jc)$

So the situation is now:

\begin{tiny}
    
\[\begin{tikzcd}[row sep=7ex, column sep=0ex]
	& {(\Sh(A)^*U\times_{C_{p'}^*E}C_{p'}^*l_Jc)\times_{C_{p'}^*l_Jc} (\Sh(A)^*V\times_{C_{p'}^*E}C_{p'}^*l_Jc)} \\
	\\
	& {C_{p'}^*l_Jc \times_{C_{p'}^*E} \Sh(A)^*(U\times_{C_p^*E}V)} \\
	{\Sh(A)^*U\times_{C_{p'}^*E}C_{p'}^*l_Jc} && {\Sh(A)^*V\times_{C_{p'}^*E}C_{p'}^*l_Jc} \\
	& {C_{p'}^*l_Jc}
	\arrow[from=1-2, to=4-1]
	\arrow[from=3-2, to=4-1]
	\arrow[from=3-2, to=4-3]
	\arrow[from=1-2, to=4-3]
	\arrow[from=4-1, to=5-2]
	\arrow[from=4-3, to=5-2]
	\arrow[dashed, from=3-2, to=1-2]
\end{tikzcd}\]
\end{tiny}




To show that this dashed arrow is an epimorphism, we make the hypothesis of (ii) appear by using the morphisms induced by $\Tilde{\phi}$: 

\begin{tiny}
    
\[\begin{tikzcd}[row sep=7ex, column sep=3ex]
	{\Sh(A)^*U\times_{\Sh(A)^*C_p^*E}\Sh(A)^*C_p^*l_Jc} && {\Sh(A)^*C_p^*l_Jc \times_{\Sh(A)^*C_p^*E} \Sh(A)^*(U\times_{C_p^*E}V)} && {\Sh(A)^*V\times_{\Sh(A)^*C_p^*E}\Sh(A)^*C_p^*l_Jc} \\
	\\
	{\Sh(A)^*U\times_{C_{p'}^*E}C_{p'}^*l_Jc} && {C_{p'}^*l_Jc \times_{C_{p'}^*E} \Sh(A)^*(U\times_{C_p^*E}V)} && {\Sh(A)^*V\times_{C_{p'}^*E}C_{p'}^*l_Jc}
	\arrow["{<1_{\Sh(A)^*U},\Tilde{\phi_c}>}", dotted, from=1-1, to=3-1]
	\arrow["{<1_{\Sh(A)^*(U\times_{C_p^*}V)},\Tilde{\phi_c}>}"', dotted, from=1-3, to=3-3]
	\arrow["{<1_{\Sh(A)^*V},\Tilde{\phi_c}>}", dotted, from=1-5, to=3-5]
	\arrow[from=1-3, to=1-1]
	\arrow[from=1-3, to=1-5]
	\arrow[from=3-3, to=3-1]
	\arrow[from=3-3, to=3-5]
\end{tikzcd}\]
\end{tiny}

Since $\Sh(A)^*$ commutes with finite limits, we have: \\ 

$$\Sh(A)^*C_p^*l_Jc \times_{\Sh(A)^*C_p^*E} \Sh(A)^*(U\times_{C_p^*E}V)$$ 
$$ \simeq $$ 
$$ \Sh(A)^*(C_p^*l_Jc\times_{C_p^*E}U\times_{C_p^*E}V)$$ and, as the pullback of a pullback is still a pullback: \\

$$\Sh(A)^*(C_p^*l_Jc\times_{C_p^*E}U\times_{C_p^*E}V) $$ 
$$\simeq$$
$$\Sh(A)^*((C_p^*l_Jc \times_{C_p^*l_Jc} U) \times_{C_{p'}^*l_Jc} \Sh(A)^*(C_p^*l_Jc \times_{C_p^*l_Jc} V)) $$ \\
Moreover, by the universal property of: 

$(\Sh(A)^*U\times_{C_{p'}^*E}C_{p'}^*l_Jc)\times_{C_{p'}^*l_Jc} (\Sh(A)^*V\times_{C_{p'}^*E}C_{p'}^*l_Jc)$,  we now have the dashed arrow:

\begin{tiny}
    
\[\begin{tikzcd}[row sep=7ex, column sep=0ex]
	& {\Sh(A)^*((C_p^*l_Jc \times_{C_p^*l_Jc} U) \times_{C_{p'}^*l_Jc} \Sh(A)^*(C_p^*l_Jc \times_{C_p^*l_Jc} V)) } \\
	\\
	& { \Sh(A)^*((C_p^*l_Jc \times_{C_p^*l_Jc} U) \times_{C_p^*l_Jc} (C_p^*l_Jc \times_{C_p^*l_Jc} V)) } \\
	\\
	{\Sh(A)^*U\times_{C_{p'}^*E}C_{p'}^*l_Jc} && {\Sh(A)^*V\times_{C_{p'}^*E}C_{p'}^*l_Jc} \\
	& {C_{p'}^*l_Jc}
	\arrow["{\Tilde{\phi}_c\Sh(A)^*\pi}", from=3-2, to=5-1]
	\arrow["{\Tilde{\phi}_c\Sh(A)^*\pi'}"', from=3-2, to=5-3]
	\arrow[from=1-2, to=5-1]
	\arrow[from=1-2, to=5-3]
	\arrow[from=5-1, to=6-2]
	\arrow[from=5-3, to=6-2]
	\arrow[dashed, from=3-2, to=1-2]
\end{tikzcd}\]
\end{tiny}

This arrow is, by assumption, an epimorphism. But, by what precedes we have the following commutative diagram:

\begin{tiny}
    \[\begin{tikzcd}[row sep=7ex, column sep=0ex]
	{\Sh(A)^*((C_p^*l_Jc \times_{C_p^*l_Jc} U) \times_{C_{p'}^*l_Jc} \Sh(A)^*(C_p^*l_Jc \times_{C_p^*l_Jc} V)) } \\
	&& {C_{p'}^*l_Jc \times_{C_{p'}^*E} \Sh(A)^*(U\times_{C_p^*E}V)} \\
	{ \Sh(A)^*((C_p^*l_Jc \times_{C_p^*l_Jc} U) \times_{C_p^*l_Jc} (C_p^*l_Jc \times_{C_p^*l_Jc} V)) }
	\arrow[dashed, from=3-1, to=1-1]
	\arrow[dashed, from=2-3, to=1-1]
	\arrow["{<\Tilde{\phi}_c,1>}"', from=3-1, to=2-3]\end{tikzcd}\]

\end{tiny}

That gives us that the localizations factorize through epimorphisms, and are therefore epimorphisms. Hence, our initial arrow is an epimorphism. \\

The equivalence between (iii) and (ii) is because the product in the fibres are the pullbacks in the topos, and an morphism is an epimorphism in a fiber if and only if it is already one in the topos (one hand is obvious, and the other one comes from the fact that postomposition with the unique arrow to the terminal of the basis is a left adjoint to pullback, and the fiber over the terminal is the topos itself). \\

The equivalence between (iv) and (ii) is due to the fact that we can restrict on the generators not only on the "target" of the pullback, but also on the objects at the source of the legs, since $\Tilde{A}$ preserves colimits ($\Sh(A)^*$ does and the post-composition with an arrow does too in the slice categories), as well as pullbacks. The epimorphism condition is hence equivalent to the pointwise locally epimorphism condition on the representable presheaves, of which (iv) is a reformulation: the first part says that the two arrows lie in the pullback, and the second that there exists a covering on which the localizations of these arrows are in the image of the morphism.

\end{proof}

Let us now turn to the third condition in the definition of morphism of sites:

\begin{prop}\label{propcond3relmorphismofsites}
	Let $(A, p, p', \phi)$ be as above. Then the following conditions are equivalent:
	\begin{enumerate}[(i)]
		\item For any $E\in \Sh({\cal C}, J)$ and any pair of parallel arrows $h_{1}, h_{2}:U\to V$ belonging to the fibre $\Sh({\cal D}, K)\slash C_{p}^{\ast}(E)$ at $E$ of $(1_{\Sh({\cal D},K)}\downarrow C_{p}^{\ast})$, the canonical arrow 
		\[
		\tilde{A}(\textup{Eq}(h_{1}, h_{2})) \to \textup{Eq}(\tilde{A}(h_{1}), \tilde{A}(h_{2}))
		\]
		is covering for the canonical topology of the fiber; 
		
		\item For any $c\in {\cal C}$ and any pair of parallel arrows $h_{1}, h_{2}:U\to V$ belonging to the fibre $\Sh({\cal D}, K)\slash C_{p}^{\ast}(l_Jc)$ at $c$ of $(1_{\Sh({\cal D},K)}\downarrow C_{p}^{\ast})$, the canonical arrow 
		\[
		\tilde{A}(\textup{Eq}(h_{1}, h_{2})) \to \textup{Eq}(\tilde{A}(h_{1}), \tilde{A}(h_{2}))
		\]
		is covering for the canonical topology of the fiber; 
		
		\item For any $c\in {\cal C}$ any pair of parallel arrows $h_{1}, h_{2}:U\to V$ in $\Sh({\cal D}, K)\slash C_{p}^{\ast}(l_Jc)$, the canonical arrow 
		\[
		\tilde{A}(c)(\textup{Eq}(h_{1}, h_{2})) \to \textup{Eq}(\tilde{A}(c)(h_{1}), \tilde{A}(c)(h_{2}))
		\]
		is an epimorphism.
		
		\item For any $c\in {\cal C}$, the functor $A_{c}$ satisfies the third condition in the definition of morphism of sites.
		
		\item The functor $A$ satisfies the third condition in the definition of morphism of sites; that is, for any pair of arrows $f_1 , f_2: d_1 \rightrightarrows d_2$ of ${\mathcal D}$ and any arrow of ${\mathcal D}'$
		$$
		g: d' \longrightarrow A(d_{1})
		$$
		satisfying
		$$
		A(f_1) \circ g = A(f_2) \circ g \, ,
		$$
		there exist a $K'$-covering family
		$$
		g_i: d_i' \longrightarrow d' \, , \quad i \in I \, ,
		$$
		and a family of morphisms of ${\mathcal D}$
		$$
		k_i: d_i \longrightarrow d_{1} \, , \quad i \in I \, ,
		$$
		satisfying
		$$
		f_1 \circ k_i = f_2 \circ k_i \, , \quad \forall \, i \in I \, ,
		$$
		and of morphisms of ${\mathcal D}'$
		$$
		\gamma_i: d_i' \longrightarrow F(d_i) \, , \quad i \in I \, ,
		$$
		making the following square commutative:
		$$
		\xymatrix{
			d_i' \ar[r]^{g_i} \ar[d]_{\gamma_i} & d' \ar[d]^{g} \\
			A(d_i) \ar[r]^{F(k_i)} & A(d_{1})
		} 
		$$ 
	\end{enumerate}
\end{prop}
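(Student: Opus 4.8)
The plan is to run the cycle $(i)\Rightarrow(ii)\Rightarrow(i)$, $(ii)\Leftrightarrow(iii)$, $(iii)\Leftrightarrow(iv)$, $(iv)\Leftrightarrow(v)$, following the template of the proofs of Propositions \ref{propcond1relmorphismofsites} and \ref{propcond2relmorphismofsites}; the one genuinely new ingredient is the extra descent down to the bare functor $A$, which is made possible by the special shape of the equalizer condition (the ``obvious index'' phenomenon of Remark \ref{remcof}(b)).

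First, $(i)\Rightarrow(ii)$ is the evident specialization to $E=\ell_J(c)$. For $(ii)\Rightarrow(i)$ I would argue exactly as in Proposition \ref{propcond2relmorphismofsites}: since the comma category $(1_{\Sh({\cal D}',K')}\downarrow C_{p'}^\ast)$, with its canonical relative topology, is a stack over $\Sh({\cal C},J)$, an arrow in its fibre over an object $E$ is covering for the canonical topology of that fibre as soon as its localizations to the fibres over a covering family of $E$ are. Covering $E$ by representables $\ell_J(c)$ and pulling back the comparison arrow $\tilde A(\textup{Eq}(h_1,h_2))\to\textup{Eq}(\tilde A(h_1),\tilde A(h_2))$ along the induced arrows, one uses that $\tilde A$ is a morphism of sites (Proposition \ref{etaextensionproperties}(ii)), hence preserves finite limits, together with the commutation of equalizers with the pullbacks effecting the localization, to identify each such localization with the corresponding comparison arrow over $\ell_J(c)$; hypothesis $(ii)$ then makes these covering, and the stack property transfers this back to $E$. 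This is the diagram chase of Proposition \ref{propcond2relmorphismofsites} carried out with equalizers in place of binary products.

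Next, $(ii)\Leftrightarrow(iii)$ is formal, precisely as in Proposition \ref{propcond2relmorphismofsites}: the equalizer of $h_1,h_2$ taken in the fibre $\Sh({\cal D},K)/C_p^\ast(\ell_J c)$ is their equalizer in $\Sh({\cal D},K)$, and an arrow of such a fibre is covering for the canonical topology of the fibre if and only if it is an epimorphism in $\Sh({\cal D},K)$ (one direction being clear, the other because postcomposition with the structural arrow to the base is left adjoint to pullback and the fibre over the terminal object is the whole topos). For $(iii)\Leftrightarrow(iv)$, Proposition \ref{morphsitesinducing} gives $\tilde A(c)\simeq a_{K'_c}\,\lan_{A_c^{op}}\,i_{K_c}$, so the comparison arrow of $(iii)$ is the $K'_c$-sheafification of the canonical arrow $\lan_{A_c^{op}}(\textup{Eq}(h_1,h_2))\to\textup{Eq}(\lan_{A_c^{op}}h_1,\lan_{A_c^{op}}h_2)$ of presheaves on $(p'\downarrow c)$; since sheafification is exact, $(iii)$ holds iff this arrow is a $K'_c$-local epimorphism, and — since $\tilde A(c)$ preserves colimits, so one may test on the representables of $(p\downarrow c)$, and since a $K'_c$-covering sieve on $(d',w)$ is precisely one whose projection to ${\cal D}'$ is $K'$-covering on $d'$ — this is exactly the statement that $A_c\colon(p\downarrow c,K_c)\to(p'\downarrow c,K'_c)$ satisfies the third condition for a morphism of sites. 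In short, this step is the ``absolute'' equivalence between the finite-limit-preservation form and the filtering form of the third Diaconescu condition (cf. \cite{maclanemoerdijk}), applied fibrewise.

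Finally $(iv)\Leftrightarrow(v)$, which is the characteristic feature of the equalizer condition. The point is that in any comma category $(p\downarrow c)$ or $(p'\downarrow c)$ the structure map to $c$ of the \emph{domain} of an arrow is uniquely determined by that arrow together with the structure map on its \emph{codomain}; hence an instance of the third condition for $A_c$ — a parallel pair $h_1,h_2\colon(d_1,v_1)\rightrightarrows(d_2,v_2)$ and a test arrow $g\colon(d',w)\to A_c(d_1,v_1)$ with $A_c(h_1)\circ g=A_c(h_2)\circ g$ — is the same data as a parallel pair $f_1,f_2\colon d_1\rightrightarrows d_2$ of ${\cal D}$ and an arrow $g\colon d'\to A(d_1)$ with $A(f_1)\circ g=A(f_2)\circ g$, the object $c$ and the structure maps $v_1,v_2,w$ being recoverable; moreover $K'_c$-covering families of $(d',w)$ correspond to $K'$-covering families of $d'$, and the factorisation data in the conclusion carries no additional index information. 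Thus $(v)\Rightarrow(iv)$ is obtained by forgetting the structure maps and transporting the covering family, while for $(iv)\Rightarrow(v)$ one realises a given instance of the third condition for $A$ inside $(p\downarrow c)$ for a suitable choice of $c$ and structure maps compatible with the parallel pair. I expect this last realization, together with the stack-based diagram chase of $(ii)\Rightarrow(i)$, to be the two places where the argument is not purely formal and requires care in checking that no information is lost in passing between the fibres $(p\downarrow c)$ and the base-free formulation.
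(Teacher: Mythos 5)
Most of your plan coincides with the paper's own argument: the specialization $(i)\Rightarrow(ii)$, the localization of the comparison arrow along the representables covering $E$ for $(ii)\Rightarrow(i)$ (one caveat: the localization of the arrow over $E$ is \emph{not} literally identified with the comparison arrow over $\ell_J(c)$, since the two base changes are taken over $\Sh(A)^{\ast}C_p^{\ast}$ and over $C_{p'}^{\ast}$ respectively; as in the chase of Proposition \ref{propcond2relmorphismofsites}, one only gets a factorisation of the hypothesis epimorphism through the localized arrow via the maps induced by $\tilde{\phi}$, which is enough), the identification of equalizers and epimorphisms in the fibres with those in the ambient topos for $(ii)\Leftrightarrow(iii)$, and the fibrewise ``absolute'' translation via $\tilde{A}(c)\simeq a_{K'_c}\mathrm{Lan}_{A_c^{op}}i_{K_c}$ for $(iii)\Leftrightarrow(iv)$. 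The genuine gap is in your treatment of $(iv)\Leftrightarrow(v)$. Your claim that an instance of the third condition for $A_c$ ``is the same data'' as an instance for $A$, with $c$ and the structure maps recoverable, is false: only $v_1$ and $w$ are composites ($v_1=v_2\circ p(f_1)=v_2\circ p(f_2)$, $w=v_1\circ\phi_{d_1}\circ p'(g)$), while $c$ and the indexing $v_2:p(d_2)\to c$ of the \emph{codomain} are genuinely extra data, constrained by $v_2\circ p(f_1)=v_2\circ p(f_2)$. Consequently $(v)\Rightarrow(iv)$ (forgetting the indexings, which is the paper's ``obvious index'' remark) is fine, but your proposed $(iv)\Rightarrow(v)$ --- ``realise a given instance for $A$ inside $(p\downarrow c)$ for a suitable choice of $c$'' --- breaks down: if $p(f_1)\neq p(f_2)$ and no arrow out of $p(d_2)$ coequalizes them in $\cal C$, the pair $f_1,f_2$ is a parallel pair in \emph{no} category $(p\downarrow c)$, so condition $(iv)$ says nothing directly about such an instance.

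The equivalence is nonetheless true, but the implication towards $(v)$ has to be obtained from the topos-level conditions rather than by a site-level re-indexing. Two ways to close the cycle you already have at your disposal: either apply $(i)$ (which you have from $(ii)$) to $E=1_{\Sh({\cal C},J)}$, whose fibre is all of $\Sh({\cal D},K)$ with no indexing constraints, so that the comparison arrow for an arbitrary parallel pair of sheaves is an epimorphism, and then translate epimorphy into local surjectivity on representables to recover exactly $(v)$; or, staying over $c=p(d_1)$, embed the pair $\ell_K(f_1),\ell_K(f_2)$ into the fibre by pairing with the structure map $x_{d_1}:\ell_K(d_1)\to C_p^{\ast}(\ell_J p(d_1))$ of Proposition \ref{propbidense}, i.e.\ apply $(ii)$/$(iii)$ to $h_i=\langle x_{d_1},\ell_K(f_i)\rangle:\ell_K(d_1)\to C_p^{\ast}(\ell_J p(d_1))\times \ell_K(d_2)$, whose equalizer is $\textup{Eq}(\ell_K f_1,\ell_K f_2)$ and whose image equalizer is $\textup{Eq}(\ell_{K'}Af_1,\ell_{K'}Af_2)$; the usual representable translation of the resulting epimorphism then yields $(v)$. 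Note this pairing trick uses the products available in the fibre \emph{toposes} and has no analogue in $(p\downarrow c)$, which is precisely why the naive site-level identification cannot work.
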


\begin{proof}
As for the precedent, the equivalence (ii) $\Leftrightarrow$ (i) can be proven by pulling back along the generators, saying that pullbacks commute with equalizers, and using $\Tilde{\phi}$ to have a factorisation of the induced arrow by the arrow being stated as epimorphism in (ii), concluding that this is itself an epimorphism.

The equivalence (ii) $\Leftrightarrow$ (iii) is due to the fact that equilazers in the slice toposes are the same as equalizers in the absolute topos, as well as the epimorphisms.

The equivalence  (ii) $\Leftrightarrow$ (iv) is again the reformulation on the representable presheaves of the locally epimorphic condition on the presheaves for this morphism to be an epimorphism in the category of sheaves. 

And last two points are equivalent since we don't have to stipulate an indexing arrow of the form $p(d') \to c$, as it is directly given as a composite.

\end{proof}

Lastly, let us analyze the relative analogue of the cover-preservation condition in the notion of morphism of sites:

\begin{prop}\label{propcond4relmorphismofsites}
	Let $(A, p, p', \phi)$ be as above. Then the following conditions are equivalent:
\begin{enumerate}[(i)]
	\item For any $E\in \Sh({\cal C}, J)$ and any family of covering arrows in the fibre $\Sh({\cal D}, K)\slash C_{p}^{\ast}(E)$ at $E$ of $(1_{\Sh({\cal D},K)} \downarrow C_{p}^{\ast})$, its image under $\tilde{A}$ is a  covering family.
	
	\item For any $c\in {\cal C}$ and any family of covering arrows in the fibre $\Sh({\cal D}, K)\slash C_{p}^{\ast}(l_Jc)$ at $l_Jc$ of $(1_{\Sh({\cal D},K)} \downarrow C_{p}^{\ast})$, its image under $\tilde{A}$ is a covering family.
	
	\item  For any $c\in {\cal C}$, the functor
	\[
	\tilde{A}(c):\Sh({\cal D}, K)\slash C_{p}^{\ast}(l_{J}c) \to \Sh({\cal D}', K')\slash C_{p'}^{\ast}(l_{J}c) 
	\]
	preserves epimorphic families. 
	
	\item For any $c\in {\cal C}$, the functor $A_{c}$ is cover-preserving. 
	
	\item The functor $A$ is cover-preserving.
\end{enumerate}	
\end{prop}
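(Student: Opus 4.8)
The strategy is to run the same chain of equivalences as in the proofs of Propositions~\ref{propcond1relmorphismofsites}--\ref{propcond3relmorphismofsites}, exploiting the site presentation of the fibres recalled in Section~\ref{yonedarel} together with Propositions~\ref{etaextensionproperties} and~\ref{morphsitesinducing}. The implication (i)~$\Rightarrow$~(ii) is just the instance $E=\ell_Jc$. For (ii)~$\Rightarrow$~(i) I would argue exactly as in the step (ii)~$\Rightarrow$~(i) of Proposition~\ref{propcond1relmorphismofsites}: an arbitrary object $E$ of $\Sh({\cal C},J)$ is covered by the representables $\ell_Jc$, both $C_{p'}^{\ast}$ and $\Sh(A)^{\ast}$ send covering (equivalently, epimorphic) families to covering families, and covering families are stable under composition; since the fibration $(1_{\Sh({\cal D}',K')}\downarrow C_{p'}^{\ast})$ is a stack, being a covering family in the fibre over $E$ may be tested after pulling back along a covering of $E$, so one reduces a covering family in the fibre over an arbitrary $E$ to the situation of~(ii).

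For (ii)~$\Leftrightarrow$~(iii): the fibre of $(1_{\Sh({\cal D}',K')}\downarrow C_{p'}^{\ast})$ over $\ell_Jc$ is the slice topos $\Sh({\cal D}',K')\slash C_{p'}^{\ast}(\ell_Jc)$, and the covering families for the canonical topology of a slice topos are precisely the epimorphic families; moreover, by Proposition~\ref{etaextensionproperties}(i) the restriction of $\tilde A$ to the fibre over $\ell_Jc$ is by definition $\tilde A(c)$. Hence (ii) and (iii) say literally the same thing. For (iii)~$\Leftrightarrow$~(iv): by Proposition~\ref{morphsitesinducing} we have $\tilde A(c)\simeq a_{K'_c}\lan_{A_c^{op}}i_{K_c}$, the functor induced by $A_c\colon(p\downarrow c,K_c)\to(p'\downarrow c,K'_c)$ along the canonical functors to the respective sheaf toposes. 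As in the step (iii)~$\Leftrightarrow$~(iv) of Proposition~\ref{propcond1relmorphismofsites}, epimorphicity of a family in the $K'_c$-sheaf topos unwinds into local joint surjectivity of the corresponding presheaf maps, and this, via the co-Yoneda description of the relevant colimits and of $\tilde A(c)$, is equivalent to $A_c$ sending $K_c$-covering sieves to $K'_c$-covering ones; that is, $\tilde A(c)$ preserves epimorphic families exactly when $A_c$ is cover-preserving, which is the relative-site instance of the classical fact that the canonical extension of a functor between sites preserves covers iff the functor does. The a priori size problems with the categories $(p\downarrow c)$ are handled by their small-generatedness, as in Proposition~\ref{prop:relativesitesmallgenerated}.

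It remains to treat (iv)~$\Leftrightarrow$~(v), which is a direct unwinding of the definition of $K_c$. A sieve on an object $(d,u\colon p(d)\to c)$ of $(p\downarrow c)$ is the same datum as a sieve on $d$ in ${\cal D}$ (the index over $c$ being forced by composition), and it is $K_c$-covering exactly when the latter is $K$-covering; moreover $A_c$ acts as $A$ on the underlying ${\cal D}$-/${\cal D}'$-objects and arrows, with $A_c(d,u)=(A(d),u\circ\phi_d)$. Thus $A_c$ is cover-preserving iff, for every $d$ and every $K$-covering sieve $S$ on $d$, the sieve generated by $\{A(g)\mid g\in S\}$ on $A(d)$ is $K'$-covering --- a condition not involving $c$ or $u$. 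Specialising, for each $d$, to $c=p(d)$ and $u=1_{p(d)}$ yields (iv)~$\Rightarrow$~(v), while (v)~$\Rightarrow$~(iv) is immediate since this last condition is precisely the cover-preservation of $A$.

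The only genuinely substantive link is (iii)~$\Leftrightarrow$~(iv), where one must invoke the identification of $\tilde A(c)$ from Proposition~\ref{morphsitesinducing} and the ``absolute'' characterisation of cover-preserving functors between sites, with due care for the (resolved) size issues; all the remaining steps are formal consequences of Proposition~\ref{etaextensionproperties} and of the description of the fibre sites in Section~\ref{yonedarel}.
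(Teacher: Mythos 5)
Your overall architecture is close to the paper's: (i)$\Leftrightarrow$(ii) by localization over the representables, (ii)$\Leftrightarrow$(iii) as a reformulation, and (iv)$\Leftrightarrow$(v) by the observation that sieves on $(d,u)$ in $(p\downarrow c)$ are exactly sieves on $d$, with the specialization $c=p(d)$, $u=1_{p(d)}$ --- all of this matches the paper (which, however, bridges the two groups of conditions by proving (ii)$\Leftrightarrow$(v) directly rather than (iii)$\Leftrightarrow$(iv)).

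The gap is in the direction (iii)$\Rightarrow$(iv) of your ``only genuinely substantive link''. The principle you invoke --- that the canonical extension $a_{K'_c}\lan_{A_c^{op}}i_{K_c}$ preserves epimorphic families \emph{iff} $A_c$ is cover-preserving --- is not a classical fact in this generality: only the ``if'' direction holds unconditionally. For the ``only if'' direction one needs to know what the extension does to representables, and without further hypotheses $a_{K'}\lan_{F^{op}}i_{K}(\ell_{K}x)$ need not be (covered by) $\ell_{K'}(F x)$. Concretely, take $F=\mathrm{id}$ on the arrow category $\mathbf{2}$, with $K$ the topology in which the arrow $0\to 1$ covers $1$ and $K'$ the trivial topology: then $a_{K'}\lan_{F^{op}}i_{K}=i_{K}$ preserves epimorphic families (sheaves have bijective restriction maps), yet $F$ is not cover-preserving. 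So your unwinding ``epimorphicity of the image family $\Leftrightarrow$ $A_c$ sends $K_c$-covers to $K'_c$-covers'' does not follow from the co-Yoneda description alone. What makes the step work --- and what the paper's own (ii)$\Rightarrow$(v) argument silently uses when it says the image of $\ell_K(S)$ is $\ell_{K'}A(S)$ --- is the standing hypothesis that $A$ is a morphism of sites, whence $\Sh(A)^{\ast}\circ\ell_K\cong\ell_{K'}\circ A$, equivalently $\tilde A(c)(\ell_{K_c}(d,u))\cong\ell_{K'_c}(A_c(d,u))$ under the identifications of Section~\ref{yonedarel} and Proposition~\ref{morphsitesinducing}. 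With that identification in hand, the image of the epimorphic family $\{\ell_{K_c}(g)\}_{g\in S}$ is $\{\ell_{K'_c}(A_c(g))\}_{g\in S}$, which is epimorphic precisely when the sieve generated by $A_c(S)$ is $K'_c$-covering, and your argument closes; as written, however, the crucial implication rests on a false general claim rather than on this identification.
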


\begin{proof}

Let us recall that in a topos, a family of arrows is covering if and only if the arrow coming from its coproduct is epimorphic. To have the implication (ii) $\Rightarrow$ (i) (the other direction being immediate), we use the fact that an arrow is an epimorphism in the fiber over $E$ if and only if its localizations are already ones in every of the fibers over the representable presheaves (covering $E$ by the co-Yoneda lemma). These localizations are epimorphisms by the same argument as for the precedent propositions (using that $\Sh(A)^*$ commutes with pullbacks and coproducts, and factorizing by the coproduct in the fibers over representables presheaves, which is epimorphic by (ii)).

The point (iii) is just a reformulation of (ii), taking in account that the covering families in the fiber are also the epimorphic ones.

We have that (v) implies (iv) because $A_c$ sends an arrow $f$ to $A(f)$, and the covering sieves these slice categories are those who are already covering in $\cal D$ resp. $\cal D'$. And (iv) implies (v) because, if we have a covering sieve $S$ over $d$, we can look at this covering sieve over $p(d) \xrightarrow{1_{p(d)}} p(d)$ in $p \downarrow p(d)$, which is covering by definition, and sent into  $p' \downarrow p(d)$ on a covering family by $A_{p(d)}$, which says exactly that $A(S)$ is covering.

Then, if $\Tilde{A}$ is cover preserving, we have that, for every covering sieve $S$ over some $d$, $\Tilde{A}$ sends the sieve $l_K(S)$ over the terminal object in the fiber over $l_K(d)$ to an epimorphic family. This family is, by definition of $\Tilde{A}$, given by $l_{J'}A(S)$, which is epimorphic if and only if $A(S)$ is covering. This gives (ii) $\Rightarrow$ (v). Finally (v) $\Rightarrow$ (ii) because we can pull back every covering sieve of any object along the generators, and then locally represent the arrows obtained as coming from the site.  

\end{proof}

This thus leads us to the following definition/characterization of lax morphisms of sites over $({\cal C},J)$:

\begin{prop}\label{propmorphimrelmorphism}
	Let $p:({{\cal D}, K})\to ({\cal C}, J)$ and $p':({\cal D}', K')\to ({\cal C}, J)$ be comorphisms of sites and $A$ a functor ${\cal D}\to {\cal D}'$ together with a natural transformation $\phi: p'\circ A\Rightarrow p$ (N.B. We do not suppose that $A$ is a morphism of ordinary sites). Then the following conditions are equivalent:
	\begin{enumerate}
		\item The functor $A$ together with $\phi$ constitute a lax morphism of sites over $({\cal C},J)$, that is, it (is a morphism of ordinary sites which) induces a geometric morphism $\Sh(A)$ over $\Sh({\cal C}, J)$;
		
		\item For any $c\in {\cal C}$, $A_{c}$ is a morphism of sites $((p\downarrow c), K_c)\to ((p'\downarrow c), K'_{c})$; 
		
		\item The functor $A$ together with $\phi$ satisfy the following 'relative local filteredness' and cover preserving conditions: 
		\begin{enumerate}
			\item (condition (iv) of Proposition \ref{propcond1relmorphismofsites})  For any $c\in {\cal C}$ and any object $d'$ of $\cal D$ with an arrow $\chi:p'(d')\to c$, there are a $K'$-covering family $\{g_i:d'_{i} \to d' \mid i\in I\}$ and for each $i\in I$ an arrow $\gamma_{i}:d'_{i}\to A(d_{i})$ and an arrow $u_i:p(d_{i})\to c$ such that $u_{i}\circ \phi_{d_{i}}\circ p'(\gamma_{i})=p'(g_i)\circ \chi$:
			\[\begin{tikzcd}
				{p'(d'_{i})} && {p'(A(d_i))} & {p(d_i)} \\
				&& c
				\arrow["{\phi_{d_{i}}}", from=1-3, to=1-4]
				\arrow["{u_i}", from=1-4, to=2-3]
				\arrow["{p'(\gamma_i)}", from=1-1, to=1-3]
				\arrow["{\chi \circ p'(g_{i})}"', from=1-1, to=2-3]
			\end{tikzcd}\]
			
			\item (condition (iv) of Proposition \ref{propcond2relmorphismofsites}) For any $c\in {\cal C}$, any $d'\in {\cal D}'$, any arrows $h_{1}:p(d_1)\to c$ and $h_{2}:p(d_{2})\to c$ and any arrows $u:d'\to A(d_{1})$ and $v:d'\to A(d_{2})$ such that $h_{1}\circ \phi_{d_{1}}\circ p'(u)=h_{2}\circ \phi_{d_{2}}\circ p'(v)$, there are a $K'$-covering family $\{g_{i}:d'_{i}\to d' \mid i\in I\}$ and for each $i\in I$ an object $d_{i}\in {\cal D}$, an arrow $\gamma_{i}:d'_{i}\to A(d_{i})$ and arrows $s_i:d_i \to d_{1}$ and $t_i:d_i\to d_{2}$ such that $h_{1}\circ p(s_i)=h_{2}\circ p(t_{i})$ and $u\circ g_i=A(s_i)\circ \gamma_i$, $v\circ g_i=A(t_i)\circ \gamma_i$:
			
			\[\begin{tikzcd}
				{d'_i} & {d'} & {A(d_1)} && {d_i'} & {d'} & {A(d_2)} \\
				& {A(d_i)} &&&& {A(d_i)}
				\arrow["{\gamma_i}"', from=1-1, to=2-2]
				\arrow["{A(s_i)}"', from=2-2, to=1-3]
				\arrow["{g_i}", from=1-1, to=1-2]
				\arrow["u", from=1-2, to=1-3]
				\arrow["{\gamma_i}"', from=1-5, to=2-6]
				\arrow["{A(t_i)}"', from=2-6, to=1-7]
				\arrow["{g_i}", from=1-5, to=1-6]
				\arrow["v", from=1-6, to=1-7]
			\end{tikzcd}\]
			
			\item  (condition (v) of Proposition \ref{propcond3relmorphismofsites})  For any pair of arrows $f_1 , f_2: d_1 \rightrightarrows d_2$ of ${\mathcal D}$ and any arrow of ${\mathcal D}'$
			$$
			g: d' \longrightarrow A(d_{1})
			$$
			satisfying
			$$
			A(f_1) \circ g = A(f_2) \circ g \, ,
			$$
			there exist a $K'$-covering family
			$$
			g_i: d_i' \longrightarrow d' \, , \quad i \in I \, ,
			$$
			and a family of morphisms of ${\mathcal D}$
			$$
			k_i: d_i \longrightarrow d_{1} \, , \quad i \in I \, ,
			$$
			satisfying
			$$
			f_1 \circ k_i = f_2 \circ k_i \, , \quad \forall \, i \in I \, ,
			$$
			and of morphisms of ${\mathcal D}'$
			$$
			\gamma_i: d_i' \longrightarrow F(d_i) \, , \quad i \in I \, ,
			$$
			making the following square commutative:
			$$
			\xymatrix{
				d_i' \ar[r]^{g_i} \ar[d]_{\gamma_i} & d' \ar[d]^{g} \\
				A(d_i) \ar[r]^{A(k_i)} & A(d_{1})
			} 
			$$ 
			\item (condition (v) of Proposition \ref{propcond4relmorphismofsites}) The functor $A$ is cover-preserving.
		\end{enumerate}		
	\end{enumerate}
\end{prop}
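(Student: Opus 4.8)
The plan is to use condition (2) as a hinge. On one side I will prove $(2)\Leftrightarrow(3)$ by simply reassembling the site-level conditions already isolated in Propositions \ref{propcond1relmorphismofsites}--\ref{propcond4relmorphismofsites}; on the other side I will prove $(1)\Leftrightarrow(2)$ by passing through the $\eta$-extension $\tilde A$, whose being a morphism of fibrations is, by Proposition \ref{etaextensionproperties}, equivalent to $\tilde\phi$ being an isomorphism, i.e.\ to $C_p\circ\Sh(A)\simeq C_{p'}$. The point is that $A_c$ being a morphism of sites for every $c$ is at once the ``site incarnation'' of the covering conditions and exactly what makes each fibre functor $\tilde A(c)$ cartesian.

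For $(2)\Leftrightarrow(3)$, recall that a morphism of sites is a cover-preserving functor satisfying the three covering-flatness conditions of Definition 3.2 in \cite{denseness} (nonemptiness, product, equalizer). Condition (3)(a) is verbatim condition (iv) of Proposition \ref{propcond1relmorphismofsites}, asserting for each fixed $c$ that $A_c$ satisfies the first covering-flatness condition; (3)(b) is condition (iv) of Proposition \ref{propcond2relmorphismofsites}, asserting the second one for each $A_c$; (3)(c) is condition (v) of Proposition \ref{propcond3relmorphismofsites}, which by the equivalence $(\mathrm{iv})\Leftrightarrow(\mathrm{v})$ there is equivalent to $A_c$ satisfying the third covering-flatness condition for all $c$; and (3)(d) is condition (v) of Proposition \ref{propcond4relmorphismofsites}, equivalent by $(\mathrm{iv})\Leftrightarrow(\mathrm{v})$ to $A_c$ being cover-preserving for all $c$. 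Conjoining the four equivalences, (3) holds iff $A_c$ is a morphism of sites $((p\downarrow c),K_c)\to((p'\downarrow c),K'_c)$ for every $c\in{\cal C}$, which is (2).

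For $(1)\Rightarrow(2)$: if $(A,\phi)$ is a lax morphism of sites over $({\cal C},J)$, then $A$ is a morphism of ordinary sites, so $\tilde A$ is defined and, by Proposition \ref{etaextensionproperties}(ii), is a morphism of sites; and $C_p\circ\Sh(A)\simeq C_{p'}$ says $\tilde\phi$ is an isomorphism, so by Proposition \ref{etaextensionproperties}(iii) $\tilde A$ is moreover a morphism of fibrations. Being both, $\tilde A$ restricts fibrewise to morphisms of sites $\tilde A(c)$, and by Proposition \ref{morphsitesinducing} $\tilde A(c)\simeq a_{K'_c}\lan_{A_c\op}i_{K_c}$; by the absolute characterization of morphisms of sites via finite-limit preservation of the left Kan extension along the Yoneda embedding, cartesianness of this functor forces $A_c$ to be covering-flat, and $\tilde A(c)$ being cover-preserving forces $A_c$ to be cover-preserving (Proposition \ref{propcond4relmorphismofsites}), so $A_c$ is a morphism of sites and (2) holds. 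Conversely, for $(2)\Rightarrow(1)$, assume every $A_c$ is a morphism of sites. One first checks that $A$ itself is a morphism of ordinary sites: cover-preservation and the equalizer condition are (3)(d) and (3)(c), which we have; the nonemptiness condition for $A$ at $d'\in{\cal D}'$ follows by applying the first covering-flatness condition of $A_{p'(d')}$ to the object $(d',1_{p'(d')})$ of $(p'\downarrow p'(d'))$; and the product condition for $A$ is obtained by choosing a base object $c$ of ${\cal C}$ under which the two given arrows $d'\to A(d_1),\ d'\to A(d_2)$ become a parallel pair landing in the image of $A_c$, then invoking the second covering-flatness condition of $A_c$. Once $A$ is known to be a morphism of ordinary sites, $\tilde A$ is a morphism of sites by Proposition \ref{etaextensionproperties}(ii), each fibre $\tilde A(c)\simeq a_{K'_c}\lan_{A_c\op}i_{K_c}$ is cartesian because $A_c$ is covering-flat, so $\tilde A$ preserves fibrewise finite limits, whence $\tilde\phi$ is an isomorphism by Proposition \ref{etaextensionproperties}(iii); thus $C_p\circ\Sh(A)\simeq C_{p'}$ and (1) holds.

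The main obstacle is the step inside $(2)\Rightarrow(1)$ of upgrading ``$A_c$ is a morphism of sites for every $c$'' to ``$A$ is a morphism of ordinary sites'': the covering-flatness conditions for $A_c$ only see arrows lying in a single fibre $(p\downarrow c)$, while the corresponding conditions for $A$ involve arbitrary arrows of ${\cal D}'$ into the image of $A$, which need not sit over a common object of ${\cal C}$. The product condition is the delicate case, as it requires a cospan in ${\cal C}$ under the two relevant arrows: when ${\cal C}$ has a terminal object one may simply take $c=1$, and in general one reduces to this by passing to the comma category $(p\downarrow 1_{\cal C})$, using the density of ${\cal D}$ in it afforded by the functor $j_p$ of Theorem 3.18 of \cite{denseness} (see Proposition \ref{propbidense}) together with the fact that the functor over ${\cal C}$ assembled from the $A_c$ is itself a morphism of sites. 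Everything else is routine bookkeeping with the mate construction defining $\tilde\phi$ and with the componentwise computation of limits in $(1_{\Sh({\cal D},K)}\downarrow C_p^*)$ and $(1_{\Sh({\cal D}',K')}\downarrow C_{p'}^*)$.
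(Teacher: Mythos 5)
Your equivalence $(2)\Leftrightarrow(3)$ and your direction $(1)\Rightarrow(2)$ (via $\tilde A$ being simultaneously a morphism of sites and of fibrations, together with the identification $\tilde A(c)\simeq a_{K'_c}\lan_{A_c\op}i_{K_c}$ of Proposition \ref{morphsitesinducing}) are correct and essentially the paper's own argument. The problem is the step you yourself flag inside $(2)\Rightarrow(1)$: upgrading ``$A_c$ is a morphism of sites for every $c$'' to ``$A$ is a morphism of ordinary sites''. Your treatment of the nonemptiness condition (apply (3)(a) at $c=p'(d')$, $\chi=1$) and of the equalizer and cover-preservation conditions is fine, but the binary (product) condition is not proved. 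Condition (3)(b) only applies once one has a cospan $h_1:p(d_1)\to c\leftarrow p(d_2):h_2$ with $h_1\phi_{d_1}p'(u)=h_2\phi_{d_2}p'(v)$, and when $\cal C$ has no terminal object such a cospan need not exist. Your proposed reduction --- pass to $(p\downarrow 1_{\cal C})$ via $j_p$ and use ``the fact that the functor over $\cal C$ assembled from the $A_c$ is itself a morphism of sites'' --- is circular: that assembled functor $A_{\cal C}$ being a morphism of sites is essentially the global statement to be established, and verifying its own binary flatness condition runs into exactly the same obstruction (an object of $(p'\downarrow 1_{\cal C})$ with arrows to $A_{\cal C}(d_1,c_1,u_1)$ and $A_{\cal C}(d_2,c_2,u_2)$ comes with a span $c_1\leftarrow c'\to c_2$, not a cospan under $p(d_1)$, $p(d_2)$); moreover $(p\downarrow 1_{\cal C})$ has a terminal object only if $\cal C$ does, so the comma category does not ``create'' the missing base object.

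The paper closes this gap by a genuinely topos-level argument rather than a site-level one (see Remark \ref{remrelativemorphismimpliesordinarymorphism}): the morphisms of sites $A_c$ induce geometric morphisms $\Sh({\cal D}',K')/C_{p'}^{\ast}(\ell_Jc)\to \Sh({\cal D},K)/C_{p}^{\ast}(\ell_Jc)$ which are functorially compatible in $c$, and the stack (glueing) property of the canonical stack --- equivalently Proposition B3.1.5 of \cite{elephant}, identifying indexed geometric morphisms between the slice toposes with geometric morphisms over the colimit of the $C_p^{\ast}(\ell_Jc)$, which is the terminal object --- glues them into a geometric morphism $\Sh({\cal D}',K')\to\Sh({\cal D},K)$, whence $A$ is a morphism of ordinary sites and $\tilde A$, Proposition \ref{etaextensionproperties} and your concluding argument become available. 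So to repair your proof you should replace the $(p\downarrow 1_{\cal C})$-reduction for the product condition by this glueing argument (or some equivalent descent-type input); as it stands, that step is a genuine gap rather than routine bookkeeping.
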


\begin{proof}
    Points 2 and 3 are equivalent by the description of morphisms of sites as locally filtering cover-preserving functors. 
    
    If $(A,\phi)$ is a morphism of sites over $({\cal C},J)$, then the $\Tilde{A}(c)$ are geometric morphisms because of the charaterization in \ref{etaextensionproperties}. They are induced by the $A_c$ as explicited in \ref{morphsitesinducing}, which are hence morphisms of sites, \textit{i.e.} satisfying the equivalent points 2 and 3. 
    
    If the $A_c$ are morphisms of sites, they induce the geometric morphisms $\Tilde{A}(c)$, which respect finite limits, and again because of the characterization of \ref{etaextensionproperties}, $A$ is a morphism of sites over $({\cal C},J)$.
\end{proof}

\begin{remark}\label{remrelativemorphismimpliesordinarymorphism}
	The conditions of Proposition \ref{propmorphimrelmorphism}, for an arbitrary functor $A$ together with a natural transformation $\phi:p'\circ A \Rightarrow p$, actually imply that $A$ is a morphism of sites. Indeed, if the $A_{c}$ are  morphisms of sites for every $c\in {\cal C}$ then we have geometric morphisms
	\[
	\Sh(A_{c}):\Sh({\cal D}', K')\slash C_{p'}^{\ast}(l_Jc) \to \Sh({\cal D}, K)\slash C_{p}^{\ast}(l_Jc)
	\]
     which are functorially compatible with each other. By using the glueing property of the canonical stack of a topos, we can then glue these geometric morphisms to a geomeric morphism of the slice topos over the colimit of all the $C_p^*(l_Jc)$, that is the terminal object. Thus, we glue them into a geometric morphism 
	\[
	\Sh({\cal D}', K') \to \Sh({\cal D}, K)
	\]
	giving that $A$ is a morphism of sites (cf. Proposition B3.1.5 \cite{elephant}, which gives an equivalence between the category of relative geometric morphisms and that of indexed geometric morphisms between the slice toposes).
\end{remark}

\subsubsection{Formulation for relative sites}

The following result motivates our notion of morphism of relative sites, and, in particular, that of flat functor relative to a base site, appearing in our generalisation of Diaconescu's theorem in the relative setting (Theorem \ref{diaconescufibration}).

\begin{prop}\label{propmorphismfibrationsrelmorphismsites}
In the same setting as Proposition \ref{etaextensionproperties}, if $A$ is a morphism of fibrations then $\tilde{A}$ is a morphism of fibrations.
    In other words, every morphism of relative sites (in the sense of Definition \ref{defmorphismrelsites}(c)) induces a morphism of relative toposes.
\end{prop}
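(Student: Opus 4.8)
The plan is to reduce to a statement already available in Proposition~\ref{etaextensionproperties}. By part (iii)(b) of that proposition, ``$\tilde{A}$ is a morphism of fibrations'' is equivalent to ``$\tilde\phi$ is an isomorphism'', i.e.\ to ``$\Sh(A)$ is a morphism of relative toposes''; so it is enough to prove the latter. Since in the present setting $A$ is already a morphism of sites, and since $\phi$ is an isomorphism because $A$ is a morphism of fibrations, I would then invoke Theorem~\ref{thmcofinality} (equivalently, Proposition~\ref{propmorphimrelmorphism}): it suffices to verify, for every object $c$ of $\mathcal{C}$, the local surjectivity and local injectivity conditions (a) and (b) for the functor $A_c\colon (p\downarrow c)\to(p'\downarrow c)$, the cover-preservation and ``equalizer'' conditions being automatic from the hypothesis that $A$ is a morphism of sites. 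This is precisely the route announced in Remark~\ref{remcof}(c): the flat/filtering conditions for $A$ furnish the ``absolute'' forms of (a) and (b), and the fibrational hypotheses on $A$ --- invertibility of $\phi$ and preservation of cartesian arrows --- together with the vertical/cartesian factorisation of arrows in the fibration $p$, are what upgrade them to the relative ones.

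For local surjectivity, I would fix $d'\in\mathcal{D}'$ and an arrow $\chi\colon p'(d')\to c$ in $\mathcal{C}$. Using that $A$ is a morphism of sites, I first obtain a $K'$-covering family $\{g_i\colon d'_i\to d'\mid i\in I\}$ together with arrows $v_i\colon d'_i\to A(e_i)$ for suitable $e_i\in\mathcal{D}$. Next, since $p$ is a fibration, I take a cartesian lift $\kappa_i\colon k_i\to e_i$ of $\phi_{e_i}\circ p'(v_i)\colon p'(d'_i)\to p(e_i)$, so that $k_i$ lies over $p'(d'_i)$. Because $A$ preserves cartesian arrows, $A(\kappa_i)$ is cartesian in $\mathcal{D}'$, and naturality of $\phi$ (here one uses that $\phi$ is invertible) shows that $A(\kappa_i)$ lies over $p'(v_i)\circ\phi_{k_i}$; the universal property of $A(\kappa_i)$ then factors $v_i$ as $A(\kappa_i)\circ v_i'$ with $v_i'\colon d'_i\to A(k_i)$ lying over $\phi_{k_i}^{-1}$. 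Setting $u_i:=\chi\circ p'(g_i)\colon p(k_i)\to c$ (legitimate since $p(k_i)=p'(d'_i)$), the identity $u_i\circ\phi_{k_i}\circ p'(v_i')=\chi\circ p'(g_i)$ holds by construction, which is condition (a).

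Local injectivity is handled by the same device, one level up. Given two pairs $(v\colon d'\to A(k),\,u\colon p(k)\to c)$ and $(v'\colon d'\to A(k'),\,u'\colon p(k')\to c)$ with $u\circ\phi_{k}\circ p'(v)=u'\circ\phi_{k'}\circ p'(v')$, the second filtering condition for the morphism of sites $A$ supplies a $K'$-covering family $\{f_i\colon d'_i\to d'\}$, objects $e_i\in\mathcal{D}$, arrows $w_i\colon d'_i\to A(e_i)$ and arrows $s_i\colon e_i\to k$, $t_i\colon e_i\to k'$ with $v\circ f_i=A(s_i)\circ w_i$ and $v'\circ f_i=A(t_i)\circ w_i$. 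I would then replace $e_i$ by the cartesian lift $\kappa_i\colon k_i\to e_i$ of $\phi_{e_i}\circ p'(w_i)$ and factor $w_i=A(\kappa_i)\circ w_i'$ as before; the hypothesis relating $(u,v)$ and $(u',v')$, combined with naturality of $\phi$, forces $u\circ p(s_i\circ\kappa_i)=u'\circ p(t_i\circ\kappa_i)$, so that $(k_i,\,u\circ p(s_i\circ\kappa_i))$ is a well-defined object of $(p\downarrow c)$ receiving morphisms to $(k,u)$ and to $(k',u')$; the arrow $w_i'$ then exhibits $v\circ f_i$ and $v'\circ f_i$ as lying in the same connected component of $(d'_i\downarrow A\pi^p_c)$. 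Having checked (a) and (b), Theorem~\ref{thmcofinality} yields that $\Sh(A)$ is a morphism of relative toposes, and Proposition~\ref{etaextensionproperties}(iii) then gives that $\tilde{A}$ is a morphism of fibrations; the ``in other words'' clause is immediate, since a morphism of relative sites is by definition a morphism of sites which is moreover a morphism of fibrations. The main obstacle is exactly this bookkeeping: one cannot simply normalise $\phi$ to the identity, because $p$ and $p'$ are genuinely distinct fibrations over the common base, so one must track, via naturality of $\phi$, which arrow of $\mathcal{C}$ the cartesian arrow $A(\kappa_i)$ lies over, and then verify that the refined data really satisfy the identities demanded by conditions (a) and (b).
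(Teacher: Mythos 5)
Your proposal is correct and follows essentially the same route as the paper's own ``alternative proof'': verifying the local surjectivity and injectivity conditions of Theorem \ref{thmcofinality} by combining the absolute filtering conditions for the morphism of sites $A$ with cartesian lifts along $p$, preservation of cartesian arrows by $A$, and the (invertible) natural transformation $\phi$ to produce the missing indexing arrows --- exactly the vertical/horizontal factorisation manoeuvre the paper performs in indexed-category notation, and then concluding via Proposition \ref{etaextensionproperties}(iii). The only difference is presentational (you keep a general $\phi$ and phrase the argument with cartesian lifts, and you do not reproduce the paper's first argument via the commuting squares of transition functors $\chi^p_f$, $\chi^{p'}_f$), so no changes are needed.
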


\begin{proof}
Let us express $p$ and $p'$ as the fibrations respectively associated with indexed categories ${\mathbb D}$ and ${\mathbb D}'$. Slightly abusing notation, we shall denote by $A_{c}$ (for an object $c$ of $\cal C$) the functor ${\mathbb D}(c) \to {\mathbb D}'(c)$ induced by the morphism of fibrations $A$.
 We can clearly suppose, at the cost of taking left Kan extensions along the Yoneda embedding of $\cal C$ into $\hat{\cal C}$ (note that this operation sends morphisms of fibrations to morphisms of fibrations -- cf. section 3.2 of \cite{CaramelloZanfa} for Kan extensions of fibrations), that the category $\cal C$ has finite limits. 
    
    Under this assumption, we can show that, for any arrow $f:c\to c'$ in $\cal C$, we have a commutative diagram
\[\begin{tikzcd}
	{(p \downarrow c)} && {(p'\downarrow c)} \\
	{(p\downarrow c')} && {(p'\downarrow c')}
	\arrow["{A_{c'}}", from=2-1, to=2-3]
	\arrow["{A_{c}}", from=1-1, to=1-3]
	\arrow["{\chi^{p}_{f}}", from=2-1, to=1-1]
	\arrow["{\chi^{p'}_{f}}", from=2-3, to=1-3]
\end{tikzcd}\] 
    of morphisms of sites, inducing the commutative diagram

\[\begin{tikzcd}
	{\textup{\bf Sh}({\cal D}, K) {\slash} C_{p}^{\ast}(l_{J}(c))  } && {\Sh({\cal D}', K')\slash C_{p'}^{\ast}(l_{J}(c))} \\
	{\textup{\bf Sh}({\cal D}, K) {\slash} C_{p}^{\ast}(l_{J}(c'))} && {\Sh({\cal D}', K')\slash C_{p'}^{\ast}(l_{J}(c'))}
	\arrow["{\tilde{A}_{c}}", from=1-1, to=1-3]
	\arrow["{\tilde{A}_{c'}}", from=2-1, to=2-3]
	\arrow["{C_{p}^{\ast}(l_{J}(f))}", from=2-1, to=1-1]
	\arrow["{C_{p'}^{\ast}(l_{J}(f))}", from=2-3, to=1-3]
\end{tikzcd}\]
at the level of the associated toposes.

Indeed, the vertical morphisms $\chi^{p}_{f}$ and $\chi^{p'}_{f}$ in the first diagram above are defined as follows. They send an object $((c'', x), g:c''=p((c'', x)) \to c')$ (resp.  $((c'', x), g:c''=p'((c'', x)) \to c')$) of $(p\downarrow c')$ (resp. of $(p'\downarrow c')$) to the object $(e, {\mathbb D}(k)(x), h:e\to c)$ (resp. $(e, {\mathbb D}'(k)(x), h:e\to c)$) of $(p\downarrow c)$ (resp. of $(p'\downarrow c)$), where the following is a pullback square in $\cal C$:

\[\begin{tikzcd}
	e & c \\
	{c''} & {c'}
	\arrow["f", from=1-2, to=2-2]
	\arrow["g"', from=2-1, to=2-2]
	\arrow["k"', from=1-1, to=2-1]
	\arrow["h", from=1-1, to=1-2]
\end{tikzcd}\]

The above diagram commutes precisely since $A$ is a morphism of fibration, equivalentely, of indexed categories ${\mathbb D}\to {\mathbb D}'$ over $\cal C$ (this conditions entails that ${\mathbb D}'(k)((A_{c''}(x))\cong A_{e}({\mathbb D}(k)(x))$).

We can give an alternative proof by means of the cofinality conditions of Theorem \ref{thmcofinality}:

First, let us show that surjectivity requirement. Let $(x',c')$ be an object of the fibration $\cal D'$ and $u : c \to \overline{c}$. By the filtering characterization of $A$, we have a set $I$ together with a covering family $(u_i,g_i) 
: (x_i',c_i') \to (x,c)$ and objects $(x_i,c_i)$ of $\cal D$ with arrows $(v_i,h_i) : (x_i',c_i') \to A(x_i,c_i) $. To fulfill all the conditions of surjectivity, there remains to obtain an arrow indexing $A(x_i,c_i)$. In fact, since $A$ is a morphism of fibrations, we can use the action of $h_i$ through $\cal D$ seen as an indexed category $\mathbb D$, and, by the naturality of $A$ seen as an indexed functor, we are still working with an object coming from the functor:

\[\begin{tikzcd}
	{(x',c')} \\
	{(x_i',c_i')} && {A(x_i,c_i)} \\
	& {A({\mathbb D}(f)(x_i),c_i')}
	\arrow["{(u_i,g_i)}", from=2-1, to=1-1]
	\arrow["{(v_i,h_i)}"', from=2-1, to=2-3]
	\arrow["{(v_i,1)}"', from=2-1, to=3-2]
	\arrow["{(1,h_i)}"', from=3-2, to=2-3]
\end{tikzcd}\]

Where the bottom arrow is factored as a vertical and an horizontal arrow. Since we are reduced to work in a generalized element of $A$ in the fiber over $c_i$, we have now an obvious indexing arrow for it: $u \circ g_i$.

To show that the injectivity requirement also holds, let us take two generalized elements of $A$ indexed by some arrows $u_1 : c_1 \to \overline{c}$ and $u_2 : c_2 \to \overline{c}$, as follows:
\[\begin{tikzcd}
	& {A(x_1,c_1)} && {c_1} \\
	{(x',c')} && {c'} && {\overline{c}} \\
	& {A(x_2,c_2)} && {c_2}
	\arrow["{(v_1,f_1)}", from=2-1, to=1-2]
	\arrow["{(v_2,f_2)}"', from=2-1, to=3-2]
	\arrow["{u_1}", from=1-4, to=2-5]
	\arrow["{u_2}"', from=3-4, to=2-5]
	\arrow["{f_2}"', from=2-3, to=3-4]
	\arrow["{f_1}", from=2-3, to=1-4]
\end{tikzcd}\]

As hinted in the Remark \ref{remcof}(b), since $A$ is a morphism of sites, we may want to join locally these two indexed generalized elements by diagrams product coming from indexed generalized elements of $A$. By the filtering characterization of $A$ we obtain a set $I$ with a covering family $(w_i,g_i)$ such that we have the situation as in the following commutative square:

\[\begin{tikzcd}
	{(x',c')} & {A(x_1,c_1)} \\
	{(x_i',c_i')} & {A(x_i,c_i)}
	\arrow["{(v_1,f_1)}", from=1-1, to=1-2]
	\arrow["{(v_i,f_i)}"', from=2-1, to=2-2]
	\arrow["{(w_i,g_i)}", from=2-1, to=1-1]
	\arrow["{A(w_i^1,g_i^1)}"', from=2-2, to=1-2]
\end{tikzcd}\]

and same with $2$ instead of $1$. As for the surjectivity, we lack of indexing arrows for our generalized elements $(v_i,f_i)$. But, again, we can bring the $(x_i,c_i)$ into the fibers over $c_i'$ by the action of $\cal D$ as an indexed functor:

\[\begin{tikzcd}
	{(x',c')} & {A(x_1,c_1)} \\
	{(x_i',c_i')} & {A({\mathbb D}(f_i)(x_i),c_i')}
	\arrow["{(v_1,f_1)}", from=1-1, to=1-2]
	\arrow["{(v_i,1)}", from=2-1, to=2-2]
	\arrow["{(w_i,g_i)}", from=2-1, to=1-1]
	\arrow["{A(\mathbb D(f_i)(w_i^1),g_i^1f_i)}"', from=2-2, to=1-2]
\end{tikzcd}\]

where we used the factorization of the bottom arrow as an horizontal and a vertical arrow, together with the fact that $A$ sends cartesian arrows to cartesian arrows. Also, we have the same commutative square for $1$ replaced by $2$. Now, since we are in the fiber over $c_i'$, we have an obvious indexing arrow for $A({\mathbb D}(f_i)(x_i),c_i')$, given by: $u_1\circ f_1 \circ g_i$.

\end{proof}

\begin{defn}
	Let $f:{\cal F}\to \Sh({\cal C}, J)$ be a $\Sh({\cal C}, J)$-topos and ${\mathbb D}$ a $\cal C$-indexed category. We call a $\cal C$-indexed functor $A:\mathcal{G}({\mathbb D})\to (1_{\cal F} \downarrow f^{\ast} l_J)$ \emph{flat relative to $\Sh({\cal C}, J)$} when it yields a morphism of relative sites $({\cal G}({\mathbb D}), J_{\mathbb D})\to ((1_{\cal F} \downarrow f^{\ast} l_J), J_f)$.
	
	If $K$ is a Grothendieck topology on ${\cal G}({\mathbb D})$ containing $J_{\mathbb D}$ then a flat relative to $\Sh({\cal C}, J)$ functor $A$ is said to be \emph{$K$-continuous} if $A$ yields a morphism of relative sites $({\cal G}({\mathbb D}), K)\to ((1_{\cal F} \downarrow f^{\ast}l_J), J_f)$.    
\end{defn}

We notice that the cover-preservation condition, in the particular case of the Giraud topology, is automatically satisfied, since, as shown in \cite{denseness} Cor. 4.47., every morphism of fibrations is continuous with respect to the Giraud topologies. 

By the universal property of the comma category $(1_{\cal F} \downarrow f^{\ast} l_J)$, as in \ref{etaextensionproperties} (v), giving a functor $A:{\cal G}({\mathbb D})\to (1_{\cal F} \downarrow f^{\ast} l_J)$ amounts to give a functor $A_{\cal F}:{\cal G}({\mathbb D})\to {\cal F}$ together with a natural transformation $s:A_{\cal F}\to f^{\ast} l_J  p_{\mathbb D}$; for any $(x, c)\in {\cal G}({\mathbb D})$ we shall write $s_{x}:A_{\cal F}(x)\to f^{\ast}l_J(c)$. Here, we sum-up the description of a relative locally filtering functor between a fibration and the canonical relative site of a relative topos:
	
\begin{prop}\label{filtrantavaleurtopos}
	Let $f:{\cal F}\to \Sh({\cal C}, J)$ be a $\Sh({\cal C}, J)$-topos, $\mathcal{G}(\mathbb D)$ a $\cal C$-indexed category and $A:{\mathcal{G}(\mathbb D})\to (1_{\cal F} \downarrow f^{\ast} l_J)$ a morphism of fibrations. Then is flat relative to $\Sh({\cal C}, J)$ if and only if the following conditions are satisfied:
	\begin{enumerate}[(i)]
		\item For any generalized element $f:F\to f^{\ast}l_J(c)$, there are an epimorphic family $g_i : F_i \to F$	and, for each $i\in I$, a generalized element $\gamma_{i}:F_i\to A(x_{i}, c_i)$ and an arrow $u_i:c_i\to c$ such that $f^{\ast}l_J(u_i)\circ s_{x_{i}}\circ \gamma_{i}=g_i$:
	      
	    \[\begin{tikzcd}
	    	{F_i} & F & {f^{\ast}l_J(c)} \\
	    	& {A(x_i, c_i)} & {f^{\ast}l_J(c_i)}
	    	\arrow["f", from=1-2, to=1-3]
	    	\arrow["{g_i}", from=1-1, to=1-2]
	    	\arrow["{f^{\ast}l_J(u_i)}"', from=2-3, to=1-3]
	    	\arrow["{s_{x_i}}"', from=2-2, to=2-3]
	    	\arrow["{\gamma_i}"', from=1-1, to=2-2]
	    \end{tikzcd}\]

		\item For any $c\in {\cal C}$ and arrows $h_{1}: c_1 \to c$, $h_{2}:c_2\to c$, for any generalized element $a:F\to f^{\ast}l_J(c')$ and generalized elements $u:F\to A_{\cal F}(x_{1}, c_1)$ and $v:F\to A_{\cal F}(x_{2}, c_2)$ and arrows $a_{1}:c'\to c_1$ and $a_{2}:c' \to c_2$ such that $s_{x_{1}}\circ u=f^{\ast}l_J(a_1)\circ a$, $s_{x_{2}}\circ v=f^{\ast}l_J(a_2)\circ a$ and $h_1\circ a_{1}=h_2\circ a_{2}$,
		
		\[\begin{tikzcd}
			&&&&&& {A_{\cal F}(x_1, c_1)} \\
			& {c_1} &&& F && {f^{\ast}l_J(c_1)} \\
			{c'} && c && {f^{\ast}l_J(c')} && {A_{\cal F}(x_2, c_2)} \\
			& {c_2} &&&&& {f^{\ast}l_J(c_2)}
			\arrow["{h_1}", from=2-2, to=3-3]
			\arrow["{h_2}"', from=4-2, to=3-3]
			\arrow["{a_2}"', from=3-1, to=4-2]
			\arrow["{a_1}", from=3-1, to=2-2]
			\arrow["f"', from=2-5, to=3-5]
			\arrow["v", from=2-5, to=3-7]
			\arrow["{s_{x_2}}", from=3-7, to=4-7]
			\arrow["{f^{\ast}l_J(a_2)}"'{pos=0.3}, from=3-5, to=4-7]
			\arrow["{s_{x_1}}", from=1-7, to=2-7]
			\arrow["u", from=2-5, to=1-7]
			\arrow["{f^{\ast}l_J(a_1)}"'{pos=0.4}, from=3-5, to=2-7]
		\end{tikzcd}\]
		
		there exists a set $I$ such that we have an epimorphic family $g_i: F_i \to F$
		and, for each $i\in I$, a generalized element $\gamma_{i}:F_i\to A_{\cal F}(x_{i}, c_i)$, arrows $f_{i}:F_{i}\to f^{\ast}l_J(c'_i)$ in $\cal F$, $w_i:c_i'\to c'$ in $\cal C$, $z_i:c_i' \to c_i$ in $\cal C$, $(p_i, b_i^1):(x_{i}, c_i) \to (x_1, c_1)$ and $(q_i, b_i^2):(x_{i}, c_i) \to (x_2, c_2)$ in ${\cal G}({\mathbb D})$ such that $f\circ g_i=f^{\ast}l_J(w_i)\circ f_i$, $s_{x_i}\circ \gamma_i=f^{\ast}l_J(z_i)\circ f_i$, $h_1\circ b_i^1=h_2 \circ b_i^2$, $u\circ g_i=A_{\cal F}(p_i, b_i^1)\circ \gamma_i$, and $v\circ g_i=A_{\cal F}(q_i, b_i^2)\circ \gamma_i$:     
		
		\[\begin{tikzcd}
			&& {c_1} \\
			{c_i'} & {c'} & {c_i} & c \\
			&& {c_2}
			\arrow["{h_1}", from=1-3, to=2-4]
			\arrow["{h_2}"', from=3-3, to=2-4]
			\arrow["{a_2}"', from=2-2, to=3-3]
			\arrow["{a_1}", from=2-2, to=1-3]
			\arrow["{w_i}", from=2-1, to=2-2]
			\arrow["{b_i^1}", from=2-3, to=1-3]
			\arrow["{b_i^2}"', from=2-3, to=3-3]
			\arrow["{z_i}"'{pos=0.4}, bend right=12, from=2-1, to=2-3]
		\end{tikzcd}\]

		\[\begin{tikzcd}
			&&&& {A_{\cal F}(x_1, c_1)} \\
			&&&& {f^{\ast}l_J(c_1)} \\
			{F_i} && F & {A_{\cal F}(x_i, c_i)} \\
			{f^{\ast}l_J(c'_i)} && {f^{\ast}l_J(c')} & {f^{\ast}l_J(c_i)} \\
			&&&& {A_{\cal F}(x_2, c_2)} \\
			&&&& {f^{\ast}l_J(c_2)}
			\arrow["f"', from=3-3, to=4-3]
			\arrow["{s_{x_1}}", from=1-5, to=2-5]
			\arrow["u", from=3-3, to=1-5]
			\arrow["{f^{\ast}l_J(a_1)}"{pos=0.6}, from=4-3, to=2-5]
			\arrow["{f_i}", from=3-1, to=4-1]
			\arrow["{g_i}", from=3-1, to=3-3]
			\arrow["{f^{\ast}l_J(w_i)}", from=4-1, to=4-3]
			\arrow["{s_{x_i}}", from=3-4, to=4-4]
			\arrow["{f^{\ast}l_J(b_i^1)}"'{pos=0.6}, from=4-4, to=2-5]
			\arrow["{f^{\ast}l_J(z_i)}"', bend right=18, from=4-1, to=4-4]
			\arrow["{\gamma_i}"{pos=0.2}, bend left=-18, from=3-1, to=3-4]
			\arrow["{A_{\cal F}(p_i, b_i^1)}", from=3-4, to=1-5]
			\arrow["{s_{x_{2}}}", from=5-5, to=6-5]
			\arrow["{A_{\cal F}(q_i, b^2_i)}"{pos=0.7}, from=3-4, to=5-5]
			\arrow["{f^{\ast}l_J(b^2_i)}"', from=4-4, to=6-5]
			\arrow["v"{pos=0.7}, from=3-3, to=5-5]
			\arrow["{f^{\ast}l_J(a_2)}"'{pos=0.3}, from=4-3, to=6-5]
		\end{tikzcd}\]

		\item For any pair of objects and arrows in $\mathcal{G}(\mathbb D)$: $(k,u), (k',u'): (x_1,c_1) \to (x_2,c_2)$, and any object of the canonical relative site of $f$, $a: F \to f^*l_J(c)$, together with a morphism:  $h: c \to c_1$, and $v: F \to A_{\cal F}(x_1,c_1)$ such that $s_{x_1} \circ v = f^*l_J(h) \circ a$, there exist a set $I$ with an epimorphic family $g_i: F_i \to F$, objects $c_i$ of $\cal C$ and arrows $a_i: F_i \to f^*l_J(c_i)$ and $g_i': c_i \to c$ such that $f^*l_J(g_i') \circ a_i = a \circ g_i$, and objects of the comma in the image of $A$: $s_{x_i}: A_{\cal F}(x_i,c_i') \to f^*l_J(c_i')$ and arrows  $h_i: c_i \to c_i'$ and $v_i: F_i \to A_{\cal F}(x_i,c_i')$ such that $s_{x_i} \circ v_i = a_i \circ f^*l_J(h_i)$ with finally a morphism in $(e_i,u_i)$ in $\mathcal{G}(\mathbb D)$ such that $h_i \circ e_i = h \circ g_i'$ and, $v \circ g_i = A_{\cal F}(e_i,u_i') \circ v_i$ as we can visualize in the diagram:

        \[\begin{tikzcd}
	&& F \\
	& {f^*l_J(c)} &&& {A_{\cal F}(x_1,c_1)} \\
	& {F_i} && {f^*l_J(c_1)} \\
	{f^*l_J(c_i)} &&& {A_{\cal F}(x_i,c_i')} \\
	&& {f^*l_J(c_i')}
	\arrow["v", from=1-3, to=2-5]
	\arrow["{f^*l_J(h)}"', from=2-2, to=3-4]
	\arrow["a"', from=1-3, to=2-2]
	\arrow["{s_{x_1}}"', from=2-5, to=3-4]
	\arrow["{f^*l_J(g_i')}", from=4-1, to=2-2]
	\arrow["{g_i}"', from=3-2, to=1-3]
	\arrow["{f^*l_J(e_i)}"{pos=0.3}, from=5-3, to=3-4]
	\arrow["{A_{\cal F}(e_i,u_i)}"', from=4-4, to=2-5]
	\arrow["{v_i}", from=3-2, to=4-4]
	\arrow["{f^*l_J(h_i)}", from=4-1, to=5-3]
	\arrow["{a_i}", from=3-2, to=4-1]
	\arrow["{s_{x_i}}", from=4-4, to=5-3]
        \end{tikzcd}\]

        and such that $k \circ e_i = k' \circ e_i$ and that $\mathbb D(e_i)(u)\circ u_i = \mathbb D(e_i)(u') \circ u_i $. 
		
	\end{enumerate} 
	
	If $K$ is a Grothendieck topology on ${\cal G}({\mathbb D})$ contanining $J_{\mathbb D}$ then a flat relative to $\Sh({\cal C}, J)$ functor $A$ is $K$-continuous if and only if $A$ is moreover cover-preserving as a functor $({\cal G}({\mathbb D}), K)\to ((1_{\cal F} \downarrow f^{\ast} l_J), J_f)$. 
\end{prop}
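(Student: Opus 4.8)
The plan is to read off the statement from Proposition~\ref{propmorphimrelmorphism}, specialised to the comorphisms $p=p_{\mathbb D}\colon({\cal G}({\mathbb D}),J_{\mathbb D})\to({\cal C},J)$ and $p'=\pi\colon((1_{\cal F}\downarrow f^{\ast}l_J),J_f)\to({\cal C},J)$, with $\phi$ the (invertible) structural natural transformation witnessing that $A$ is a morphism of fibrations. First I would record the reduction: since $A$ is a morphism of fibrations, ``$A$ flat relative to $\Sh({\cal C},J)$'' means, by definition, that $A$ is a morphism of relative sites, which by Definition~\ref{defmorphismrelsites} together with Proposition~\ref{propmorphismfibrationsrelmorphismsites} is equivalent to $A$ being an ordinary morphism of sites $({\cal G}({\mathbb D}),J_{\mathbb D})\to((1_{\cal F}\downarrow f^{\ast}l_J),J_f)$ — equivalently, as $\phi$ is an isomorphism, a lax morphism of sites over $({\cal C},J)$. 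By Proposition~\ref{propmorphimrelmorphism} this amounts to $A_c\colon(p_{\mathbb D}\downarrow c)\to(\pi\downarrow c)$ being a morphism of sites for every $c\in{\cal C}$, equivalently to conditions (a)--(d) in its part~3; the cover-preservation condition (d), for the Giraud topology $J_{\mathbb D}$, is automatic, every morphism of fibrations being continuous for the Giraud topologies (\cite{denseness}, Cor.~4.47). So it remains to see that, for every $c$, the three remaining morphism-of-sites conditions for $A_c$ are jointly equivalent to conditions (i), (ii), (iii) of the statement, and to treat the cover-preservation separately.

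For the translation I would fix the dictionary between the two pictures. An object of $(1_{\cal F}\downarrow f^{\ast}l_J)$ is a triple $(G,c,u\colon G\to f^{\ast}l_J(c))$; a morphism carries an ${\cal F}$-component and a ${\cal C}$-component subject to the evident square, the projection $\pi$ reads off the ${\cal C}$-component, and a family is $J_f$-covering precisely when its ${\cal F}$-components form an epimorphic family. By the universal property of the comma category (cf.\ Proposition~\ref{etaextensionproperties}(v)) the functor $A$ is the datum of $A_{\cal F}\colon{\cal G}({\mathbb D})\to{\cal F}$ together with $s\colon A_{\cal F}\Rightarrow f^{\ast}l_J\, p_{\mathbb D}$, so that $A(x,c)=(A_{\cal F}(x,c),c,s_{x})$; one has $\Sh((1_{\cal F}\downarrow f^{\ast}l_J),J_f)\simeq{\cal F}$ and $C_{\pi}^{\ast}(l_J(c))\cong f^{\ast}l_J(c)$, and by Proposition~\ref{morphsitesinducing} the fibre functor $\tilde A(c)$ is induced by $A_c$ as $a_{K'_c}\lan_{A_c\op}i_{K_c}$, hence preserves colimits and is computed on representables through $A_c$; in particular $\Sh(A)^{\ast}C_{p_{\mathbb D}}^{\ast}(l_J(c))$ is the colimit over $(p_{\mathbb D}\downarrow c)$ of the composite $(p_{\mathbb D}\downarrow c)\to{\cal G}({\mathbb D})\xrightarrow{A_{\cal F}}{\cal F}$, the leg at $((x,c''),v\colon c''\to c)$ being $f^{\ast}l_J(v)\circ s_{x}\colon A_{\cal F}(x,c'')\to f^{\ast}l_J(c)$.

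With this dictionary the first, second and third morphism-of-sites conditions for $A_c$ unwind respectively into (i), (ii), (iii). In the easy direction — testing the conditions for $A_c$ on the objects of $(\pi\downarrow c)$ with identity indexing arrow, i.e.\ on the generalized elements $F\to f^{\ast}l_J(c)$ — the ${\cal C}$-components of the comma-category arrows supplied by the three conditions furnish exactly the auxiliary arrows $u_i$ of (i), the arrows $a_1,a_2,h_1,h_2,w_i,z_i$ of (ii) and the arrows $h,g'_i,h_i,e_i,u_i$ of (iii), and the commuting squares attached to those arrows, combined with the colimit/pullback/equalizer comparisons, give the displayed equations. In the other direction one is handed, by (i), (ii), (iii), data living only over generalized elements of various $f^{\ast}l_J(c_i)$; to reconstruct the conditions for $A_c$, which quantify over objects $(d',\chi)$ of $(\pi\downarrow c)$ lying over arbitrary objects of ${\cal C}$, one uses that $A$ is a morphism of fibrations: each generalized element is reindexed into the fibre over the appropriate object of ${\cal C}$ by the action of ${\mathbb D}$ and the cartesian-arrow preservation of $A$, at which point the missing indexing arrows become available — this is the same manoeuvre performed in the proof of Proposition~\ref{propmorphismfibrationsrelmorphismsites}. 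For (iii) one additionally invokes the factorisation of an arrow of ${\cal G}({\mathbb D})$ as its ${\cal C}$-component followed by a vertical arrow, compatibly with $A$; this is what splits the equalizing requirement into the two separate conditions $k\circ e_i=k'\circ e_i$ and ${\mathbb D}(e_i)(u)\circ u_i={\mathbb D}(e_i)(u')\circ u_i$.

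Finally, enlarging the topology on ${\cal G}({\mathbb D})$ from $J_{\mathbb D}$ to any $K\supseteq J_{\mathbb D}$ leaves (i), (ii), (iii) unchanged, as they only involve $A$, the fibration structures and the target topology $J_f$; hence, applying Proposition~\ref{propmorphimrelmorphism} once more with $K$ in place of $J_{\mathbb D}$, the functor $A$ yields a morphism of relative sites $({\cal G}({\mathbb D}),K)\to((1_{\cal F}\downarrow f^{\ast}l_J),J_f)$ — i.e.\ is $K$-continuous — if and only if it is in addition cover-preserving as such a functor, which is the asserted condition. The step I expect to be the real work is the unwinding of the third paragraph, and in particular the implication from (i)--(iii) to the morphism-of-sites conditions for the $A_c$: keeping track of the two components of every comma-category arrow and of their commutativities across the colimit, pullback and equalizer comparisons is long bookkeeping, and the reindexing device using the fibration structure of $A$ — which is precisely what bridges the generalized-element formulation of (i)--(iii) and the ``$(d',\chi)$'' formulation of the morphism-of-sites conditions for $A_c$ — has to be set up with care, most delicately in the case of the equalizer condition (iii).
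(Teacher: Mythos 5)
Your proposal is correct and follows essentially the same route as the paper, which states this proposition without a written proof precisely because it is intended as the ``summed-up'' specialization of Proposition~\ref{propmorphimrelmorphism} (together with Propositions~\ref{propcond1relmorphismofsites}--\ref{propcond4relmorphismofsites} and \ref{propmorphismfibrationsrelmorphismsites}, and the automatic Giraud cover-preservation from \cite{denseness}) to the comorphisms $p_{\mathbb D}$ and $\pi\colon(1_{\cal F}\downarrow f^{\ast}l_J)\to{\cal C}$, with $J_f$-covers read as epimorphic families in $\cal F$. Your dictionary and the reindexing device for the converse direction are exactly the intended unwinding, so the only difference is that you make explicit the bookkeeping the paper leaves implicit.
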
\qed	

To prove the next theorem, we will need the following result. Let us denote $p_{\mathbb D}^{K}$ for $p_{\mathbb D}$ regarded as a comorphism of sites $({\cal G}({\mathbb D}), K)\to ({\cal C}, J)$:

\begin{prop}\label{fibrationeta}
    
Let $p^K_{\mathbb D}: (\mathcal{G}({\mathbb D}),K) \to ({\cal C},J)$ be a relative site. Then the functor $\eta_{\mathcal{G}(\mathbb D)}: (\mathcal{G}(\mathbb D),K) \to ((1_{\Sh(\mathcal{G}(\mathbb D),K)}\downarrow C_{p^K_{\mathbb D}}^*l_J),J_{C_{p^K_{\mathbb D}}})$ is a cover-preserving morphism of fibrations. 
\end{prop}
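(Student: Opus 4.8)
The plan is to verify separately the three ingredients of the statement: that $\eta_{\mathcal{G}({\mathbb D})}$ commutes with the projections to $\cal C$ (with identical, hence isomorphic, structural $2$-cell), that it sends cartesian arrows to cartesian arrows, and that it is cover-preserving for $K$ and $J_{C_{p^K_{\mathbb D}}}$. By Remark \ref{remarkseta}(a) we regard $\eta_{\mathcal{G}({\mathbb D})}$ as taking values in $(1_{\Sh(\mathcal{G}({\mathbb D}),K)}\downarrow C_{p^K_{\mathbb D}}^{\ast}\ell_J)$, a fibration over $\cal C$ via the projection $\pi_{\cal C}$ to the $\cal C$-component, whose cartesian arrows over $f\colon c'\to c$ are exactly the pullback squares (cf.\ the discussion preceding Theorem \ref{thm:relativesitegeometricmorphism}). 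By the explicit description in Proposition \ref{propbidense}, $\eta_{\mathcal{G}({\mathbb D})}$ sends an object $(c,x)$ of $\mathcal{G}({\mathbb D})$ to $(\ell_K(c,x),\ c,\ x_{(c,x)}\colon \ell_K(c,x)\to C_{p^K_{\mathbb D}}^{\ast}(\ell_J c))$ and a morphism $(u,\xi)\colon (b,y)\to (c,x)$ to the pair $(\ell_K(u,\xi),\ u)$; hence $\pi_{\cal C}\circ\eta_{\mathcal{G}({\mathbb D})}=p^K_{\mathbb D}$ on the nose, which settles the first point.

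For cover-preservation, recall from Theorem \ref{thm:relativesitegeometricmorphism} (restricted to the subcategory at hand) that a family $\{(a_i,u_i)\}$ is $J_{C_{p^K_{\mathbb D}}}$-covering precisely when the family $\{a_i\}$ of first components is epimorphic in $\Sh(\mathcal{G}({\mathbb D}),K)$. If $S$ is a $K$-covering sieve on $(c,x)$, then by the standard characterization of covering sieves the family $\{\ell_K(s)\mid s\in S\}$ is epimorphic in $\Sh(\mathcal{G}({\mathbb D}),K)$; since the first component of $\eta_{\mathcal{G}({\mathbb D})}(s)$ is $\ell_K(s)$, the sieve generated by $\eta_{\mathcal{G}({\mathbb D})}(S)$ contains a family with epimorphic first components and is thus $J_{C_{p^K_{\mathbb D}}}$-covering.

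The substantive step is the preservation of cartesian arrows. Up to composition with a vertical isomorphism (which $\eta_{\mathcal{G}({\mathbb D})}$ preserves) every cartesian arrow of $\mathcal{G}({\mathbb D})$ has the form $(f,\id)\colon (c',{\mathbb D}(f)(x))\to (c,x)$ for some $f\colon c'\to c$ in $\cal C$, so we must show that the square
\[
\begin{tikzcd}
\ell_K(c',{\mathbb D}(f)(x)) \ar[r, "\ell_K(f,\id)"] \ar[d, "x_{(c',{\mathbb D}(f)(x))}"'] & \ell_K(c,x) \ar[d, "x_{(c,x)}"] \\
C_{p^K_{\mathbb D}}^{\ast}(\ell_J c') \ar[r, "C_{p^K_{\mathbb D}}^{\ast}(\ell_J f)"'] & C_{p^K_{\mathbb D}}^{\ast}(\ell_J c)
\end{tikzcd}
\]
is a pullback in $\Sh(\mathcal{G}({\mathbb D}),K)$. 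I would obtain this at the level of presheaves on $\mathcal{G}({\mathbb D})$: writing $\yo$ for the Yoneda embedding of $\mathcal{G}({\mathbb D})$ and using the identifications of Proposition \ref{propbidense} --- namely $\ell_K(d)=a_K\yo(d)$, the isomorphism $C_{p^K_{\mathbb D}}^{\ast}(\ell_J c)\cong a_K(\cbicat(p^K_{\mathbb D}(-),c))$ natural in $c$, and the description of $x_d$ as the $K$-sheafification of the evaluation-at-identity map $\yo(d)\to\cbicat(p^K_{\mathbb D}(-),p^K_{\mathbb D}(d))$ --- the displayed square is the image under the left-exact functor $a_K$ of the square of presheaves obtained by applying to $(f,\id)$ the presheaf-level version of $\eta$. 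It therefore suffices to check that this presheaf square is a pullback, which is a pointwise verification: evaluating at an object $(b,y)$ of $\mathcal{G}({\mathbb D})$, the pullback in $\Set$ of the relevant cospan is the set of pairs $(w\colon b\to c',\ \xi\colon y\to {\mathbb D}(fw)(x))$, and the coherence isomorphism ${\mathbb D}(w)({\mathbb D}(f)(x))\cong {\mathbb D}(fw)(x)$ identifies this set, compatibly with the projections, with $\yo(c',{\mathbb D}(f)(x))(b,y)$. Hence the presheaf square is a pullback; applying $a_K$ yields the displayed pullback, which is exactly $\eta_{\mathcal{G}({\mathbb D})}(f,\id)$, so this morphism is cartesian.

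I expect the main obstacle to be the bookkeeping in this last step --- keeping careful track of the adjunction between $a_K$ and the inclusion, of the isomorphism $C_{p^K_{\mathbb D}}^{\ast}(\ell_J c)\cong a_K(\cbicat(p^K_{\mathbb D}(-),c))$ and its naturality in $c$, and of the formula for $x_{(-)}$ --- so as to be certain that the pullback square obtained after sheafification is genuinely $\eta_{\mathcal{G}({\mathbb D})}$ applied to the cartesian arrow, rather than merely isomorphic to it in an uncontrolled way. The remaining two points are immediate consequences of the formulas in Proposition \ref{propbidense} and the description of $J_{C_{p^K_{\mathbb D}}}$ in Theorem \ref{thm:relativesitegeometricmorphism}.
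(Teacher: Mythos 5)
Your proposal is correct and follows essentially the same route as the paper: the paper also reduces the preservation of cartesian arrows to the hom-set (i.e.\ pointwise presheaf-level) squares being pullbacks --- your pointwise computation at $(b,y)$ is exactly the paper's displayed square --- and then applies the left-exact sheafification $a_K$, with cover-preservation noted as immediate. Your version merely spells out the bookkeeping (commutation with the projections to $\cal C$, reduction of cartesian arrows to the form $(f,\id)$, and the epimorphic-family description of $J_{C_{p^K_{\mathbb D}}}$) that the paper leaves implicit.
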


\begin{proof}
The cover-preservation is immediate, and the fact that it is a morphism of fibrations follows from the following diagrams (for any $(x',c')$,  $(\overline{x},\overline{c})$ and $g: c \to c'$) being pullbacks: 
\[\begin{tikzcd}	{\mathcal{G}(\mathbb D)((\overline{x},\overline{c}),(\mathbb D(g)(x'),c)} & {{\cal C}(\overline{c},c)} \\	{\mathcal{G}(\mathbb D)((\overline{x},\overline{c}),(x',c'))} & {{\cal C}(\overline{c},c')}	\arrow["{(1,g)\circ -}", from=1-1, to=2-1]	\arrow["{p_{\mathbb D}}(-)"', from=1-1, to=1-2]	\arrow["{p_{\mathbb D}}(-)", from=2-1, to=2-2]	\arrow["{g \circ -}"', from=1-2, to=2-2] \end{tikzcd}\]
We just need to sheafify (respecting finite limits) and we obtain that the image of a cartesian arrow is cartesian.
\end{proof}

\begin{thm}\label{diaconescufibration}
	Let $f:{\cal F}\to \Sh({\cal C}, J)$ be a $\Sh({\cal C}, J)$-topos, ${\mathbb D}$ a $\cal C$-indexed category. Then we have an equivalence
	\[
	\textup{\bf Geom}_{\Sh({\cal C}, J)}([f], [C_{p_{\mathbb D}}])\simeq \textup{\bf Flat}_{\Sh({\cal C}, J)}({\cal G}({\mathbb D}), (1_{\cal F} \downarrow f^{\ast}l_J))
	\]
	between the category of geometric morphisms, relative to $\Sh({\cal C}, J)$, from $f$ to $C_{p_{\mathbb D}}$, and the category of flat relative to $\Sh({\cal C}, J)$ functors from ${\cal G}({\mathbb D})$ to $(1_{\cal F} \downarrow f^{\ast}l_J)$.
	
	If $K$ is a Grothendieck topology on ${\cal G}({\mathbb D})$ containing the Giraud topology $J_{\mathbb D}$ then the above equivalence restricts to an equivalence  
		\[
	\textup{\bf Geom}_{\Sh({\cal C}, J)}([f], [C_{p^{K}_{\mathbb D}}])\simeq \textup{\bf Flat}^{K}_{\Sh({\cal C}, J)}({\cal G}({\mathbb D}), (1_{\cal F} \downarrow f^{\ast}l_J)),
	\]
	where $\textup{\bf Flat}^{K}_{\Sh({\cal C}, J)}({\cal G}({\mathbb D}), (1_{\cal F}\downarrow f^{\ast}l_J))$ is the full subcategory of the category $\textup{\bf Flat}_{\Sh({\cal C}, J)}({\cal G}({\mathbb D}), (1_{\cal F}\downarrow f^{\ast}l_J))$ on the $K$-continuous functors. 
\end{thm}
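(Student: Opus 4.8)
I would prove the theorem by exhibiting a pair of mutually quasi‑inverse functors between its two sides, built from the $\eta$‑extension construction of Proposition \ref{etaextensionproperties} and from the relative Yoneda functor $\eta_{p^K_{\mathbb D}}$, whose cover‑preservation and — crucially — cartesianness are supplied by Proposition \ref{fibrationeta}. Fix notation first: write $p':((1_{\cal F}\downarrow f^*l_J),J_f)\to({\cal C},J)$ for the projection of the restricted canonical relative site of $f$, noting (Theorem \ref{thm:relativesitegeometricmorphism} and Remark \ref{remarkseta}(a)) that $p'$ is a relative site over $({\cal C},J)$ and that $C_{p'}$ is canonically equivalent to $f$ as a $\Sh({\cal C},J)$‑topos. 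For the functor $\Psi$ from flat relative functors to geometric morphisms, I would argue as follows: a functor $A:{\cal G}({\mathbb D})\to(1_{\cal F}\downarrow f^*l_J)$ flat relative to $\Sh({\cal C},J)$ is, by definition, a morphism of relative sites $({\cal G}({\mathbb D}),J_{\mathbb D})\to((1_{\cal F}\downarrow f^*l_J),J_f)$ — or, in the refined statement, one starting from $({\cal G}({\mathbb D}),K)$ — so Proposition \ref{propmorphismfibrationsrelmorphismsites} gives that $\Sh(A):\Sh((1_{\cal F}\downarrow f^*l_J),J_f)\to\Sh({\cal G}({\mathbb D}),K)$ is a morphism of relative toposes; transporting its source along $\Sh((1_{\cal F}\downarrow f^*l_J),J_f)\simeq{\cal F}$ produces a geometric morphism ${\cal F}\to\Sh({\cal G}({\mathbb D}),K)$ over $\Sh({\cal C},J)$, functorially in $A$.

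For the functor $\Theta$ in the other direction, given $g:{\cal F}\to\Sh({\cal G}({\mathbb D}),K)$ with a chosen isomorphism $C_{p^K_{\mathbb D}}\circ g\simeq f$ exhibiting it over $\Sh({\cal C},J)$, I would set $A_g:=\overline{g^*}\circ\eta_{p^K_{\mathbb D}}$, the composite
\[
{\cal G}({\mathbb D})\ \xrightarrow{\ \eta_{p^K_{\mathbb D}}\ }\ \bigl(1_{\Sh({\cal G}({\mathbb D}),K)}\downarrow C^*_{p^K_{\mathbb D}}l_J\bigr)\ \xrightarrow{\ \overline{g^*}\ }\ \bigl(1_{\cal F}\downarrow f^*l_J\bigr),
\]
where $\overline{g^*}$ applies the left‑exact, epimorphism‑preserving inverse image $g^*$ to the first component and uses $g^*C^*_{p^K_{\mathbb D}}\simeq f^*$. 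By Proposition \ref{fibrationeta}, $\eta_{p^K_{\mathbb D}}$ is a cover‑preserving morphism of fibrations, and it is a morphism of sites (Proposition \ref{propbidense}); the functor $\overline{g^*}$ is likewise a morphism of fibrations (because $g^*$ preserves pullbacks), cover‑preserving (because $g^*$ preserves epimorphic families), and a morphism of sites inducing the geometric morphism $g$ itself, being $g^*$ applied fibrewise — here I would reduce, as in the proof of Proposition \ref{propmorphismfibrationsrelmorphismsites}, to the case where ${\cal C}$ has finite limits, so that finite limits in the comma categories are computed componentwise. Hence $A_g$ is a morphism of relative sites, so — after the usual rectification to a genuine ${\cal C}$‑indexed functor — flat relative to $\Sh({\cal C},J)$, and $K$‑continuous since both factors are cover‑preserving; this is again functorial.

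Finally I would verify that $\Psi$ and $\Theta$ are quasi‑inverse. For $\Psi\Theta\simeq\mathrm{id}$, decompose $\Sh(A_g)\simeq\Sh(\eta_{p^K_{\mathbb D}})\circ\Sh(\overline{g^*})$: the first factor is the Morita equivalence of Proposition \ref{propbidense}, and $\Sh(\overline{g^*})$ is identified with $g$ under the canonical relative‑site equivalences, since $\overline{g^*}$ restricts to $g^*$ on the fibres over the objects of ${\cal C}$ and these fibrewise data determine the geometric morphism (the glueing argument recalled in Remark \ref{remrelativemorphismimpliesordinarymorphism}). For $\Theta\Psi\simeq\mathrm{id}$, writing $g_A=\Psi(A)$, I would apply Proposition \ref{etaextensionproperties}(iv) with the invertible comparison $\phi=\mathrm{id}$ to obtain $\tilde A\circ\eta_{p^K_{\mathbb D}}\simeq\eta_{p'}\circ A$, and then use that $\eta_{p'}$, the relative Yoneda functor of the canonical relative site itself, is isomorphic to the identity of $(1_{\cal F}\downarrow f^*l_J)$ under $\Sh((1_{\cal F}\downarrow f^*l_J),J_f)\simeq{\cal F}$, while $\tilde A$ acts on first components as $\Sh(A)^*=g_A^*$ by Proposition \ref{etaextensionproperties}(i) and hence agrees with $\overline{g_A^*}$, so that $\Theta(g_A)=\overline{g_A^*}\circ\eta_{p^K_{\mathbb D}}\simeq\tilde A\circ\eta_{p^K_{\mathbb D}}\simeq A$. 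The refined statement would follow with the same argument: Propositions \ref{propmorphismfibrationsrelmorphismsites} and \ref{fibrationeta} already concern an arbitrary relative site $p^K_{\mathbb D}$, and the $K$‑continuity of $A$ translates, along this correspondence, into the fact that $g$ is defined on $\Sh({\cal G}({\mathbb D}),K)$ rather than merely on $\Gir_J({\mathbb D})=\Sh({\cal G}({\mathbb D}),J_{\mathbb D})$ (Propositions \ref{propcond4relmorphismofsites} and \ref{propmorphimrelmorphism}).

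The hard part is the construction of $\Theta$ together with the round trip $\Theta\Psi\simeq\mathrm{id}$: I must ensure that $A_g$ is a morphism of \emph{relative} sites — not merely an ordinary morphism of sites inducing a morphism of relative toposes, which, as stressed in the remark following Theorem \ref{thm:RelativeDiaconescumorphismsrelativesites}, is strictly weaker in general for morphisms of sites between fibrations — and this is exactly where the cartesianness of $\eta_{p^K_{\mathbb D}}$ in Proposition \ref{fibrationeta}, together with the comparison of Proposition \ref{etaextensionproperties}(iv), is indispensable. The remaining work — naturality of the classification correspondence of Section 4.2 of \cite{CaramelloZanfa}, coherence of the structural $2$‑cells, the bookkeeping for natural transformations on both sides, and the passage between $(1_{\cal F}\downarrow f^*)$ and its dense subcategory $(1_{\cal F}\downarrow f^*l_J)$ — I expect to be routine given Propositions \ref{etaextensionproperties}, \ref{morphsitesinducing}, \ref{propmorphimrelmorphism} and \ref{filtrantavaleurtopos}.
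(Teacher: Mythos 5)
Your proposal is correct and follows essentially the same route as the paper: the direction from flat relative functors to geometric morphisms via Proposition \ref{propmorphismfibrationsrelmorphismsites} (composing with the projection to ${\cal F}$), and the inverse direction given by your $\overline{g^*}\circ\eta_{p^K_{\mathbb D}}$ is exactly the paper's $\widetilde{a^*l_K}\circ\eta_{\mathcal{G}(\mathbb D)}$, i.e.\ the $\eta$-extension of $a^*l_K$ precomposed with the relative Yoneda functor, justified by Propositions \ref{etaextensionproperties} and \ref{fibrationeta}. The round-trip verifications likewise rest, as in the paper, on the commuting square of Proposition \ref{etaextensionproperties}(iv) and on the fact that the relevant projections and $\eta$-functors are dense (Morita) equivalences at the topos level.
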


\begin{proof}

In one direction, given a flat functor relative to $\Sh({\cal C},J)$, that is, a morphism of fibrations $A: {\cal G}({\mathbb D}) \to (1_{\cal F}\downarrow f^{\ast}l_J)$ which is also a morphism of ordinary sites, we know from from Proposition \ref{propmorphismfibrationsrelmorphismsites} that it induces a relative geometric morphism over $\Sh({\cal C},J)$, namely $\Sh(A\pi_{\cal F})$. 

For the other direction, let us exhibit a pseudo-inverse to this construction. Let $a: [f] \to [C_p] $ be a relative geometric morphism, by pre-composition of its inverse image with $l_K$, we get: $ a^*l_K: \mathcal{G}(\mathbb D) \to \cal F$. Then, we take its $\eta$-extension: $ \widetilde{a^*l_K}: (1_{\Sh(\mathcal{G}(\mathbb D),J_{\mathbb D})} \downarrow C_{p_{\mathbb D}}^*l_J) \to (1_{\cal F} \downarrow f^*l_J) $, which is a morphism of fibrations in the light of Proposition \ref{etaextensionproperties}. Finally, we precompose it with $\eta_{\mathcal{G}(\mathbb D)}$ to get a morphism of fibrations (by Proposition\ref{fibrationeta}): $\widetilde{a^*l_K} \circ \eta_{\mathcal{G}(\mathbb D)}: \mathcal{G}(\mathbb D) \to (1_{\cal F} \downarrow f^*l_J)$. This is in fact a morphism of relative sites: we see that $\pi_{\Sh({\cal C},J)} \circ \widetilde{a^*l_K} \circ \eta_{\mathcal{G}(\mathbb D)} = a^*l_K$ is a morphism of sites, but $\pi_{\Sh({\cal C},J)}$ is a dense morphism of sites, and $a^*l_K$ is a morphism of sites, thus $\widetilde{a^*l_K} \circ \eta_{\mathcal{G}(\mathbb D)}$ is also a morphism of sites. Moreover, using again $\pi_{\Sh({\cal C},J)} \circ \widetilde{a^*l_K} \circ \eta_{\mathcal{G}(\mathbb D)} = a^*l_K$, we obtain $\Sh(\pi_{\Sh({\cal C},J)})^* \circ \Sh(\widetilde{a^*l_K})^* \circ \Sh(\eta_{\mathcal{G}(\mathbb D)})^* = \Sh(a^*l_K)^*$, that is: $\widetilde{a^*l_K} \circ \eta_{\mathcal{G}(\mathbb D)} \simeq a^*l_K$, since $\pi_{\Sh({\cal C},J)}$ and $\eta_{\mathcal{G}(\mathbb D)}$ induce equivalences at the topos level. Hence, $\widetilde{a^*l_K} \circ \eta_{\mathcal{G}(\mathbb D)}$ is a flat functor relative to $({\cal C},J)$, and we also have the first part of the equivalence. 

To prove the other part of the equivalence, given a flat functor $A: \mathcal{G}(\mathbb D) \to (1_{\cal F} \downarrow f^*l_J)$ relative to $({\cal C},J)$, by Proposition \ref{etaextensionproperties} (iv), the following square commutes:

\[\begin{tikzcd}
	{(1_{\Sh(\mathcal{G}(\mathbb D),J_{\mathbb D})} \downarrow C_{p_{\mathbb D}}^*l_J)} & {(1_{\cal F} \downarrow f^*l_J)} \\
	{\mathcal{G}(\mathbb D)} & {(1_{\cal F} \downarrow f^*l_J)}
	\arrow["{\eta_{\mathcal{G}(\mathbb D)}}", from=2-1, to=1-1]
	\arrow["\simeq"', from=2-2, to=1-2]
	\arrow["{\widetilde{A}}", from=1-1, to=1-2]
	\arrow["A"', from=2-1, to=2-2]
\end{tikzcd}\]
Now, since $A$ and $\pi_{\cal F} \circ A$ induce the same geometric morphism ($\pi_{\cal F}$ being a dense morphism of sites), we have that $\widetilde{A} \simeq \widetilde{\pi_{\cal F} \circ A}$. Thus, we can simplify:  $\widetilde{\Sh(\pi_{\cal F}A)^*l_K} \circ \eta_{\mathcal{G}(\mathbb D)} = \widetilde{\pi_{\cal F}A} \circ \eta_{\mathcal{G}(\mathbb D)}$, and finally $\widetilde{\Sh(\pi_{\cal F}A)^*l_K} \circ \eta_{\mathcal{G}(\mathbb D)} = \widetilde{A} \circ \eta_{\mathcal{G}(\mathbb D)} = A$.

The adaptation of the proof to any topology containing the Giraud's one is immediate.

\end{proof}

\subsection{Approach via denseness conditions}

This approach stems from the construction of the canonical site of a \emph{relative geometric morphism}, that is, of a geometric morphism over a given base topos. This construction is a natural variant of the canonical relative site of a geometric morphism which takes into account the fact that the two domain and codomain morphisms are defined over the same base topos. 

\subsubsection{The canonical site of a relative geometric morphism}

Given a commutative diagram of geometric morphisms
\[\begin{tikzcd}
	{{\cal F}} && {{\cal F}'} \\
	\\
	& {{}\cal E}
	\arrow["{f'}", from=1-3, to=3-2]
	\arrow["f"', from=1-1, to=3-2]
	\arrow["g", from=1-1, to=1-3]
\end{tikzcd}\]
we regard $g$ as a relative geometric morphism $[f]\to [f']$ over $\cal E$. 

The inverse image $g^{\ast}$ of $g$ yields a morphism of relative sites:
\[
{g^{\ast}}_{\textup{fib}}:((1_{{\cal F}'}\downarrow f'^{\ast}), J_{f'}) \to ((1_{\cal F}\downarrow f^{\ast}), J_f)
\]

Let us denote by $\pi_{\cal E}$ (resp. $\pi'_{\cal E}$) the canonical projection functor from the comma category $(1_{{\cal F}}\downarrow f^{\ast}) \to {\cal E}$ (resp. $(1_{{\cal F}'}\downarrow f'^{\ast}) \to {\cal E}$). By the abstract duality of \cite{denseness} section 3, we can turn the morphism of sites $g^*_{fib}$ into a comorphism of sites inducing the same geometric morphism. More precisely, we have the (non commutative) diagram:

\[\begin{tikzcd}
	& {((1_{\cal F'} \downarrow f'^*) \downarrow g^*_{fib})} \\
	{(1_{\cal F'} \downarrow f'^*)} && {(1_{\cal F} \downarrow f^*)} \\
	& {\cal E}
	\arrow["\pi"', from=1-2, to=2-1]
	\arrow["{\pi_{\cal E}}"', from=2-1, to=3-2]
	\arrow["{\pi_g}", from=1-2, to=2-3]
	\arrow["{\pi_{\cal E}'}", from=2-3, to=3-2]
\end{tikzcd}\]

where $\pi_{\cal E}$ is the canonical comorphism of sites associated to the canonical relative site of the geometric morphism $f$, same for $\pi_{\cal E}'$ with $f'$, $\pi$ a dense comorphism of sites (inducing an equivalence), and $\pi_g$ the comorphism of sites inducing $g$. The situation may seem more complex, but the point is that we are now reduced to only study one type of morphisms: the comorphisms. This square gives us back the wished triangle at the topos level, but it doesn't commute at the site level. To study whether or not a triangle is commutative at the topos level, we want to see if we can reduce this square at the site level to a commutative one. The commutativity at the topos level will follow from the functoriality of the construction of geometric morphisms from comorphisms of sites. This restriction to a commutative square is given by:

\begin{defn}
The \emph{diagonal category} of $g:[f]\to [f']$ is the full subcategory $\Delta_{\cal E}(g)$ of $(1_{(1_{\cal F}\downarrow f^{\ast})}\downarrow g^{\ast}_{fib})$ on the objects $(\chi, \xi, \chi\to {g^{\ast}}_{\textup{fib}}(\xi))$ such that $\pi_{\cal E}(\chi)=\pi'_{\cal E}(\xi)$ and the $\cal E$-component of the arrow $\chi\to g^{\ast}(\xi)$ is the identity.  

More concretely, the category $\Delta_{\cal E}(g)$ can be described as a category having as objects the quintuples 
\[
(F, E, F', \delta:F\to f'^{\ast}(E), u:F\to g^{\ast}(F'))
\]
and as arrows from
\[
(F_1, E_1, F'_1, \delta_1:F_1\to f'^{\ast}(E_1), u_1:F_1\to g^{\ast}(F'_1))
\]
 to 
\[ 
(F_2, E_2, F'_2, \delta_2:F_2\to f'^{\ast}(E_2), u_2:F_2\to g^{\ast}(F'_2))
\]
the triplets of arrows 
\[
(\alpha:F_1 \to F_2, \beta:E_1\to E_2, \gamma:F'_1 \to F'_2)
\] 
satisfying the obvious commutativity conditions.

\end{defn}

\begin{remark}
Note that this category is indexed by $\cal E$: an object over $E$ is the data of two arrows $ F \to f^*E $ and $F' \to f'^*E$ together with an arrow between the first and the image of the second by $g^*_{fib}$. This fibration acts as the usual fibrations of comma categories, but in double: the pullback of the two arrows is well-behaved in the light of the  preservation  of finite limits by $g^*$, and the arrow induced between these two pullbacks is obtained by the usual functoriality of the pullback. As $(1_{\cal F} \downarrow f^*)$ and $(1_{\cal F'} \downarrow f'^*)$ are stacks, and $g^*$ preserves covering families and finite limits, this fibration is in fact also a stack. Indeed if we have compatible local data of two such arrows together with another between them, we can glue the two arrows into two global arrows because of the stack properties of $(1_{\cal F} \downarrow f^*)$ and $(1_{\cal F'} \downarrow f'^*)$. The global arrow between these two global arrows is then given by the prestack property of $(1_{\cal F} \downarrow f^*)$ (sheaf property of the hom-functors): the local data of arrows between the localizations of these two global arrows is equivalent to the global data of an arrow between these two global arrows. Also, the fact that hom-functors of this fibration are sheaves is inherited of the fact that those of $(1_{\cal F} \downarrow f^*)$, $(1_{\cal F'} \downarrow f'^*)$ have this property (being stacks, and in particular prestacks) and the finite-limite cover preservation of $g^*$.
\end{remark}

Recall that the appropriate topology on the category $(1_{(1_{\cal F}\downarrow f^{\ast})}\downarrow g^{\ast})$ is the one induced by the canonical projection to $(1_{\cal F}\downarrow f^{\ast})$, equivalently, by the canonical projection to $\cal F$. We denote it, abusing notation (since this notation has already been used above), by $J_f$.

The interest of this construction is that, unlike the classial relative site of $g$, this enriched version of it admits canonical projection functors to the base topos $\cal E$, as well as to each of the canonical relative sites of the geometric morphisms $f$ and $f'$, as displayed in the following diagram:

\[\begin{tikzcd}
	&& \Delta_{\cal E}(g) && {(1_{(1_{\cal F}\downarrow f^{\ast})}\downarrow g^{\ast})} \\
	{(1_{{\cal F}'}\downarrow f'^{\ast})} && {(1_{\cal F}\downarrow f^{\ast})} \\
	& {{\cal E}}
	\arrow["{g^{\ast}}", from=2-1, to=2-3]
	\arrow[hook, from=1-3, to=1-5]
	\arrow[from=1-5, to=2-3]
	\arrow["{\pi'_{\cal E}}"', from=2-1, to=3-2]
	\arrow["{\pi_{\cal E}}", from=2-3, to=3-2]
	\arrow["{\pi^{f}_{g}}", from=1-3, to=2-3]
	\arrow["{\pi^{f'}_{g}}"', from=1-3, to=2-1]
\end{tikzcd}\]

We now have:
\[
\pi_{\cal E}\circ \pi_{g}^{f}=\pi'_{\cal E}\circ \pi_{g}^{f'},
\]
while we do not have such a commutativity condition for the whole category $((1_{\cal F}\downarrow f^{\ast})\downarrow g^{\ast})$. The following proposition explains that if the triangle at the topos level commutes, then the diagonal subcategory is dense, \textit{i.e.}, in some way, our restricted square carries all the wished information of the non-commutative one:

\begin{prop}\label{prop:relativesiteofrelativemorphism}
	Let $g:[f]\to [f']$ be a relative geometric morphism. Then the diagonal category	
	$\Delta_{\cal E}(g)$ of $g$ is a $J_f$-dense subcategory of $(1_{(1_{\cal F}\downarrow f^{\ast})}\downarrow g^{\ast})$, whence $\pi^{f'}_{g}$ yields a comorphism of sites $(\Delta_{\cal E}(g), J_f|_{\Delta_{\cal E}(g)}) \to (1_{{\cal F}'}\downarrow f'^{\ast}, J_{f'})$ over $\cal E$ such that $g\cong C_{\pi^{f'}_{g}}$:
\end{prop}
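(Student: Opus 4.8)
The plan is to prove directly that $\Delta_{\cal E}(g)$ is a $J_f$-dense subcategory of $(1_{(1_{\cal F}\downarrow f^{\ast})}\downarrow g^{\ast}_{\textup{fib}})$, and then to read off the assertion about $\pi^{f'}_{g}$ from Theorem 3.16 of \cite{denseness} combined with the Comparison Lemma. Recall that applying Theorem 3.16 of \cite{denseness} to the morphism of sites $g^{\ast}_{\textup{fib}}\colon((1_{{\cal F}'}\downarrow f'^{\ast}),J_{f'})\to((1_{\cal F}\downarrow f^{\ast}),J_f)$ exhibits the projection $P\colon(1_{(1_{\cal F}\downarrow f^{\ast})}\downarrow g^{\ast}_{\textup{fib}})\to(1_{{\cal F}'}\downarrow f'^{\ast})$ (of which $\pi^{f'}_{g}$ is the restriction to $\Delta_{\cal E}(g)$) as a comorphism of sites inducing $g$, once the domain carries the topology $J_f$; and that, for this topology, a sieve is $J_f$-covering exactly when its image under the canonical projection to ${\cal F}$ is epimorphic. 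Throughout I use the coherence isomorphism $g^{\ast}f'^{\ast}\cong f^{\ast}$ supplied by the commutativity $f'\circ g\cong f$ of the triangle; this is the only place where the relative geometric morphism hypothesis enters.

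For the density, I would take an arbitrary object $X=(\chi\colon F\to f^{\ast}E,\; \xi\colon F'\to f'^{\ast}E',\; u)$ of $(1_{(1_{\cal F}\downarrow f^{\ast})}\downarrow g^{\ast}_{\textup{fib}})$, where $u$ amounts to a pair $(\alpha\colon F\to g^{\ast}F',\; \beta\colon E\to E')$ satisfying $g^{\ast}(\xi)\circ\alpha=f^{\ast}(\beta)\circ\chi$ (modulo the coherence isomorphism), and correct the discrepancy recorded by $\beta$ by a pullback. Concretely, form in ${\cal F}'$ the pullback $\xi_\beta\colon F'_\beta:=F'\times_{f'^{\ast}E'}f'^{\ast}E\to f'^{\ast}E$ of $\xi$ along $f'^{\ast}(\beta)$, an object of $(1_{{\cal F}'}\downarrow f'^{\ast})$ now lying over $E$, together with the cartesian morphism $(\beta,\mathrm{pr})\colon\xi_\beta\to\xi$. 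Since $g^{\ast}$ preserves finite limits and the coherence isomorphism is natural, $g^{\ast}F'_\beta\cong g^{\ast}F'\times_{f^{\ast}E'}f^{\ast}E$, so $\alpha$ and $\chi$ jointly induce an arrow $\alpha_\beta\colon F\to g^{\ast}F'_\beta$ with $g^{\ast}(\xi_\beta)\circ\alpha_\beta=\chi$. The resulting datum $X_\beta:=(F,E,F'_\beta,\xi_\beta,\alpha_\beta)$ is then an object of $\Delta_{\cal E}(g)$: its two ${\cal E}$-components are both $E$, and the ${\cal E}$-component of its structural arrow is the identity. Moreover $(\mathrm{id}_F,\mathrm{id}_E)$ together with $(\beta,\mathrm{pr})\colon\xi_\beta\to\xi$ assemble into a morphism $X_\beta\to X$ in $(1_{(1_{\cal F}\downarrow f^{\ast})}\downarrow g^{\ast}_{\textup{fib}})$ whose underlying ${\cal F}$-arrow is $\mathrm{id}_F$. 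Hence the sieve it generates projects onto the maximal sieve on $F$, which is covering for the canonical topology of ${\cal F}$; therefore this sieve is $J_f$-covering, and $\Delta_{\cal E}(g)$ is $J_f$-dense.

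To conclude, I would use that the full inclusion $\Delta_{\cal E}(g)\hookrightarrow(1_{(1_{\cal F}\downarrow f^{\ast})}\downarrow g^{\ast}_{\textup{fib}})$ of a $J_f$-dense full subcategory is itself a dense comorphism of sites, hence induces an equivalence $\Sh(\Delta_{\cal E}(g),J_f|_{\Delta_{\cal E}(g)})\simeq\Sh((1_{(1_{\cal F}\downarrow f^{\ast})}\downarrow g^{\ast}_{\textup{fib}}),J_f)$. Composing it with the comorphism $P$ above shows that $\pi^{f'}_{g}$ is a comorphism of sites $(\Delta_{\cal E}(g),J_f|_{\Delta_{\cal E}(g)})\to((1_{{\cal F}'}\downarrow f'^{\ast}),J_{f'})$, and, by functoriality of $p\mapsto C_p$ along composites together with the fact that the inclusion induces an equivalence, that $C_{\pi^{f'}_{g}}\cong C_{P}\cong g$. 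That $\pi^{f'}_{g}$ is a morphism \emph{over} ${\cal E}$ is immediate from the identity $\pi_{\cal E}\circ\pi^{f}_{g}=\pi'_{\cal E}\circ\pi^{f'}_{g}$ recorded just before the statement, since both composites send an object of $\Delta_{\cal E}(g)$ to its common ${\cal E}$-component.

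The step I expect to be the main obstacle is the coherence bookkeeping around the isomorphism $g^{\ast}f'^{\ast}\cong f^{\ast}$: one must check that it is compatible with all the structure maps entering the double comma category, so that $\alpha_\beta$ is genuinely well defined, that $X_\beta$ really lies in $\Delta_{\cal E}(g)$, and that $X_\beta\to X$ really is a morphism there; everything else (forming the pullback, invoking finite-limit preservation by $g^{\ast}$, and the passage to the dense subsite) is routine. A secondary point to pin down precisely is the reference for the statement that the inclusion of a dense full subsite is a dense comorphism of sites inducing the same geometric morphism over a fixed base, which follows from the Comparison Lemma together with the functoriality of the comorphism-to-geometric-morphism construction.
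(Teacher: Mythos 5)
Your proof is correct and follows the same overall strategy as the paper: exhibit, for each object of $(1_{(1_{\cal F}\downarrow f^{\ast})}\downarrow g^{\ast}_{\textup{fib}})$, a single covering morphism from an object of $\Delta_{\cal E}(g)$ whose ${\cal F}$-component is an identity, so that the generated sieve is $J_f$-covering. The only difference is in how the diagonal object is built: the paper passes to the product $E\times E'$ and replaces $F'$ by $f'^{\ast}(E)\times F'$, whereas you keep the base object $E$ and replace $F'$ by the pullback $F'\times_{f'^{\ast}E'}f'^{\ast}E$ along $f'^{\ast}(\beta)$ --- i.e.\ you use the cartesian lift of $\beta$ in the fibration $(1_{{\cal F}'}\downarrow f'^{\ast})\to{\cal E}$, which is arguably the more intrinsically fibrational move; both constructions rely on the same ingredients (finite-limit preservation by $g^{\ast}$ and the coherence isomorphism $g^{\ast}f'^{\ast}\cong f^{\ast}$), and your coherence bookkeeping for $\alpha_\beta$ and for the morphism $X_\beta\to X$ checks out. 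The deduction of the second assertion from denseness via the Comparison Lemma and the functoriality of $p\mapsto C_p$ is exactly what the paper leaves implicit, so no gap there.
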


\begin{proof}
We have to show that every object of the category $(1_{(1_{\cal F}\downarrow f^{\ast})}\downarrow g^{\ast})$ can be $J_f$-covered by objects lying in $\Delta_{\cal E}(g)$. 

An object of $(1_{(1_{\cal F}\downarrow f^{\ast})}\downarrow g^{\ast})$ is a triplet consisting in an object $(F, E, \alpha:F\to f^{\ast}(E))$ of the category $(1_{\cal F}\downarrow f^{\ast})$, an object $(F', E', \alpha':F'\to f'^{\ast}(E'))$ of the category $(1_{{\cal F}'}\downarrow f'^{\ast})$ and an arrow 
\[
(u, e):(F, E, \alpha:F\to f^{\ast}(E)) \to {g^{\ast}}_{\textup{fib}}(F', E', \alpha':F'\to f'^{\ast}(E'))
\]
in the category $(1_{\cal F}\downarrow f^{\ast})$.

Given such a triplet, consider the triplet consisting of the object $(F, E\times E', <\alpha, f^{\ast}(e)\circ \alpha'>:F \to f^{\ast}(E\times E')\cong f^{\ast}(E)\times f^{\ast}(E'))$ of $(1_{\cal F}\downarrow f^{\ast})$, the object $$(f'^{\ast}(E)\times F', E\times E', 1_{f'^{\ast}(E)}\times \alpha':f'^{\ast}(E)\times F' \to f^{\ast}(E)\times f^{\ast}(E')\cong f^{\ast}(E\times E'))$$ of the category $(1_{{\cal F}'}\downarrow f'^{\ast})$ and the arrow from the former object to the image under ${g^{\ast}}_{\textup{fib}}$ of the latter given by the pair $(<\alpha, u>, 1_{E\times E'})$ (modulo the isomorphism $f^{\ast}(E)\cong g^{\ast}(f'^{\ast}(E))$).

Now, the second triplet admits an arrow in the category $(1_{(1_{\cal F}\downarrow f^{\ast})}\downarrow g^{\ast})$ to the first triplet whose $\cal F$-component of the first component is the identity on $F$; so this arrow is $J_f$-covering by definition of the topology $J_f$. Since the objects $(F, E\times E', <\alpha, f^{\ast}(e)\circ \alpha'>:F \to f^{\ast}(E\times E')\cong f^{\ast}(E)\times f^{\ast}(E'))$ and in the second triplet  $$(f'^{\ast}(E)\times F', E\times E', 1_{f'^{\ast}(E)}\times \alpha':f'^{\ast}(E)\times F' \to f^{\ast}(E)\times f^{\ast}(E')\cong f^{\ast}(E\times E'))$$  have the same $\cal E$-component, namely $E\times E'$, the second triplet, as an object of the category $(1_{(1_{\cal F}\downarrow f^{\ast})}\downarrow g^{\ast})$, actually lies in $\Delta_{\cal E}(g)$. So the denseness condition is satisfied, as required.   
\end{proof}

The following result is relevant in connection of morphisms between localic geometric morphisms.

\begin{prop}\label{prop:preservationmono}
	Let $g:[f:{\cal F}\to {\cal E}]\to [f':{\cal F}' \to {\cal E}]$ be a relative geometric morphism. Then the functor
	\[
	{g^{\ast}}_{\textup{fib}}:((1_{{\cal F}'}\downarrow f'^{\ast}), J_{f'}) \to ((1_{\cal F}\downarrow f^{\ast}), J_f)
	\]
	restricts to a morphism of sites
	\[
	((1_{{\cal F}'}\downarrow^{\textup{Sub}} f'^{\ast}), J_{f'}|_{(1_{{\cal F}'}\downarrow^{\textup{Sub}} f'^{\ast})}), ((1_{\cal F}\downarrow^{\textup{Sub}} f^{\ast}) \to J_f |_{((1_{\cal F}\downarrow^{\textup{Sub}} f^{\ast})})
	\]
\end{prop}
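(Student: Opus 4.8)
The plan is to observe that on the reduced relative sites everything becomes much easier: these are \emph{finitely complete} sites, and for a functor whose domain is a finitely complete site, being a morphism of sites is equivalent to preserving finite limits and being cover-preserving. So the whole statement reduces to two exactness properties of $g^{\ast}$ — preservation of monomorphisms and of finite limits — both of which hold because $g^{\ast}$ is the inverse image of a geometric morphism.

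First I would verify that $g^{\ast}_{\textup{fib}}$ genuinely restricts to the subcategories of monomorphic objects. An object of $(1_{{\cal F}'}\downarrow^{\textup{Sub}}f'^{\ast})$ is a monomorphism $m':F'\mono f'^{\ast}(E)$; since $g^{\ast}$ is left exact it preserves monomorphisms, so $g^{\ast}(m')$ is a monomorphism, and composing with the canonical isomorphism $g^{\ast}f'^{\ast}(E)\cong f^{\ast}(E)$ (afforded by $f\cong f'\circ g$) still gives a monomorphism into an object of the form $f^{\ast}(E)$. Hence $g^{\ast}_{\textup{fib}}$ induces a functor $(1_{{\cal F}'}\downarrow^{\textup{Sub}}f'^{\ast})\to (1_{\cal F}\downarrow^{\textup{Sub}}f^{\ast})$, and similarly on morphisms.

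Next I would record that $(1_{\cal F}\downarrow^{\textup{Sub}}f^{\ast})$ is finitely complete, with finite limits computed as in the ambient category $(1_{\cal F}\downarrow f^{\ast})$. The latter is finitely complete with limits computed componentwise, since both $1_{\cal F}$ and $f^{\ast}$ preserve finite limits (cf.\ Remark~\ref{remarkseta}(e)); and the full subcategory on the monomorphic objects is closed under these limits, because a finite product, or an equalizer, of monomorphisms $F\mono f^{\ast}(E)$ is again a monomorphism into some $f^{\ast}(E')$ (monomorphisms being stable under finite products and equalizers, and $f^{\ast}$ being left exact). The same observation shows that the restricted functor preserves finite limits: on objects $g^{\ast}_{\textup{fib}}$ acts as $g^{\ast}$ on the ${\cal F}$-component and as the identity on the ${\cal E}$-component, both left exact, so it preserves the componentwise finite limits of the ambient categories; and, as just seen, it carries monomorphic objects to monomorphic objects.

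It remains to treat cover-preservation and to conclude. Unwinding the definition of the induced topology, a covering family for $J_{f'}|_{(1_{{\cal F}'}\downarrow^{\textup{Sub}}f'^{\ast})}$ is precisely a $J_{f'}$-covering family whose domains are all monomorphic objects; applying $g^{\ast}_{\textup{fib}}$, which is cover-preserving (it is a morphism of relative sites) and preserves monomorphic objects, we obtain a $J_{f}$-covering family of monomorphic objects, that is, a covering family for $J_{f}|_{(1_{\cal F}\downarrow^{\textup{Sub}}f^{\ast})}$. Finally, a functor out of a finitely complete site that preserves finite limits and is cover-preserving is automatically a morphism of sites — being left exact with finitely complete domain it is flat, hence cover-flat, and a cover-flat cover-preserving functor is a morphism of sites. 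This gives the claim. The only step calling for an actual (small) verification is the closure of the class of monomorphic objects under finite limits in the comma category, together with its compatibility with $g^{\ast}_{\textup{fib}}$; there is no genuine obstacle here, the substance of the argument having been absorbed into the exactness of $g^{\ast}$ and the cartesianness of the reduced relative sites.
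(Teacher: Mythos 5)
Your proposal is correct, and its essential kernel coincides with the paper's own (very short) proof: the paper simply observes that an object of the reduced site, viewed as a monic arrow towards the identity object $(f'^{\ast}(E), E, 1)$ in $(1_{{\cal F}'}\downarrow f'^{\ast})$, is sent by ${g^{\ast}}_{\textup{fib}}$ to a monic arrow, i.e.\ to an object of $(1_{\cal F}\downarrow^{\textup{Sub}} f^{\ast})$ --- which is exactly your first step, phrased via the left exactness of $g^{\ast}$ and the isomorphism $g^{\ast}f'^{\ast}\cong f^{\ast}$. Where you diverge is that the paper stops there (``it suffices to observe\dots''), implicitly taking for granted that the restriction of the morphism of sites ${g^{\ast}}_{\textup{fib}}$ to these full subcategories, equipped with the induced topologies, is again a morphism of sites; you instead make this explicit by noting that the reduced relative sites are cartesian (the monomorphic objects being closed under the componentwise finite limits of the comma categories, the one small verification being the equalizer case, which indeed goes through since the composite $\mathrm{Eq}(u_1,u_2)\to f^{\ast}(\mathrm{Eq}(e_1,e_2))\to f^{\ast}(E_1)$ is monic), that the restricted functor is left exact and cover-preserving for the induced topologies, and then invoking the standard criterion that a lex cover-preserving functor on a cartesian site is a morphism of sites. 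This buys a fully self-contained verification of the morphism-of-sites claim rather than only of the well-definedness of the restriction, at the cost of a slightly longer argument; both routes ultimately rest on the same exactness of $g^{\ast}$.
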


\begin{proof}
	It suffices to observe that an object $(F', E', \alpha:F'\to f'^{\ast}(E'))$ can be regarded as an arrow $(\alpha, 1):(f'^{\ast}(E'), E', 1_{f'^{\ast}(E')})$ in the category $(1_{{\cal F}'}\downarrow f'^{\ast})$ (and similarly for the relative category of the morphism $f$). Now, if $\alpha$ is monic then the corresponding arrow $(\alpha, 1)$ if also monic, and hence is sent by the functor ${g^{\ast}}_{\textup{fib}}$ to a monic arrow towards the identity, which in turn identifies with an object of the subcategory $(1_{\cal F} \downarrow^{\textup{Sub}} f^{\ast})$, as required. 
\end{proof}

\subsubsection{The diagonal site of a relative morphism of sites}

We have seen in Proposition \ref{prop:relativesiteofrelativemorphism} that if a morphism of sites
$$m:((1_{{\cal F}'}\downarrow f'^{\ast}), J_{f'}) \to ((1_{\cal F}\downarrow f^{\ast}), J_f)$$
is of the form ${g^{\ast}}_{\textup{fib}}$ for a relative geometric morphism $g:[f]\to [f']$ over $\cal E$ then the diagonal subcategory of $(1_{(1_{\cal F}\downarrow f^{\ast})} \downarrow m)$, is dense in it. Conversely, if $m$ is a morphism of sites with this property then it necessarily induces a geometric morphism defined over $\cal E$, as, denoting by $\pi_{1}$ and $\pi_{2}$ are the two projections from the diagonal category of $m$ respectively to $(1_{\cal F'} \downarrow f'^{\ast})$ and $(1_{\cal F}\downarrow f^{\ast})$, $\pi_{2}$ thus induces an equivalence of toposes under which $\Sh(m)$ corresponds to $C_{\pi_{1}}$, and the following commutativity condition for comorphisms of sites holds: $\pi'_{\cal E}\circ \pi_{1}=\pi_{\cal E}\circ \pi_{2}$.
 
Recall that in \ref{morphsitesinducing} we had defined for every fiber $c$ a functor $A_c$. The index $c$ was fixed, but we can bring all of them together and define a functor \[
A_{\cal C}: (p'\downarrow 1_{\cal C}) \to (p\downarrow 1_{\cal C})
\]
sending an object $(d, c, u:p(d)\to c)$ of $(p\downarrow 1_{\cal C})$ to the object $(A(d), c, u\phi_d: p'A(d))\to c)$ of $(p'\downarrow 1_{\cal C})$. This functor is defined in the spirit of $g^*_{fib}$, but at the level of sites.

Indeed, with the same arguments as for the $A_c$, this functor makes the following diagram commute:

\[\begin{tikzcd}
	{(p'\downarrow 1_{\cal C})} && {(p\downarrow 1_{\cal C})} \\
	{((1_{\textup{\bf Sh}({\cal D'}, K)} \downarrow C_{p'}^{\ast}), J_{C_{p'}})} && {((1_{\textup{\bf Sh}({\cal D}, K)} \downarrow C_{p}^{\ast}), J_{C_{p}})}
	\arrow["{\xi_{p'}}"', from=1-1, to=2-1]
	\arrow["{\xi_{p}}", from=1-3, to=2-3]
	\arrow["{A_{\cal C}}", from=1-1, to=1-3]
	\arrow["{\textbf{Sh}(A)^{\ast}}", from=2-1, to=2-3]
\end{tikzcd}\]
 All this discussion provides the inspiration for the following general criterion for a morphism between relative sites to induce a relative geometric morphism:    

\begin{thm}\label{thm:charaterizationrelativemorphismdenseness}
	Let $({\cal C}, J)$ be a small-generated site, $p:({\cal D}, K) \to ({\cal C}, J)$ and $p':({\cal D}', K') \to ({\cal C}, J)$ comorphisms of sites and $A:({\cal D}, K) \to ({\cal D'}, K')$ a morphism of sites together with a natural transformation $\phi: p'\circ A \Rightarrow p$. Then the following conditions are equivalent:
	\begin{enumerate}[(i)]
		\item The pair $(A,\phi)$ is a morphism of sites over $\cal C$, that is, $\Sh(A)$ is a relative geometric morphism $[C_{p'}]\to [C_{p}]$ over $\Sh({\cal C}, J)$;
		
		\item The diagonal subcategory $(1_{(p' \downarrow {\cal C})}\downarrow^{\ast} A_{\cal C})$ of $(1_{(p' \downarrow {\cal C})}\downarrow A_{\cal C})$ constituted of the triplets $(d',d,c,c', pd \xrightarrow{u} c, p'd \xrightarrow{u'} c', d' \xrightarrow{g} Ad, c' \xrightarrow{f} c) $ such that $c'=c$ is $\widetilde{K_{A_{\cal C}}}$-dense in it, with $\widetilde{K_{A_{\cal C}}}$ having for covering sieves those sent by the two consecutive first projections to covering ones in $\cal D'$:
		\[\begin{tikzcd}
			&& {(1_{(p' \downarrow {\cal C})}\downarrow^{\ast} A_{\cal C})} & {(1_{(p' \downarrow {\cal C})}\downarrow A_{\cal C})} \\
			{(p \downarrow {\cal C})} && {(p' \downarrow {\cal C})} \\
			& {{\cal C}}
			\arrow["{A_{\cal C}}", from=2-1, to=2-3]
			\arrow[hook, from=1-3, to=1-4]
			\arrow[from=1-4, to=2-3]
			\arrow["{\pi^1_{\cal C}}"', from=2-1, to=3-2]
			\arrow["{\pi^2_{\cal C}}", from=2-3, to=3-2]
			\arrow["{\pi_{2}}", from=1-3, to=2-3]
			\arrow["{\pi_{1}}"', from=1-3, to=2-1]
		\end{tikzcd}\]

	\end{enumerate}
\end{thm}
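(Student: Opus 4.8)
The plan is to assemble this equivalence from ingredients already at hand: Proposition \ref{prop:relativesiteofrelativemorphism}, together with the converse observation made at the beginning of the present subsubsection, transported from the canonical relative sites down to the sites $(p\downarrow {\cal C})$ and $(p'\downarrow {\cal C})$ along the dense bimorphisms of sites $\xi_{p}$, $\xi_{p'}$ of Proposition \ref{propdualcomma}. Throughout, write $g:=\Sh(A)$, viewed as a candidate relative geometric morphism over $\Etopos:=\Sh({\cal C},J)$. Recall from the discussion preceding Proposition \ref{etaextensionproperties} that $A$ and $\phi$ produce the $\eta$-extension
\[
\tilde{A}:(1_{\Sh({\cal D},K)}\downarrow C_{p}^{\ast})\longrightarrow(1_{\Sh({\cal D}',K')}\downarrow C_{p'}^{\ast}),
\]
which is a morphism of sites by Proposition \ref{etaextensionproperties}(ii) whether or not $\tilde{\phi}$ is invertible, and which agrees with $g^{\ast}_{\textup{fib}}$ up to canonical isomorphism precisely when $g$ is a relative geometric morphism over $\Etopos$. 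By construction of $A_{\cal C}$, the square with horizontal edges $A_{\cal C}$ and $\tilde{A}$ and vertical edges $\xi_{p}$, $\xi_{p'}$ commutes up to canonical isomorphism.

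First I would build the comparison functor. By functoriality of the comma construction, this square induces a functor
\[
\widehat{\xi}:(1_{(p'\downarrow {\cal C})}\downarrow A_{\cal C})\longrightarrow(1_{(1_{\Sh({\cal D}',K')}\downarrow C_{p'}^{\ast})}\downarrow \tilde{A})
\]
which applies $\xi_{p'}$, $\xi_{p}$ and $\xi_{p'}$ to the three pieces of data of an object. Since $\xi_{p}$ and $\xi_{p'}$ are compatible with the projections to $\cal C$, the functor $\widehat{\xi}$ preserves the $\Etopos$-component of objects and carries identity base-components to identity base-components; hence it restricts to a functor $(1_{(p'\downarrow {\cal C})}\downarrow^{\ast} A_{\cal C})\to\Delta$, where $\Delta$ is the diagonal subcategory of $(1_{(1_{\Sh({\cal D}',K')}\downarrow C_{p'}^{\ast})}\downarrow \tilde{A})$ (which is $\Delta_{\Etopos}(g)$ whenever $g$ is over $\Etopos$). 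The one genuinely new point is the lemma that $\widehat{\xi}$, and likewise its restriction to the two diagonal subcategories, is a dense bimorphism of sites — where $(1_{(p'\downarrow {\cal C})}\downarrow A_{\cal C})$ carries $\widetilde{K_{A_{\cal C}}}$ and the codomain carries the topology induced by its canonical projection. This should follow from Proposition \ref{propdualcomma} (denseness of $\xi_{p}$, $\xi_{p'}$) and the fact that in all the comma categories at play the covers are exactly those detected by the evident projections, so that the denseness criterion can be checked one component at a time.

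Granting this lemma, the two implications are short. For (i) $\Rightarrow$ (ii): if $(A,\phi)$ is a morphism of sites over $({\cal C},J)$ then $g$ is a relative geometric morphism over $\Etopos$, $\tilde{A}\cong g^{\ast}_{\textup{fib}}$, and Proposition \ref{prop:relativesiteofrelativemorphism} gives that $\Delta=\Delta_{\Etopos}(g)$ is dense in $(1_{(1_{\Sh({\cal D}',K')}\downarrow C_{p'}^{\ast})}\downarrow \tilde{A})$; as $\widehat{\xi}$ and its restriction to the diagonals are dense bimorphisms fitting, together with the two (fully faithful) inclusions, into a commutative square of morphisms of sites, the inclusion $(1_{(p'\downarrow {\cal C})}\downarrow^{\ast} A_{\cal C})\hookrightarrow(1_{(p'\downarrow {\cal C})}\downarrow A_{\cal C})$ induces an equivalence of sheaf toposes, i.e.\ $(1_{(p'\downarrow {\cal C})}\downarrow^{\ast} A_{\cal C})$ is $\widetilde{K_{A_{\cal C}}}$-dense. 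For (ii) $\Rightarrow$ (i): conversely, $\widetilde{K_{A_{\cal C}}}$-denseness of $(1_{(p'\downarrow {\cal C})}\downarrow^{\ast} A_{\cal C})$ pushes forward, along the same commutative square, to denseness of $\Delta$; since $\tilde{A}$ is a morphism of sites, the converse observation recalled at the start of the subsubsection applies and shows that $\Sh(\tilde{A})\cong\Sh(A)$ is a geometric morphism over $\Etopos$, i.e.\ $C_{p}\circ\Sh(A)\simeq C_{p'}$, which is condition (i).

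The main obstacle is the density lemma of the second paragraph: verifying that $\widehat{\xi}$ and its restriction to the diagonal subcategories are dense bimorphisms of sites, and matching $\widetilde{K_{A_{\cal C}}}$ with the topology that $\widehat{\xi}$ reflects from the codomain. The remaining ingredients — the commutativity of the comparison square (already essentially recorded before the statement), the functoriality of the comma construction, the compatibility of $\xi_{p}$ and $\xi_{p'}$ with the projections to $\cal C$, and the elementary fact that denseness of a full subcategory is both preserved and reflected along a dense bimorphism of sites (via the Comparison Lemma and a two-out-of-three argument for the induced equivalences) — are routine given the results of \cite{denseness} and the earlier part of this section.
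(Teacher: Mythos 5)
Your plan — transporting the topos-level statement (Proposition \ref{prop:relativesiteofrelativemorphism} and its converse) down to the sites $(p\downarrow {\cal C})$, $(p'\downarrow {\cal C})$ along $\xi_p$, $\xi_{p'}$ — is a genuinely different route from the paper's, but as written it has a real gap, and it sits exactly where you flag it. The ``density lemma'' for $\widehat{\xi}$ (and for its restriction to the diagonal subcategories) is not a routine componentwise check: to cover an object of the topos-level comma category by objects in the image of $\widehat{\xi}$ you must simultaneously resolve both components \emph{and} the connecting arrow compatibly, and this is precisely the nontrivial content of such denseness statements in this paper (compare the product trick $E\times E'$ needed in the proof of Proposition \ref{prop:relativesiteofrelativemorphism}, and the pullback-plus-colimit-decomposition argument in Proposition \ref{prop:relativesitesmallgenerated}). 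Proposition \ref{propdualcomma} gives denseness of $\xi_p$, $\xi_{p'}$ separately, but it does not by itself produce compatible lifts of a sheaf-level arrow $G'\to \tilde{A}(G)$ to site-level data; so the central lemma of your argument is asserted, not proved. A second, more insidious, problem is the step ``the inclusion induces an equivalence of sheaf toposes, i.e.\ the diagonal is $\widetilde{K_{A_{\cal C}}}$-dense'' in your (i) $\Rightarrow$ (ii). Condition (ii) demands genuine denseness (every object admits a covering sieve generated by arrows from diagonal objects), which is stronger than the Morita-type statement your two-out-of-three argument delivers; passing from ``induces an equivalence'' back to ``dense'' for a full subcategory with the induced topology is not automatic for non-subcanonical topologies (sheaf-level arrows from diagonal representables are only \emph{locally} induced by arrows of the site, and the resulting covers need not lie in the sieve generated by the diagonal objects). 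So even granting your density lemma, the forward implication as you state it proves something weaker than (ii).

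For comparison, the paper avoids both issues by staying at the site level: for (i) $\Rightarrow$ (ii) it constructs the required covers of an arbitrary object of $(1_{(p'\downarrow {\cal C})}\downarrow A_{\cal C})$ explicitly, in two steps, from the relative local filteredness conditions (a) and (b) of Proposition \ref{propmorphimrelmorphism} (first re-indexing the two components over the same base object $c'$, then connecting $p'(Ad)$ with the $p'(Ad_i)$, and using transitivity of covers); and for (ii) $\Rightarrow$ (i) it observes directly that all four projections in the diagram of (ii) are comorphisms of sites, that $\pi_2$ induces an equivalence because the diagonal inclusion is dense, that $\pi_1$ induces $\Sh(A)$, and that the diagonal square commutes strictly at the site level, so the triangle commutes at the topos level by functoriality of $C_{(-)}$ — no detour through the topos-level diagonal is needed. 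If you want to salvage your approach, the work to be done is exactly the two points above: a genuine proof that $\widehat{\xi}$ and its diagonal restriction are dense (this will likely reuse the same filteredness/cofinality data as the paper's direct argument, so little is saved), and a replacement of the ``equivalence implies dense'' inference by an argument that pulls the topos-level covers of Proposition \ref{prop:relativesiteofrelativemorphism} back along $\widehat{\xi}$ using the precise local-lifting properties of dense morphisms of sites from \cite{denseness}.
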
 

\begin{proof}

 To show that (i) $\Rightarrow$ (ii) we can use the characterization of $A$ being a morphism of relative sites in term of the $A_c$ being filtering. Let us take any object of the non diagonal category, it can be summarized as:

\[\begin{tikzcd}
	{p'(d')} && {p'(Ad)} \\
	\\
	{c'} && c
	\arrow["{p'(g)}", from=1-1, to=1-3]
	\arrow["{u'}"', from=1-1, to=3-1]
	\arrow["u\phi_d", from=1-3, to=3-3]
	\arrow["f"', from=3-1, to=3-3]
\end{tikzcd}\]

In order to cover it with objects from the diagonal category, we may want to bring together the components of this object over the same index. By the first property of the relative filteredness, we can have locally over $c'$, for some covering family $(f_i)_{i\in I}$:

\[\begin{tikzcd}
	& {p'(d')} && {p'(Ad)} \\
	\\
	& {c'} && c \\
	&& {c'} \\
	{p'(d'_i)} &&&& {p'(Ad_i)}
	\arrow["{p'(g)}", from=1-2, to=1-4]
	\arrow["{u'}", from=1-2, to=3-2]
	\arrow["u\phi_d"', from=1-4, to=3-4]
	\arrow["f"', from=3-2, to=3-4]
	\arrow["1", from=4-3, to=3-2]
	\arrow["f"', from=4-3, to=3-4]
	\arrow["{u'p'(f_i)}", from=5-1, to=4-3]
	\arrow["{p'(g_i)}", from=5-1, to=5-5]
	\arrow["{p'(f_i)}", from=5-1, to=1-2]
	\arrow["{u_i\phi_{d_i}}", from=5-5, to=4-3]
\end{tikzcd}\]

We can now, by the second property of the relative filteredness, locally connect $p'(Ad)$ and the $p'(Ad_i)$:

\[\begin{tikzcd}
	&& {p'(d')} && {p'(Ad)} \\
	\\
	&& {c'} && c \\
	&&& {c'} \\
	& {p'(d'_i)} &&&& {p'(Ad_i)} \\
	{p'(d'_{ij })} &&&&&& {p'(Ad_{ij})}
	\arrow["{p'(g)}", from=1-3, to=1-5]
	\arrow["{u'}", from=1-3, to=3-3]
	\arrow["u\phi_d"', from=1-5, to=3-5]
	\arrow["f"', from=3-3, to=3-5]
	\arrow["1", from=4-4, to=3-3]
	\arrow["f"', from=4-4, to=3-5]
	\arrow["{u'p'(f_i)}", from=5-2, to=4-4]
	\arrow["{p'(g_i)}", from=5-2, to=5-6]
	\arrow["{p'(f_i)}", from=5-2, to=1-3]
	\arrow["{u_i\phi_{d_i}}", from=5-6, to=4-4]
	\arrow["{p'(f_{ij})}", from=6-1, to=5-2]
	\arrow["{p'A(\pi_{ij})}"{description}, from=6-7, to=5-6]
	\arrow["{p'(g_{ij})}", from=6-1, to=6-7]
	\arrow["{p'A(\pi'_{ij})}"', from=6-7, to=1-5]
\end{tikzcd}\]

The multicomposition of the $f_i$ and $f_{ij}$ is covering by transitivity, so the obvious arrows and objects depicted on the diagram are covering for the topology on the non diagonal category (the covering are exactly those which have their left component covering), and coming from the diagonal category.

To see that (ii) $\Rightarrow$ (i), we recall that, in the diagram of the point (ii) in the proposition, all the projections are comorphisms of sites (cf denseness conditions). Moreover, $\pi_1$ induces $\Sh (A)$, $\pi_2$ induces an equivalence since the upper inclusion does by hypothesis, as the arrow coming from the left hand comma category (cf 3.4.2 \cite{denseness}), and $\pi_{\cal C}^1$ and $\pi_{\cal C}^2$ induce the structure geometric morphisms over $\Sh ({\cal C}, J)$. Since we restricted to the the diagonal category, the square of projections commutes, and all these projections are comorphisms of sites, inducing the wishing triangle at the topos level, therefore the triangle at the topos level commutes.
\end{proof}

Here is an explicit description of this denseness condition:

\begin{prop}
	Under the assumption of the above proposition:
	\begin{enumerate}[(i)]
		\item The category $(1_{(p' \downarrow {\cal C})}\downarrow^{\ast} A_{\cal C})$ is isomorphic to the one having as objects the $(d',d,c, d \xrightarrow{v} Ad, pd \xrightarrow{u} c ) $ with $d'$ object of $\cal D'$, $d$ of $\cal D$, and $c$ of $\cal C$.
		
		\item The $\widetilde{K_{A_{\cal C}}}$-denseness condition for $(1_{(p \downarrow {\cal C})}\downarrow^{\ast} A_{\cal C})\hookrightarrow (1_{(p \downarrow {\cal C})}\downarrow A_{\cal C})$ can be explicitly reformulated as follows: 

for every object of the comma category $(1_{(p \downarrow {\cal C})}\downarrow A_{\cal C})$
\begin{center}
    
\[\begin{tikzcd}
	{p'(d')} && {p'(Ad)} \\
	\\
	{c'} && c
	\arrow["{p'(g)}", from=1-1, to=1-3]
	\arrow["{u'}"', from=1-1, to=3-1]
	\arrow["u\phi_d", from=1-3, to=3-3]
	\arrow["f"', from=3-1, to=3-3]
\end{tikzcd}\]
\end{center}

there exist objects $c_i$ of $\cal C$, objects $d_i$ of $\cal D$, objects $d'_i$ of $\cal D'$, with arrows $p'(d'_i) \xrightarrow{u'_i} c_i$ and $p(d_i) \xrightarrow{u_i} c_i $, and an arrow $d'_i \xrightarrow{g_i} A(d_i)$ giving diagonal objects of the comma category

\[\begin{tikzcd}
	{p'(d'_i)} && {p'(Ad_i)} \\
	& {c_i}
	\arrow["{u'_i}"', from=1-1, to=2-2]
	\arrow["{u_i\phi_{d_i}}", from=1-3, to=2-2]
	\arrow["{p'(g_i)}", from=1-1, to=1-3]
\end{tikzcd}\]

and arrows $c_i \xrightarrow{w'_i} c'$ and $c_i \xrightarrow{w_i} c$, arrows $d'_i \xrightarrow{v'_i} d'$ and $d_i \xrightarrow{v_i} d$ giving morphisms between these diagonal objects and the first one, that is, the following diagram commutes:

\[\begin{tikzcd}
	{p'(d'_i)} &&&& {p'(Ad_i)} \\
	&& {c_i} \\
	& {c'} && c \\
	\\
	& {p'(d)} && {p'(Ad)}
	\arrow["{u'_i}"', from=1-1, to=2-3]
	\arrow["{u_i\phi_{d_i}}", from=1-5, to=2-3]
	\arrow["{p'(g_i)}", from=1-1, to=1-5]
	\arrow["{w'_i}", from=2-3, to=3-2]
	\arrow["{w_i}"', from=2-3, to=3-4]
	\arrow["f"', from=3-2, to=3-4]
	\arrow["{u'}"', from=5-2, to=3-2]
	\arrow["{u\phi_d}", from=5-4, to=3-4]
	\arrow["{p'(g)}", from=5-2, to=5-4]
	\arrow["{p'(v'_i)}"', from=1-1, to=5-2]
	\arrow["{p'(v_i)}", from=1-5, to=5-4]
\end{tikzcd}\]

with the $v'_i$ making a covering sieve.

\end{enumerate}

\begin{proof}
The first point is immediately deduced from the fact that in the commutative triangle representing an object of the diagonal category, it is not necessary to mention one of the three arrows, it being the composite of the two others. The second point states that the square on the bottom of the diagram, an object of $(1_{(p' \downarrow {\cal C})}\downarrow A_{\cal C})$, can be covered by objects of the diagonal category, being pictured as the triangle on the top of of the diagram, with the first component of the morphisms between them, the $v_i'$, being covering (a sieve being covering in this category exactly when its projection on the first component is covering in $\cal D'$). 
\end{proof}

\end{prop}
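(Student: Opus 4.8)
The plan is to treat both parts as a matter of unwinding the definitions of the comma category $(1_{(p' \downarrow {\cal C})}\downarrow A_{\cal C})$, of its diagonal subcategory $(1_{(p' \downarrow {\cal C})}\downarrow^{\ast} A_{\cal C})$, and of the topology $\widetilde{K_{A_{\cal C}}}$, and then matching what comes out with the explicit descriptions in the statement.

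For (i), I would first spell out a general object of $(1_{(p' \downarrow {\cal C})}\downarrow^{\ast} A_{\cal C})$: by definition of the comma category it is a triple consisting of an object $(d',c',u')$ of $(p'\downarrow{\cal C})$, an object $(d,c,u)$ of $(p\downarrow{\cal C})$, and a morphism $(g: d'\to Ad,\,f: c'\to c)$ of $(p'\downarrow{\cal C})$ from $(d',c',u')$ to $A_{\cal C}(d,c,u)=(Ad,c,u\phi_d)$; the diagonal constraint forces $c'=c$ and $f=1_c$, and then the commutativity condition built into that morphism reads $u'=(u\circ\phi_d)\circ p'(g)$, so that the datum $u'$ is redundant. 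Assigning to such an object the tuple $(d',d,c,\,g,\,u)$ is therefore a bijection on objects onto the category of the statement, with inverse recovering $u'$ as $(u\circ\phi_d)\circ p'(g)$. I would then observe that a morphism of the diagonal subcategory is a pair consisting of a morphism of $(p'\downarrow{\cal C})$ and a morphism of $(p\downarrow{\cal C})$ whose ${\cal C}$-components necessarily coincide (the comma-square condition forces this, both structure arrows having identity ${\cal C}$-component), and hence, after discarding the redundant $u'$, is exactly the datum of an arrow $d'_1\to d'_2$ together with an arrow $d_1\to d_2$ lying over an arrow $c_1\to c_2$ and compatible with the $g$'s and $u$'s, i.e. a morphism of the category in the statement. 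This gives an isomorphism of categories.

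For (ii), I would recall that, the two consecutive first projections being comorphisms of sites, a sieve in $(1_{(p' \downarrow {\cal C})}\downarrow A_{\cal C})$ is $\widetilde{K_{A_{\cal C}}}$-covering precisely when its image under the composite $(1_{(p' \downarrow {\cal C})}\downarrow A_{\cal C})\xrightarrow{\pi_2}(p'\downarrow{\cal C})\to{\cal D}'$ generates a $K'$-covering sieve in ${\cal D}'$. Since the diagonal subcategory is full, the denseness of the inclusion then amounts to the following: for every object $X$ of $(1_{(p' \downarrow {\cal C})}\downarrow A_{\cal C})$ there is a family of morphisms $\rho_i: Z_i\to X$ with each $Z_i$ lying in the diagonal subcategory whose ${\cal D}'$-components generate a $K'$-covering sieve on the ${\cal D}'$-component of $X$. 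Writing $X$ as the square built from $p'(g): p'(d')\to p'(Ad)$, $u': p'(d')\to c'$, $u\phi_d: p'(Ad)\to c$ and $f: c'\to c$; each $Z_i$ via part (i) as the triangle $(d'_i,d_i,c_i,\,g_i: d'_i\to Ad_i,\,u_i: p(d_i)\to c_i)$; and unravelling each $\rho_i$ into its constituent arrows $v'_i: d'_i\to d'$, $v_i: d_i\to d$, $w'_i: c_i\to c'$, $w_i: c_i\to c$ subject to $w_i=f\circ w'_i$, $u'\circ p'(v'_i)=w'_i\circ u'_i$, $u\circ p(v_i)=w_i\circ u_i$ and $A(v_i)\circ g_i=g\circ v'_i$ — one recovers exactly the big commutative diagram displayed in the statement, while the covering condition on the $\rho_i$ translates into the stated requirement that the $v'_i$ generate a $K'$-covering sieve on $d'$.

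I do not expect a genuine obstacle: the only point requiring care is keeping the variances straight — the comma categories are formed over $(p'\downarrow{\cal C})$ while $A_{\cal C}$ runs from $(p\downarrow{\cal C})$ to $(p'\downarrow{\cal C})$ — and using the identification of part (i) coherently when describing the morphisms $\rho_i$ in part (ii); everything else is a mechanical translation between the abstract definitions and the explicit tuple-and-diagram presentation.
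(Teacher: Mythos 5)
Your proposal is correct and follows essentially the same route as the paper's own (very terse) proof: part (i) is the observation that for a diagonal object the arrow $u'$ is the redundant composite $u\circ\phi_d\circ p'(g)$, and part (ii) is a direct unwinding of the denseness condition together with the fact that a sieve is $\widetilde{K_{A_{\cal C}}}$-covering exactly when its $\cal D'$-projection is $K'$-covering. Your version is simply more explicit about the morphism data and the four compatibility equations (including the forced equality of the $\cal C$-components), which the paper leaves implicit.
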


\bibliography{biblio}{}

\begin{thebibliography}{1}

\bibitem{denseness}
O.~Caramello.
\newblock Denseness conditions, morphisms and equivalences of toposes.
\newblock \emph{arXiv:math.CT/1906.08737v3}, 2020.

\bibitem{fibered}
O.~Caramello.
\newblock Fibred sites and existential toposes.
\newblock \emph{arXiv:math.AG/2212.11693}, 2022.

\bibitem{CaramelloZanfa}
O.~Caramello and R.~Zanfa.
\newblock Relative topos theory via stacks.
\newblock \emph{arXiv:math.CT/2107.04417}, 2021.

\bibitem{jamneshanstructure}
A.~Jamneshan.
\newblock An uncountable {F}urstenberg-{Z}immer structure theory.
\newblock \emph{arXiv:math/2103.17167}, 2021.

\bibitem{elephant}
P.~T. Johnstone.
\newblock {\em Sketches of an elephant: a topos theory compendium. {V}ol. 1 and
  2}, volume 43-44 of {\em Oxford Logic Guides}.
\newblock Oxford University Press, Oxford, 2002.

\bibitem{maclanemoerdijk}
S.~MacLane and I.~Moerdijk.
\newblock {\em Sheaves in Geometry and Logic: An Introduction to Topos Theory}.
\newblock Springer, 1994.

\bibitem{condensed}
P.~Scholze.
\newblock Lectures on condensed mathematics (all results joint with {D}.
  {C}lausen).
\newblock \url{https://www.math.uni-bonn.de/people/scholze/Condensed.pdf},
  2019.

\bibitem{taorelative}
T.~Tao.
\newblock Real analysis relative to a finite measure space.
\newblock
  \url{https://terrytao.wordpress.com/2014/07/15/real-analysis-relative-to-a-finite-measure-space/},
  2014.

\bibitem{tomasicdifference}
I.~Tomasic.
\newblock A topos-theoretic view of difference algebra.
\newblock \emph{arXiv:math/2001.09075}, 2020.

\end{thebibliography}
\bibliographystyle{abbrv}

\vspace{1cm}

\textsc{Léo Bartoli} 

\vspace{0.2cm}
{\small \textsc{Department of Mathematics, ETH Zurich, Rämistrasse 101
8092 Zurich, Switzerland.}\\
\emph{E-mail address:} \texttt{lbartoli@ethz.ch}

\vspace{0.2cm}

{\small \textsc{Istituto Grothendieck ETS, Corso Statuto 24, 12084 Mondovì, Italy.}\\
	\emph{E-mail address:} \texttt{leo.bartoli@ctta.igrothendieck.org}}

\vspace{0.6cm}

\textsc{Olivia Caramello} 

\vspace{0.2cm}
{\small \textsc{Dipartimento di Scienza e Alta Tecnologia, Universit\`a degli Studi dell'Insubria, via Valleggio 11, 22100 Como, Italy.}\\
	\emph{E-mail address:} \texttt{olivia.caramello@uninsubria.it}}

\vspace{0.2cm}

{\small \textsc{Istituto Grothendieck ETS, Corso Statuto 24, 12084 Mondovì, Italy.}\\
	\emph{E-mail address:} \texttt{olivia.caramello@igrothendieck.org}}

\end{document}